\newtheorem{theorem}{Theorem}
\newtheorem{lemma}{Lemma}
\newtheorem{corollary}{Corollary}
\newtheorem{remark}{Remark}
\newcommand{\R}{\mathbb{R}}
\newcommand{\N}{\mathbb{N}}
\newcommand{\E}{\mathbb{E}}
\newcommand{\M}{\mathcal{M}}
\newcommand{\rk}{\text{rk}}
\newcommand{\Poi}{{\rm Poi}}
\newcommand{\Bin}{{\rm Bin}}
\newcommand{\Yone}{H^{(1)}}
\newcommand{\Ytwo}{H^{(2)}}
\newcommand{\hatCone}{\widehat{C}^{(1)}}
\newcommand{\hatLone}{\widehat{L}^{(1)}}
\newcommand{\hatCl}{\widehat{C}^{(\ell)}}
\newcommand{\hatLl}{\widehat{L}^{(\ell)}}
\newcommand{\Mk}{M^{(k)}}
\newcommand{\Pk}{P^{(k)}}
\newcommand{\hatPk}{\widehat P^{(k)}}
\newcommand{\normsquare}[1]{\big\|#1\big\|_{\square}}
\newcommand{\barc}{14}
\newcommand{\FK}{\mathcal{F}_K}
\newcommand{\Supp}{\operatorname{Supp}}
\newcommand{\ju}[1]{\textcolor{blue}{Julien: #1}}
\title{Generalized multi-view model:\\ Adaptive density estimation under low-rank constraints}
\author[1]{Julien Chhor}
\author[2,3]{Olga Klopp}
\author[3]{ Alexandre B. Tsybakov}
\affil[1]{Toulouse School of Economics, France}
\affil[2]{ESSEC Business School, France}
\affil[3]{CREST, ENSAE, Institut Polytechnique de Paris, France}
\begin{document}

\maketitle

\begin{abstract}
We study the problem of bivariate discrete or continuous probability density estimation under low-rank constraints.
For discrete distributions, we assume that the two-dimensional array to estimate is a low-rank probability matrix. 
In the continuous case, we assume that the density with respect to the Lebesgue measure satisfies a generalized multi-view model, meaning that it is $\beta$-H\"older and can be decomposed as a sum of $K$ components, each of which is a product of one-dimensional functions.
In both settings, we propose estimators that achieve, up to logarithmic factors, the minimax optimal convergence rates under such low-rank constraints.
In the discrete case, the proposed estimator is adaptive to the rank $K$. 
In the continuous case, our estimator converges with the $L_1$ rate $\min((K/n)^{\beta/(2\beta+1)}, n^{-\beta/(2\beta+2)})$ up to logarithmic factors, and it is adaptive to the unknown support as well as to the smoothness $\beta$ and to the unknown number of separable components~$K$. 
We present efficient algorithms to compute our estimators.
%

\end{abstract}

\section{Introduction}

Estimating discrete and continuous probability distributions is one of the fundamental problems in statistics and machine learning. 
A classical density estimator for both discrete and continuous data is the histogram, while for continuous densities the most popular method is kernel density estimator (KDE) see, e.g., \cite{Silverman86,DevroyeGyofri, Scott1992Multivariate,klemela2009smoothing,tsybakov2009introduction,Gramacki2017NonparametricKDE,wang2019nonparametric}.
Asymptotically, these estimators can consistently recover any probability density on $\mathbb{R}^m$ in the total variation ($L_1$) distance based on $n$ independent identically distributed (iid) observations, if $nh^{m}\rightarrow \infty$, where $h$ is the tuning parameter (bin width for the histogram in the continuous case and bandwidth for the KDE), and we assume that $h\rightarrow 0$, $n\rightarrow \infty$, see, e.g., \cite {DevroyeGyofri}. On the other hand, smoothness assumptions on the underlying density are not enough to grant good accuracy of these estimators when $m$ is large. Their rate of convergence drastically deteriorates with the dimension even if $h$ is chosen optimally, and it remains true for any estimators under only smoothness assumptions. This gives rise to suggestions of density estimators that overcome the curse of dimensionality under more assumptions on the underlying density than only smoothness. An early suggestion is Projection Pursuit density estimation (PPDE) \cite{FriedmanStuetzle1984projection-pursuit}, which is an iterative algorithm to estimate the density by finding a subspace spanned by a small number of significant components. One may consider PPDE as being adapted to the setting where there exists a linear map that transforms the underlying random vector to a smaller dimensional random vector with independent components. This technique is rather popular but, to the best of our knowledge, theoretical guarantees on the performance of PPDE are not available, see a recent survey on PPDE in \cite{wang2019nonparametric}. A related dimension reduction model for density estimation arises from independent component analysis (ICA), where one assumes the existence of a linear bijection of the underlying random vector to a random vector of the same dimension with independent components \cite{samarov2004nonparametric}. It is shown that under this model there is no curse of dimensionality in the sense that there exist estimators achieving one-dimensional rates \cite{samarov2004nonparametric,samarov2005aggregated,amato2010noisy,v2020structural}. 
Finally, a recent line of work starting from \cite{pmlr-v32-songa14} and further developed in \cite{9779133,9740538, pmlr-v89-kargas19a, 10.5555/2999792.2999972, vandermeulen2021beyond,Vandermeulen2023} deals with the multi-view model for density estimation, that is, a finite mixture model whose components are products of one-dimensional probability densities.  
Densities  $f:\mathbb{R}^m\to \mathbb{R}$ satisfying the multi-view model are the form
\begin{equation}
  \label{basic-multi-view}
f(x)=\sum_{i=1}^K w_i\prod_{j=1}^m f_{ij}(x^T e_j) \quad \text{with} \quad \sum_{i=1}^K w_i=1, w_i\ge 0,  
\end{equation}
  where $e_j$'s are the canonical basis vectors in $\mathbb{R}^m$ and $ f_{ij}$'s are one-dimensional probability densities. Weights $w_i$ and $ f_{ij}$'s are unknown.
The fact that this model is free from the curse of dimensionality when $f_{ij}$'s are Lipschitz continuous and supported of $[0,1]$ is demonstrated in  \cite{vandermeulen2021beyond} through a theoretical analysis of its sample complexity. However, \cite{vandermeulen2021beyond} does not develop computationally tractable estimators. 
\vspace{1mm}
%


 In this paper, we focus on two-dimensional density estimation and consider a new model that generalizes the multi-view model in two aspects. We call it the {\it generalized multi-view model}. First, in contrast to the usual multi-view model, we do not assume the additive components to be products of densities but rather products of arbitrary functions. Second, we do not assume these functions to be Lipschitz continuous. We only need a $\beta$-H\"older continuity for some $\beta\in(0,1]$ of the overall two-dimensional density, which is a sum of $K$ such products. We propose a new estimator that is both computationally tractable and offers improved statistical guarantees
 by achieving the one-dimensional estimation rate to within a logarithmic factor. 
 Our analysis deals with the $L_1$ risk of density estimators. 
As argued in \cite{DevroyeGyofri}, using the $L_1$ norm to characterize the error of density estimation has several advantages. Indeed, the $L_1$ distance between two densities is equivalent, up to a multiplicative constant factor, to the total variation distance between the corresponding probability measures. 
Moreover, the $L_1$ risk for density estimation is transformation invariant, which is not the case for the $L_2$ risk  most often studied in the literature.
From the technical point of view, it is more difficult to deal with the $L_1$ risk than with the $L_2$ risk. 

\vspace{1mm}

Our approach to tackling generalized multi-view models is based on a reduction to the problem of estimating multivariate discrete distributions under a low-rank structure that we also consider in detail.
This problem is of independent interest and it arises in many applications, 
in particular, if the aim is to explore correlations between categorical random variables, which is a particularly relevant subject \cite{johndrow2017tensor,dunson2009nonparametric,diakonikolas2019robust,tahmasebi2018identifiability}.
Without structural assumptions, estimating a discrete distribution on a set of cardinality $D$ in total variation distance 
is associated with the minimax risk of the order of $\sqrt{{D}/{n}}$, where $n$ is the number of observations (see \cite{kamath2015learning,han2015minimax} and Corollary~\ref{cor:minimax rate general discrete} below).
However, by imposing certain structure assumptions, it is possible to reduce the estimation risk. Several assumptions, including monotonicity, unimodality, $t$-modality, convexity, log-concavity or $t$-piecewise degree $k$-polynomial structure have been considered in the literature (see, for example, ~\cite{canonne2018testing,diakonikolas2015differentially,durot2013least}).
In the present paper, we deal with a different setting where a multivariate discrete distribution is estimated under a low-rank structure. 
Specifically, for two integers $d_1, d_2 \geq 2$, we consider the problem of estimating a discrete distribution on a set of cardinality $D=d_1d_2$ defined by a matrix of probabilities $P = (P_{ij})_{i \in [d_1] ,j \in [d_2]}$ with rank at most $K \geq 1$. 
This setting arises, for example, in the analysis of data represented as a table with $n$ rows and 2 columns.
Each row corresponds to one individual, while each column contains information about one feature of the individual, for instance, $(1)$ eye color, and  $(2)$ hair color.
We assume that the value of the cell in the table is a categorical random variable, for example, that hair color can only take $6$ possible values: black, brown, red, blond, gray, white. 
Thus, each element of the $\ell$-th column has a discrete distribution over $\{1,\dots,d_\ell\}$, for some $d_\ell \geq 2$, $\ell=1,2$.
The individuals are assumed to be iid but the columns can be correlated. For instance, hair color can be correlated with eye color.
In other words, each row can be viewed as a realization of a pair of correlated discrete random variables.
If we are interested in possible associations between the two variables, we are lead to estimating their joint distribution. Assuming that $P$ has low rank means that there exists a reduced representation of the correlation structure.
A basic unbiased estimator of $P$ is a histogram $Y/n$ where $Y$ is a $d_1\times d_2$ matrix such that its $(i,j)$-th entry $Y_{ij}$ is the number of individuals whose row in the data table equals $(i,j)$. 
However, the histogram does not take advantage of low-rank structure and it only attains the slow rate $\sqrt{d_1d_2/n}$ in the total variation distance. We will suggest an estimator attaining a faster rate and show that it is minimax optimal up to logarithmic factors.  

\section{Motivation}

In many applications, one needs to explore relations between two objects that
may have a complex structure, yet are linked via a low-dimensional latent space.
This situation can be often described by mixture models and low-rank matrix models. For the problem with discrete distributions,
one of the important examples is given by the {\it probabilistic Latent Semantic Indexing} framework 
for topic models \cite{Hofmann1999}. It assumes that co-occurrences
of words and documents are independent given one of $K$ latent topic classes. Then the
joint probability matrix of words and documents is a mixture of at most $K$ matrices and its rank 
does not exceed $K$, which is typically a small number. Another example of low-rank probability matrix estimation is provided by the Stochastic Block Model \cite{Holland1983StochasticBF,abbe/sbm}. In this case, the problem is to estimate the matrix of connection probabilities of a random graph under the assumption that its nodes fall into $K$ groups with constant connection probabilities within and between each two groups. Such a probability matrix is of rank at most $K$. Low-rank probability matrix estimation problems also arise in the context of  collaborative filtering, matrix completion and other \cite{journals/focm/CandesR09,srebro_collaborative}.

For the problems characterized by continuous probability densities,  multi-view models provide a nonparametric analog of classical mixture models. In contrast to these classical models, they do not assume that the components of the mixture depend on finite number of parameters but rather consider them as functions satisfying some general constraints, such as smoothness or just integrability. In model~\eqref{basic-multi-view}, the resulting function $f$ is the probability density of a random vector \( X = (x_1,\dots, x_m) \in [0,1]^m\) with entries $x_1,\dots, x_m$ that are independent conditional on a latent variable that can take \( K \) distinct values. 
The generalized multi-view model considered in this paper is broader than the basic multi-view model~\eqref{basic-multi-view} as it  allows the functions \( f_{ij} \) to be integrable real-valued functions rather than densities. Also, we only assume H\"older smoothness of $f$ and not of all the components \( f_{ij} \).    Nevertheless, we will demonstrate that estimation over the class of generalized multi-view models is, up to logarithmic factors, not harder than estimation over the subclass described by~\eqref{basic-multi-view}. 
In this paper, we focus on the setting, where the aim is to explore relations between two variables ($m=2$) and we explicitly construct polynomial-time estimators achieving the optimal rates for such models. 

A relevant question is to check whether the multi-view model holds for a given particular problem in practice. 
We address this issue by providing estimators that are adaptive 
to the unknown number of components $K$ varying on a wide scale of values. Very large values of $K$ correspond to the absence of low-rank structure. For such $K$, our estimator achieves the same rate as the usual nonparametric density estimator of a smooth density (with no additional structure), and we show that this is optimal. 
In other words, our adaptive estimator achieves the minimax optimal rate regardless of whether the multi-view model holds or not. 
Thus, adaptation guarantees that checking the low-rank assumption is not necessary in practice.

\section{Summary of contributions and related work}


The importance of low-rank structures in nonparametric density estimation has been discussed in several papers suggesting and analyzing various estimation methods \cite{9779133,9740538, pmlr-v89-kargas19a, pmlr-v32-songa14,  10.5555/2999792.2999972, vandermeulen2021beyond}. In \cite{pmlr-v32-songa14}, the authors introduced the multi-view model and proposed a kernel method for learning it but did not study whether it can lead to an improvement in nonparametric density estimation. 
The paper \cite{10.5555/2999792.2999972} provided empirical evidence that hierarchical low-rank decomposition of kernel embeddings can lead to improved performance in density estimation. 
More recently, \cite{9779133,9740538} used a low-rank characteristic function to improve nonparametric density estimation. Finally, \cite{vandermeulen2021beyond} proved that there exists an estimator that converges at a rate $O\big(n^{-1/3}\big)$ in the $L_1$ norm
to any density satisfying the multi-view model with Lipschitz continuous marginals. This estimator is not constructed explicitly and is not computationally tractable in general. Furthermore, \cite{vandermeulen2021beyond} shows that the standard histogram estimator
can converge at a rate slower than $O\big(n^{-1/m}\big)$ on the same class of densities, where $m$ is the dimension of the data. 

\vspace{1mm}
 
A related problem of estimating a low-rank probability matrix from discrete counts has been considered in~\cite{Jain2020linear}, where the authors propose a polynomial time algorithm (called the curated SVD) for the case of square matrix, $d_1=d_2=d$, and requiring the exact knowledge of the rank $K$. They prove an upper bound on the total variation error of the curated SVD that scales as $\psi(K,d,n):= \sqrt{\frac{Kd}{n}} \land 1$ (to within a logarithmic factor in $K,n$) with probability at least $1-d^{-2}$. They also state a lower bound with the rate $\psi(K,d,n)$ {\it in expectation} by referring to the lower bounds of ~\cite{han2015minimax,kamath2015learning} for general discrete distributions on a set of cardinality $D=Kd$. Based on that, the authors in~\cite{Jain2020linear} claim minimax optimality of the rate $\psi(K,d,n)$, up to logarithmic factors. 
However, the lower bound obtained by this argument only holds under significant restrictions on $K,d,n$ that are not specified
in~\cite{Jain2020linear}. 
Indeed, the lower bounds of~\cite{han2015minimax,kamath2015learning} for general discrete distributions are only meaningful under some specific conditions on $D,n$. For example, the lower bound of \cite[Theorem 1]{han2015minimax} is vacuous for $D\asymp 1$ and the lower bound of \cite[Lemma 8]{kamath2015learning} is vacuous for $D\asymp \sqrt{n}$. These lower bounds are established only in expectation and it is not legitimate to  compare them directly with upper bounds in probability derived in~\cite{Jain2020linear}. The upper bound in probability in~\cite[Theorem 2]{Jain2020linear} can be transformed into a bound in expectation at the expense of adding a $O(d^{-2})$ term to the rate. This imposes one more restriction $d^{-2} \lesssim \psi(K,d,n)$ to match the lower bound up to a logarithmic factor.
The algorithm suggested in~\cite{Jain2020linear} is based on normalization of the probability matrix by rescaling each row and column. 
Similar ideas have been developed in the literature on topic models, where topic matrices are estimated using an SVD-based technique on a rescaled corpus matrix~\cite{ke2022using}.
Our approach is different and based on novel techniques that we call a {\it localized SVD denoising}. 
The localized SVD algorithm that we suggest does not need to rescale the input matrix and relies on splitting the matrix into sub-matrices with entries of similar order of magnitude. 
It may be of interest in other contexts as well.

\vspace{2mm}

The contributions of the present work are as follows. 
\begin{itemize}
    \item  We prove minimax lower bounds in the total variation distance for general discrete distributions on a set of cardinality $D$. We generalize~\cite{han2015minimax,kamath2015learning} in the sense that we derive lower bounds not only in expectation but also in probability and, in contrast to those works, we obtain the lower rate $ \sqrt{\frac{D}{n}} \land 1$ for all $D,n\ge 1$ with no restriction. Next, under the low-rank matrix structure, we prove lower bounds of the order of $\psi(K,d,n)$  both in expectation and in probability, with no restriction on $K,d,n$, where $d=d_1\vee d_2$.
    Moreover, we propose a computationally efficient algorithm to estimate a low-rank probability matrix  $P$ and show that it attains the same rate $\psi(K,d,n)$ up to a logarithmic factor. Thus, we prove the minimax optimality of this rate and of our algorithm, up to a logarithmic factor. Unlike the curated SVD of~\cite{Jain2020linear}, our algorithm applies to non-square matrices and it is adaptive to the unknown rank $K$. 
\item We propose a method of estimating $\beta$-H\"older densities for $\beta\in(0,1]$ under the generalized multi-view model.  
Our algorithm achieves the rate of convergence $\left (K/n\right)^{\beta/(2\beta+1)} \wedge n^{-\beta/(2\beta+2)}$ up to a logarithmic factor on the class of densities that are (i) $\beta$-H\"older over an {\it unknown} sub-rectangle of $[0,1]^2$ and (ii)~represented as a sum of $K$ separable components. In the two-dimensional case that we consider, we improve upon the prior work~\cite{vandermeulen2021beyond} in the following aspects:
\begin{itemize}
 \item Our estimator is computationally tractable.
\item The study in \cite{vandermeulen2021beyond} was devoted to the case $\beta=1$ and the standard multi-view model while we provide an extension to the generalized multi-view model described above and to any $\beta\in(0,1]$.
   \item We prove a lower bound showing that the above convergence rate is minimax optimal up to a logarithmic factor on the class of densities satisfying the generalized multi-view model. We establish the explicit dependence of the minimax rate on $K,n,\beta$ revealing, in particular, that it exhibits an elbow at $K\asymp n^{1/(2\beta+2)}$. 
   We note that our lower bound is stronger since we prove it for the smaller class $\mathcal{G}_{K,\beta}^\circ$ of densities  satisfying the standard multi-view model \eqref{basic-multi-view} with $m=2$, where $f_{ij}$'s are probability densities on $[0,1]$ that are $\beta$-Hölder on their support.
   \item We propose an estimator that is adaptive to the unknown number of separable components $K$, to the unknown smoothness $\beta$, and to the unknown support of the density. 
   As shown by our lower bound, this estimator also reaches the minimax optimal convergence rate, up to a logarithmic factor, on the class $\mathcal{G}_{K,\beta}^\circ$. 
   It can therefore be employed for learning mixture models from the class $\mathcal{G}_{K,\beta}^\circ$ while guaranteeing robustness to the model misspecification since it attains a comparable rate over the substantially larger class of generalized multi-view models.
   
\end{itemize}
\item  We provide a package for computation avalable at  \texttt{https://github.com/hi-paris/Lowrankdensity}.
We run a numerical experiment demonstrating the efficiency of our estimators both in discrete and continuous settings.
\end{itemize}

\section{Notation}

For two real numbers $x,y$, we define $x\land y := \min(x,y)$ and $x \lor y := \max(x,y)$. 
For $d \in \N$, we set $[d] = \{1, \dots, d\}$. 
For any probability vector or probability matrix  $P$ and for any $n \in \N^*$, we denote by $\M(P,n)$ the multinomial distribution with probability parameter $P$ and sample size $n$. 

For any matrix $\Lambda$, we denote by $\Lambda_{ij}$ its $(i,j)$th entry and by $\rk(\Lambda)$ its rank. 
We denote by $\Lambda_{IJ} = (\Lambda_{ij})_{i\in I, j \in J}$ an extraction of matrix $\Lambda\in \mathbb{R}^{d_1\times d_2}$ corresponding to the sets of indices $I \subseteq [d_1]$ and  $J \subseteq [d_2]$.
We will use several matrix norms, namely, the operator norm denoted by $\|\Lambda\|$, the nuclear norm $\|\Lambda\|_*$, the Frobenius norm $\|\Lambda\|_F$, the entry-wise $\ell_1$-norm $\|\Lambda\|_{1}$, and the norms
\begin{align*}
    &\|\Lambda\|_{1,\infty} = \max_{j\in[d_2]} \sum_{i=1}^{d_1} |\Lambda_{ij}|, 
    \\
    & \normsquare{\Lambda} = \|\Lambda\|_{1,\infty} \lor \|\Lambda^\top\|_{1,\infty}.
\end{align*}
The notation $\|\cdot\|_{L_1}$ is used for the norms in $L_1([0,1], \rm{Leb})$ and in $L_1([0,1]^2, \rm{Leb})$, where $\rm{Leb}$ denotes the Lebesgue measure.

We denote by $\|\cdot\|$ the Euclidean norm in $\R^2$, by $\Supp (f) = f^{-1}\left(\R\setminus\{0\}\right)$ the support of a real-valued function $f$,  by $\mathbbm{1}_A(\cdot)$ the indicator function of set $A$, and by $|A|$ the cardinality of finite set $A$.
Throughout the paper, the absolute positive constants are denoted by $C$ and may take different values on each appearance and we assume, unless otherwise stated, that $n, d_1, d_2 \geq 2$.


\section{Discrete distributions}
\label{sec:discrete}

Consider first the setting with discrete distributions, which provides a base for studying continuous distributions in Section~\ref{sec:densities}.

For integer $K \geq 1$, let $\mathcal{T}_K$ be the class of all probability matrices of rank at most $K$: 
\begin{equation}\label{def_DK}
    \mathcal{T}_K = \bigg\{P \in \mathbb{R}^{d_1 \!\times\! d_2} ~ \Big| ~ \text{rk}(P) \!\leq\! K, ~ \sum\limits_{(i,j) \in [d_1] \!\times [d_2\!]} \hspace{-5mm} P_{ij} = 1 ~ \text{ and } P_{ij} \geq 0,  \ \forall (i,j) \in [d_1] \!\times\! [d_2]  \bigg\}.
\end{equation}
Assume that for some unknown $P \in \mathcal{T}_K$ we are given iid observations $X_1, \dots, X_n$ with distribution $P$, that is,  $\mathbb{P}(X_k = (i,j)) = P_{ij}$ for all $k \in [n],  (i,j)\in [d_1] \times [d_2]$. 

In this section, we consider the problem of minimax estimation of $P$ on the class $\mathcal{T}_K$ with respect to the norm $\|\cdot\|_1$.
We derive the minimax optimal rate and propose a computationally efficient estimator achieving this rate, up to a logarithmic factor.

\subsection{The localized SVD estimator}\label{subsec:discrete_Estimator}

We start by formally describing the localized SVD algorithm. In the next subsection, we will provide an intuition behind its construction and sketch the ideas of proving the upper bounds on its performance.
\vspace{2mm}

Without loss of generality, assume that the total number of observations is even and equal to $2n$.
We use sample splitting to define $H^{(1)}\in \mathbb{R}^{d_1\times d_2}$ and  $H^{(2)}\in \mathbb{R}^{d_1\times d_2}$ as the 
matrices of empirical frequencies (the histograms) corresponding to the sub-samples
$(X_1,\dots, X_n)$ and $(X_{n+1},\dots, X_{2n})$ respectively. 
In what follows, it will useful to express the matrices $P$, $H^{(1)}$ and $H^{(2)}$ in terms of their columns and rows:
\begin{align*}
    P = \Big[C_1,\dots, C_{d_2}\Big] = \left[\begin{array}{ccc}
     \!\!L_1\!\! \\
     \!\!\vdots\!\!\\
     \!\!L_{d_1}\!\!
\end{array}\right] ~~ \text{ and } ~~ H^{(\ell)} = \Big[\hatCl_1,\dots, \hatCl_{d_2}\Big] = \left[\begin{array}{ccc}
     \!\!\hatLl_1\!\! \\
     \!\!\vdots\!\!\\
     \!\!\hatLl_{d_1}\!\!
\end{array}\right], \text{ for } \ell=1,2.
\end{align*}
We set $T = \lfloor \log_2 d \rfloor -1$,  where $d\ge 2$ is a suitably chosen parameter. In this section, we take $d = d_1 \lor d_2$. Other choices of $d$ will be used when applying Algorithm~\ref{alg:Alg} as a building block for estimation of continuous densities.
For any $t \in \{0,\dots, T\}$ we define 
\begin{align}
    I_t = \bigg\{i \in [d_1]: \big\|\hatLone_i\big\|_1 \in \left(\frac{1}{2^{t+1}}, \frac{1}{2^t}\right]\bigg\}, \qquad
    J_t = \bigg\{j \in [d_2]: \big\|\hatCone_j\big\|_1 \in \left(\frac{1}{2^{t+1}}, \frac{1}{2^t}\right]\bigg\}  \label{def_I}
\end{align}
and set
\begin{align}
    I_{T+1} = \bigg\{i \in [d_1]: \big\|\hatLone_i\big\|_1 \leq \frac{1}{2^{T+1}}\bigg\}, \qquad
    J_{T+1} = \bigg\{j \in [d_2]: \big\|\hatCone_j\big\|_1 \leq \frac{1}{2^{T+1}}\bigg\}. \label{def_J_infty}
\end{align}
Next, for any $k = (t, t') \in \{0,\dots, T\!+\!1\}^2$ we define 
\begin{align}
    &U_k = I_t \times J_{t'}, \label{def_U}\\
    &\Mk = \Big(H^{(2)}_{ij}\,\mathbbm{1}_{\left\{(i,j) \in U_k\right\}}\Big)_{i,j},\label{eq:def_Mk}\qquad
    \Pk = \Big(P_{ij}\,\mathbbm{1}_{\left\{(i,j) \in U_k\right\}}\Big)_{i,j}. 
\end{align}

\begin{algorithm}[H]\label{alg:Alg}
\SetAlgoLined
\textbf{Input:} $\alpha>1$, $d\ge 2$, $N>1$, integer $n\ge 1$, matrices $H^{(1)}, H^{(2)}$.\\

\textbf{Output:} Estimator $\widehat P^*$ of $P$. 

\vspace{3mm}

\textbf{If } $n < \barc \alpha d \log N$ \, \textbf{return} $\widehat P^*=\frac{1}{2}(H^{(1)} + H^{(2)})$ 

\vspace{2mm}

\textbf{Else:} $T\leftarrow \lfloor \log_2 (d)\rfloor -1$

\vspace{2mm}

\hspace{5mm} {\textbf{For} $t,t'=0, \dots, T+1$:}
\begin{enumerate}
    \item $k \leftarrow (t,t')$; $\tau_k \leftarrow 12\sqrt{\alpha \frac{\log N}{n}2^{-t\land t'}}$ \textbf{and define $\Mk$ as in~\eqref{eq:def_Mk}}
    \item  $\hatPk \leftarrow \underset{A \in \R^{d_1\times d_2}}{\rm argmin} \left(\|\Mk - A\|_F^2 + \tau_k \|A\|_*\right)$
    \item $\widehat P \leftarrow \sum_{k \in \{0,\dots, T\!+\!1\}^2} \hatPk$
    \item $\widehat P_+ \leftarrow \big(\widehat P_{ij} \lor 0\big)_{ij} $
    \item \textbf{If} $\widehat P_+ = 0_{\,\R^{d_1\times d_2}}$ \textbf{return} $\widehat P^*=\frac{1}{2}(H^{(1)} + H^{(2)})$
\end{enumerate}
\textbf{Return} $\widehat P^*=\frac{\widehat P_+}{\|\widehat P_+\|_1}$.
\caption{Estimation procedure}
\end{algorithm}
\noindent
In what follows, ${\rm Alg1}(\alpha,d,N,n,H^{(1)},H^{(2)})$ stands for Algorithm \ref{alg:Alg} with input parameters $(\alpha,d,N,n,H^{(1)},H^{(2)})$.

\subsection{Intuition underlying the localized SVD algorithm }

Standard approaches to estimate a low-rank matrix from noisy data consist in using methods based on global SVD on the underlying matrices. 
The main drawback of such methods is that they can be sub-optimal under non-isotropic noise. In particular, it is the case for the multinomial noise that we are dealing with in our setting since $nH^{(\ell)}\sim \mathcal{M}(P,n)$ for $\ell=1,2$.  Any entry $Y_{ij}$ of a multinomial matrix $Y\sim \mathcal{M}(P,n)$ has a binomial distribution, $Y_{ij} \sim \operatorname{Bin}(n,P_{ij})$, with variance $nP_{ij}(1-P_{ij})$ that varies across the indices $(i,j)$. 
To overcome this difficulty, Algorithm \ref{alg:Alg} splits the multinomial matrix into a logarithmic number of sub-matrices, on which the multinomial noise can be more carefully controlled. Then, each sub-matrix is de-noised separately using a nuclear norm penalized estimator. Recall that this estimator is based on soft thresholding of the singular values. 
\vspace{1mm}

To appreciate why do we split the multinomial matrix as in equations~\eqref{def_I} - \eqref{def_J_infty}, assume that $Y\sim \mathcal{M}(P,n)$ and for any two subsets $I \subset [d_1]$ and $J \subset [d_2]$, consider the extractions according to $I$ and $J$: $$Y_{IJ} = (Y_{ij})_{(i,j)\in I\times J}, \quad P_{IJ} = (P_{ij})_{(i,j)\in I\times J}, \quad \text{ and } 
 \quad W_{IJ} = \frac{Y_{IJ}}{n} - P_{IJ}.$$ 
By Lemma~\ref{prop:mult_noise_2} the operator norm of $W_{IJ}$ is controlled, with high probability and ignoring the logarithmic factors and smaller order terms, by a function of column-wise and row-wise sums of entries of $P_{IJ}$:
\begin{align}\label{eq:WIJ}
    \|W_{IJ}\|^2 &\lesssim \frac{1}{n}\normsquare{P_{IJ}} = \frac{1}{n} \max\Big(\max_{i \in I} \sum_{j\in J} P_{ij},~ \max_{j \in J} \sum_{i\in I} P_{ij}\Big)\\ \nonumber
    & \leq \frac{1}{n} \max\Big(\max_{i \in I} p_i,~ \max_{j \in J} q_j\Big),
\end{align}
where $p_i = \sum\limits_{j=1}^{d_2} P_{ij}$ and $q_j = \sum\limits_{i=1}^{d_1} P_{ij}$ are the marginal probabilities. 
This bound is accurate enough if we take ``balanced" subsets $I, J$ such that 
\begin{align*}
    \forall i \in I: ~ p_i \asymp \max_{i' \in I} p_{i'}
    \quad\quad \text{ and } \quad \quad
    \forall j \in J: ~ q_j \asymp \max_{j' \in J} q_{j'},
\end{align*}
that is, we split the multinomial matrix according to similar values of $p_i$'s and $q_j$'s in order for the multinomial noise to be almost isotropic over the considered sub-matrices.

Since the values $(p_1,\dots, p_{d_1})$ and $(q_1,\dots, q_{d_2})$ are unknown we use sample splitting to obtain good enough estimators $(\hat p_1,\dots, \hat p_{d_1})$ and $(\hat q_1,\dots, \hat q_{d_2})$ from the first half of the data.
To simplify the argument, here we do not discuss these pilot estimators and assume that an oracle gives us the exact values $(p_1,\dots, p_{d_1})$ and $(q_1,\dots, q_{d_2})$.

Set $d = d_1 \lor d_2$, and for any $t \in \{0,\dots, \lfloor\log_2(d)-1\rfloor\}$, define the following index sets
\begin{equation}\label{def:index_sets}
    \widetilde I_t = \left\{i: p_i \in \left(\frac{1}{2^{t+1}}, \frac{1}{2^t}\right]\right\}, \quad \widetilde J_{t} = \left\{j: q_j \in \left(\frac{1}{2^{t+1}}, \frac{1}{2^{t}}\right]\right\}.
\end{equation}
For $T = \lfloor\log_2(d)\rfloor-1$ define 
$$\text{$\widetilde I_T = [d_1] \setminus \bigcup\limits_{t < T} \widetilde I_{t}$ and $\widetilde J_T = [d_2] \setminus \bigcup\limits_{t < T} \widetilde J_{t}$.}$$
Fix $t,t' \in \{0,\dots, T\}$ and set $k = (t,t')$,  $\widetilde M_k = \frac{1}{n} Y_{\widetilde I_t  \widetilde J_{t'}}$, $P_k = P_{\widetilde I_t  \widetilde J_{t'}}$. 
Then $\rk (P_k) \leq K$ and $\normsquare{P_k} \leq \frac{1}{2^{t\land t'}}$ by construction. 
The idea is now to take an estimator $\widehat P_k$ of $P_k$ obtained from $\widetilde M_k$ by performing a soft-thresholding of its singular values, with a threshold based on Lemma~\ref{prop:mult_noise_2}, cf.~\eqref{eq:WIJ}. 
This strategy and standard guarantees for SVD soft-thresholding (nuclear norm penalized) estimators lead to  the following bound on the error in the Frobenius norm, which holds with high probability:
\begin{align*}
    \left\| \widehat P_k-P_k\right\|_F^2 \lesssim \frac{K}{n} \normsquare{P_k} \lesssim \frac{K}{ 2^{t \land t'} n}. 
\end{align*}
Here and below, the sign $\lesssim $ hides a logarithmic factor. This implies the following bound on the $\ell_1$-error over the cell~$k$:
\begin{align}\label{eq:intuit}
    \left\| \widehat P_k-P_k\right\|_1 \lesssim \sqrt{\frac{K}{2^{t \land t'} n} |\widetilde I_t|\,|\widetilde J_{t'}|}.
\end{align}
Denoting by $\widehat P$ the $d_1\times d_2$ matrix obtained by concatenation of all the cells, and summing \eqref{eq:intuit} over $k$ we obtain 
\begin{align*}
    \|\widehat P - P\|_1 &\lesssim \sum_{t,t' = 0}^{T} \sqrt{\frac{K}{2^{t \land t'} n} |\widetilde I_t|\,|\widetilde J_{t'}|}. 
\end{align*}
By the Cauchy-Schwarz inequality and the fact that $T=\lfloor\log_2(d)\rfloor-1$ this bound can be simplified as follows:
\begin{align*}
   \sum_{t,t' = 0}^{T} \sqrt{\frac{K}{2^{t \land t'} n} |\widetilde I_t|\,|\widetilde J_{t'}|} & \le \sqrt{\sum_{t,t' = 0}^{T} \frac{K}{2^{t \land t'} n} |\widetilde I_t||\widetilde J_{t'}| \sum_{t,t'=0}^{T} 1}  \\
    & \le \log_2(d) \,\sqrt{\sum_{t,t' = 0}^{T} \frac{K}{n}\left(\frac{1}{2^{t}} + \frac{1}{2^{t'}} \right) |\widetilde I_t|\,|\widetilde J_{t'}|}  \\
    & \lesssim  \sqrt{\frac{K (d_1+d_2)}{n}},
\end{align*}
where the last inequality uses the relations 
$$\sum\limits_{t=0}^{T} |\widetilde I_t| = d_1 \quad  \text{ and } \quad \sum\limits_{t=0}^{T} 2^{-t} |\widetilde I_t| = \sum\limits_{t=0}^T \sum\limits_{i \in \widetilde{I}_t} 2^{-t} \leq \sum\limits_{t=0}^T \sum\limits_{i \in \widetilde{I}_t} 2p_i \leq 2 $$ (by the definition of $\widetilde I_t$), and the analogous relations for the family $(\widetilde J_t)_t$.

The above argument outlines a strategy for proving a bound of order $\sqrt{\frac{Kd}{n}}$ (up to a logarithmic factor) for the estimator defined by Algorithm \ref{alg:Alg}. The exact statement of the result is given in Theorems \ref{prop:UB} and \ref{th:upper discrete in expectation} below. 

\subsection{Results for discrete distributions}

We first provide minimax optimal rates for estimating a general discrete distribution on a finite set of size $D$. Without loss of generality, assume that this set is $\{1,\dots,D\}$.
Let $\Delta_D = \left\{p \in \R_+^D : \sum_{j=1}^D p_j = 1\right\}$ be the set of all probability distributions on $\{1,\dots,D\}$. 
We denote by $\mathbb{P}_p$  the probability measure of $n$ iid observations drawn from $p$, by $\mathbb{E}_p$ the  expectation with respect to $\mathbb{P}_p$, and by $\inf_{\widehat p}$ the infimum over all $\R^D$-valued estimators.

\begin{theorem}\label{th:LB_general}
Let $D\ge 1, n\ge 1$. 
There exist two absolute positive constants $c, c'$ such that
\begin{equation}\label{eq:th:LB_general1} \inf_{\widehat p} \sup_{p \in \Delta_D}  \mathbb{P}_p\left(\|\widehat p - p\|_1 \geq c \Big\{\sqrt{\frac{D}{n}}\land 1\Big\}\right) \geq c',
\end{equation}
and
\begin{equation}\label{eq:th:LB_general2}
\inf_{\widehat p} \sup_{p \in \Delta_D}  \mathbb{E}_p \|\widehat p - p\|_1 \geq c \Big\{\sqrt{\frac{D}{n}}\land 1\Big\}.    
\end{equation}
\end{theorem}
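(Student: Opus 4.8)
The plan is to prove the in-probability bound \eqref{eq:th:LB_general1} directly, by an Assouad-type construction analyzed coordinate by coordinate, and then to deduce the in-expectation bound \eqref{eq:th:LB_general2} from it via the elementary inequality $\mathbb{E}X\ge t\,\mathbb{P}(X\ge t)$. We may restrict to $D\ge 2$ (for $D=1$ the set $\Delta_1$ is a singleton). Set $m=\lfloor D/2\rfloor\ge 1$, and for a parameter $\delta\in(0,1]$ to be chosen, consider the family indexed by $\omega=(\omega_1,\dots,\omega_m)\in\{-1,1\}^m$ defined by
\[
(p_\omega)_{2i-1}=\frac{1+\omega_i\delta}{2m},\qquad (p_\omega)_{2i}=\frac{1-\omega_i\delta}{2m}\quad (i\in[m]),
\]
with $(p_\omega)_D=0$ if $D$ is odd; every $p_\omega$ lies in $\Delta_D$. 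Two ingredients would be recorded. First, a \emph{separation/rounding} fact: given any $\widehat p\in\R^D$, set $\widehat\omega_i=+1$ if $\widehat p_{2i-1}\ge\widehat p_{2i}$ and $\widehat\omega_i=-1$ otherwise; if $\widehat\omega_i\ne\omega_i$ then $|\widehat p_{2i-1}-(p_\omega)_{2i-1}|+|\widehat p_{2i}-(p_\omega)_{2i}|\ge \delta/m$, hence $\|\widehat p-p_\omega\|_1\ge \tfrac{\delta}{m}N$ with $N=\#\{i:\widehat\omega_i\ne\omega_i\}$. Second, a \emph{distinguishability} bound: if $\omega,\omega'$ differ only in block $i$, then the Kullback--Leibler divergence between the corresponding $n$-fold product laws equals $n\tfrac{\delta}{m}\log\tfrac{1+\delta}{1-\delta}$, which is at most $\tfrac{4n\delta^2}{m}$ provided $\delta\le\tfrac12$ (using $\log\tfrac{1+\delta}{1-\delta}\le 4\delta$ on $[0,\tfrac12]$).

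Next I would set $\delta=\min\!\big(\tfrac12,\,c_0\sqrt{m/n}\big)$ for a small absolute constant $c_0$, and run the per-coordinate Assouad step. Fix $i$, put the uniform prior on $\omega$, and condition on $\omega_{-i}$: recovering $\omega_i$ from the data is then a plain two-point problem between the laws $\mathbb{P}_{p_{(\omega_{-i},+1)}}$ and $\mathbb{P}_{p_{(\omega_{-i},-1)}}$, whose total-variation distance is at most $(2n\delta^2/m)^{1/2}=\delta\sqrt{2n/m}$ by Pinsker's inequality and the previous bound. By the choice of $\delta$ this is $\le\tfrac12$ in \emph{both} regimes $n\gtrsim m$ and $n\lesssim m$ (in the truncated branch $\delta=\tfrac12$ one has $\delta\sqrt{2n/m}\le c_0\sqrt2\le\tfrac12$, and likewise in the other branch). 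Hence the rounded estimator satisfies $\mathbb{E}_\omega\,\mathbb{P}_{p_\omega}(\widehat\omega_i\ne\omega_i)\ge\tfrac14$ for every $i$, so summing over $i$ gives $\mathbb{E}\,N\ge m/4$ for a uniformly random $\omega$. Since $N\le m$, a reverse Markov inequality yields $\mathbb{P}(N\ge m/8)\ge\tfrac18$, and on that event $\|\widehat p-p_\omega\|_1\ge\delta/8$ by the rounding fact. Replacing the average over $\omega$ by a maximum, $\inf_{\widehat p}\sup_{p\in\Delta_D}\mathbb{P}_p(\|\widehat p-p\|_1\ge\delta/8)\ge\tfrac18$; since $m\asymp D$ we have $\delta/8\asymp\sqrt{D/n}\land1$, which is \eqref{eq:th:LB_general1}. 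Finally, \eqref{eq:th:LB_general2} follows at once from $\mathbb{E}_p\|\widehat p-p\|_1\ge\tfrac{\delta}{8}\,\mathbb{P}_p(\|\widehat p-p\|_1\ge\tfrac{\delta}{8})$ after taking suprema over $p$ and the infimum over $\widehat p$.

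The only genuinely delicate point, and what I expect to be the main obstacle, is the constant bookkeeping: one must choose $c_0$ and organize the truncation $\delta=\min(\tfrac12,c_0\sqrt{m/n})$ so that simultaneously $\delta\le\tfrac12$ (needed for the clean KL bound) and $\delta\sqrt{2n/m}\le\tfrac12$ (needed for the per-coordinate Bayes error $\ge\tfrac14$) hold in \emph{both} ranges of $n$ relative to $m$. This is precisely what removes any restriction on $D,n$ and produces the rate $\sqrt{D/n}\land 1$ valid for all $D,n\ge 1$, rather than a version valid only in a sub-range, which is the improvement over the earlier references. A secondary routine point is that the argument remains valid for small $m$: for $m=1$ it collapses to a plain two-point Le Cam bound, and since $N$ is integer-valued the reverse-Markov step still goes through.
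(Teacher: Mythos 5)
Your proof is correct, but it is organized in the opposite direction from the paper's. The paper does not prove Theorem~\ref{th:LB_general} standalone: it obtains it as the special case $K=d_1=1$, $d_2=D$ of the low-rank lower bound (Theorem~\ref{prop:LB}), whose proof first establishes the \emph{expectation} bound \eqref{eq:th:LB_general2} by Assouad's lemma in the packaged form of Theorem~2.12(iv) of Tsybakov (with a $\chi^2$-divergence computation on a paired perturbation family around the uniform distribution, essentially identical to your $p_\omega$), and then upgrades it to the \emph{probability} bound \eqref{eq:th:LB_general1} via a dedicated reduction (Lemma~\ref{lem:reduction of lower bound}), which converts an expectation lower bound into a probability lower bound using the bounded diameter of the hypothesis class around a center $U$ together with a truncation of the estimator. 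You instead prove \eqref{eq:th:LB_general1} directly — per-coordinate two-point testing with Pinsker, followed by a reverse Markov inequality on the Hamming mismatch count $N$ — and then get \eqref{eq:th:LB_general2} for free from $\mathbb{E}X\ge t\,\mathbb{P}(X\ge t)$. Both routes are sound and both remove the restrictions on $D,n$ present in the earlier references by the same truncation $\delta=\min(\tfrac12,c_0\sqrt{m/n})$ (the paper's $\gamma=\tfrac{1}{4\sqrt{nD}}\land\tfrac{1}{2D}$ plays exactly this role). Your direction is more self-contained, since the expectation bound follows trivially from the probability bound, whereas the paper's direction requires the nontrivial Lemma~\ref{lem:reduction of lower bound}; the paper's choice pays off elsewhere, because the same lemma is reused verbatim to get the in-probability versions of Theorem~\ref{prop:LB} and of the continuous lower bound, and because citing Tsybakov's Theorem~2.12(iv) avoids redoing the reverse-Markov bookkeeping. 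One shared blemish: as stated with $D\ge 1$, the claim is vacuous (indeed false) for $D=1$ since $\Delta_1$ is a singleton; both you and the paper effectively dismiss this degenerate case.
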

The lower bound for the expectation \eqref{eq:th:LB_general2} improves upon the bounds in~\cite{han2015minimax,kamath2015learning} that provide the same lower rate $\sqrt{\frac{D}{n}}$ under some additional conditions on $D,n$. 
The lower bound in probability \eqref{eq:th:LB_general1} is new. 

As a corollary of Theorem \ref{th:LB_general}, we get the minimax optimal rate of convergence for the problem of estimating general discrete distributions.  Given $\delta>0$ and a class of discrete distributions $\mathcal{P}\subseteq \Delta_D$, we define the {\it minimax in probability rate} of estimation of $p\in \mathcal{P}$ based on an iid sample $(X_1,\dots,X_n)$ from $p$ as 
\begin{equation*}
    \psi_\delta^*(n,\mathcal{P}) = \inf \bigg\{\psi>0 ~\Big|~ \inf_{\widehat p}\, \sup_{p \in \mathcal{P}} \mathbb{P}_p\Big( \big\|\widehat{p}(X_1,\dots,X_n)-p\big\|_1 > \psi\Big) \leq \delta\bigg\}.
\end{equation*}
%
%
\begin{corollary}\label{cor:minimax rate general discrete} Let $D\ge 1, n\ge 1$. There exist two absolute positive constants $c,c'$ such that, for all $\delta\in (0,c')$ we have
\begin{equation}\label{eq:cor:general discr1}
c \Big\{\sqrt{\frac{D}{n}}\land 1\Big\} \le \psi_\delta^*(n,\Delta_D) \le c_\delta  \Big\{\sqrt{\frac{D}{n}}\land 1\Big\},  
\end{equation}
where $c_\delta>0$ depends only on $\delta$. Furthermore,
\begin{equation}\label{eq:cor:general discr2}
\inf_{\widehat p} \sup_{p \in \Delta_D}  \mathbb{E}_p \|\widehat p - p\|_1 \asymp \sqrt{\frac{D}{n}} \land 1. 
\end{equation}
\end{corollary}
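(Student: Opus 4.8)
The plan is to derive the corollary directly from Theorem~\ref{th:LB_general}, which already supplies the lower bounds, and to obtain the matching upper bounds from the trivial histogram estimator. First I would address the lower bounds. For \eqref{eq:cor:general discr1}, fix $\delta \in (0,c')$ where $c'$ is the constant from Theorem~\ref{th:LB_general}. By \eqref{eq:th:LB_general1}, for \emph{every} estimator $\widehat p$ there exists $p \in \Delta_D$ with $\mathbb{P}_p(\|\widehat p - p\|_1 \geq c\{\sqrt{D/n}\land 1\}) \geq c' > \delta$; hence $\psi := c\{\sqrt{D/n}\land 1\}$ fails the defining condition of $\psi_\delta^*(n,\Delta_D)$, and therefore $\psi_\delta^*(n,\Delta_D) \geq c\{\sqrt{D/n}\land 1\}$, giving the left inequality. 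For \eqref{eq:cor:general discr2}, the lower bound part is exactly \eqref{eq:th:LB_general2}.

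Next I would establish the upper bounds via the histogram $\widehat p = Y/n$, where $Y \sim \mathcal{M}(p,n)$, i.e.\ $Y_j$ counts the observations equal to $j$. The key estimate is $\mathbb{E}_p\|\widehat p - p\|_1 \leq \sqrt{D/n} \land 1$ up to an absolute constant: indeed $\mathbb{E}_p|Y_j/n - p_j| \leq \sqrt{\operatorname{Var}(Y_j/n)} = \sqrt{p_j(1-p_j)/n} \leq \sqrt{p_j/n}$, so by Cauchy--Schwarz $\mathbb{E}_p\|\widehat p - p\|_1 \leq \sum_{j=1}^D \sqrt{p_j/n} \leq \sqrt{(D/n)\sum_j p_j} = \sqrt{D/n}$; and trivially $\|\widehat p - p\|_1 \leq 2$ always, which handles the regime $D \geq n$ and yields the $\land 1$ truncation (adjusting the absolute constant). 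This proves the upper bound in \eqref{eq:cor:general discr2}. To pass from expectation to the in-probability statement \eqref{eq:cor:general discr1}, I would apply Markov's inequality: $\mathbb{P}_p(\|\widehat p - p\|_1 > \psi) \leq \mathbb{E}_p\|\widehat p - p\|_1 / \psi \leq C\{\sqrt{D/n}\land 1\}/\psi$, which is $\leq \delta$ as soon as $\psi \geq (C/\delta)\{\sqrt{D/n}\land 1\}$; taking $c_\delta = C/\delta$ gives the right inequality of \eqref{eq:cor:general discr1}, uniformly over $p \in \Delta_D$.

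The only mild subtlety — and the step I would be most careful about — is the bookkeeping of the truncation by $1$ and the interaction between the two absolute constants in the statement (the same $c$ appears in both the lower bound of \eqref{eq:cor:general discr1} and in \eqref{eq:cor:general discr2}). One should choose the final $c$ as the minimum of the constant inherited from Theorem~\ref{th:LB_general} and a constant small enough to sit below the histogram's risk; since all bounds are up to absolute constants this is immediate, but it is worth stating explicitly so that a single pair $(c,c')$ serves both displays. No genuine obstacle arises here: the corollary is essentially a repackaging of Theorem~\ref{th:LB_general} together with the elementary variance bound for the empirical distribution, and the in-probability upper bound is a one-line Markov argument.
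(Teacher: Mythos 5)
Your proposal is correct and follows essentially the same route as the paper, which derives the lower bounds directly from Theorem~\ref{th:LB_general} and the upper bounds from the empirical frequency estimator. The only cosmetic difference is that the paper invokes its concentration bound (Lemma~\ref{prop:samp_comp_TV_discrete}, via the bounded-difference inequality, giving $c_\delta \asymp \sqrt{\log(1/\delta)}$) for the in-probability upper bound, whereas you use Markov's inequality (giving $c_\delta \asymp 1/\delta$); both suffice since the statement only requires $c_\delta$ to depend on $\delta$.
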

The proof of Corollary \ref{cor:minimax rate general discrete} follows immediately by combining Theorem \ref{th:LB_general} with the standard upper bound for the empirical frequency estimator
(see, for example, Lemma \ref{prop:samp_comp_TV_discrete} below).

Next, we obtain a lower bound for the class of discrete distributions $\mathcal{T}_K$ defined by probability matrices of rank at most $K$, see \eqref{def_DK}. 

\begin{theorem}[Lower bounds for $\mathcal{T}_K$]\label{prop:LB}
Let $n,K,d_1,d_2$ be positive integers. Set $d=d_1\vee d_2$. 
There exist two absolute positive constants $c, c'$ such that
\begin{equation}\label{eq:th:LBdiscr1} \inf_{\widetilde P} \sup_{P \in \mathcal{T}_K} \mathbb{P}_P\left(\|\widetilde P - P\|_1 \geq c \Big\{\sqrt{\frac{Kd}{n}}\land 1\Big\}\right) \geq c',
\end{equation}
and
\begin{equation}\label{eq:th:LBdiscr2}
\inf_{\widetilde P} \sup_{P \in \mathcal{T}_K} \mathbb{E}_P \|\widetilde P - P\|_1 \geq c \Big\{\sqrt{\frac{Kd}{n}}\land 1\Big\}.    
\end{equation}
\end{theorem}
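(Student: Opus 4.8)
## Proof proposal for Theorem~\ref{prop:LB}

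The plan is to reduce the lower bound for $\mathcal{T}_K$ to the already-established general lower bound of Theorem~\ref{th:LB_general} by embedding a hard subfamily of discrete distributions on a set of cardinality $D \asymp Kd$ into the class of low-rank probability matrices. The key observation is that a $d_1 \times d_2$ matrix whose nonzero entries are confined to a block-diagonal pattern of $K$ blocks, each block being essentially a rank-one (or even a single-row/single-column) piece, automatically has rank at most $K$; so by filling such a pattern with an arbitrary probability vector on $\asymp Kd$ coordinates, we obtain a subfamily of $\mathcal{T}_K$ in bijection with $\Delta_D$ for $D \asymp Kd$.

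Concretely, first I would handle the regime $Kd \le n$ (otherwise the bound $\sqrt{Kd/n}\wedge 1 = 1$ is trivial since $\|\widetilde P - P\|_1$ can be forced to be of constant order by a two-point argument, or one truncates $K,d$ appropriately). Assume without loss of generality $d = d_1 \ge d_2$, and set $m = \min(K, d_2)$. Partition $[d_1]$ into $m$ consecutive blocks $B_1,\dots,B_m$ each of size $\asymp d_1/m$, and associate block $B_r$ with column $c_r \in [d_2]$ (choosing the $c_r$ distinct). Define the subfamily $\mathcal{P}_0 \subseteq \mathcal{T}_K$ consisting of all matrices $P$ supported on $\bigcup_r (B_r \times \{c_r\})$, i.e. $P_{ij} = 0$ unless $i \in B_r$ and $j = c_r$ for some $r$. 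Each such $P$ has at most $m \le K$ nonzero columns, hence $\rk(P) \le K$, and the nonzero entries range freely over a probability simplex of dimension $D := \sum_r |B_r| \asymp d_1 \asymp Kd$ when $K \le d_2$, and $D \asymp d_2 \cdot (d_1/d_2) = d_1$ when $K > d_2$ — in all cases $D \asymp (K \wedge d_2) \cdot (d_1/(K\wedge d_2)) \asymp d$; one should be a little careful here and instead spread the support over $m \asymp K\wedge d_2$ columns with $\asymp d$ rows each only when this is $\le d_1 d_2$, giving $D \asymp Kd$ provided $K \le d_1 \wedge d_2$, and $D \asymp d_1 d_2 \asymp d^2 \gtrsim Kd$ when $K$ is large — so in every case $D \gtrsim Kd \wedge (d_1 d_2)$, which is the correct target since $\sqrt{Kd/n}\wedge 1$ and $\sqrt{(Kd\wedge d_1d_2)/n}\wedge 1$ agree up to constants. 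The $\ell_1$-distance between matrices in $\mathcal{P}_0$ equals the $\ell_1$-distance between the corresponding probability vectors in $\Delta_D$, and the induced sampling model is exactly the multinomial model on $D$ categories. Therefore Theorem~\ref{th:LB_general} applied with this $D$ yields both \eqref{eq:th:LBdiscr1} and \eqref{eq:th:LBdiscr2}, after noting that an estimator $\widetilde P$ valued in $\R^{d_1\times d_2}$ can be projected onto the coordinates in the support of $\mathcal{P}_0$ without increasing the $\ell_1$ error by more than a constant factor (indeed projection onto a coordinate subspace is $1$-Lipschitz in $\ell_1$, and the projected vector lies in $\R^D$), so the infimum over $\widetilde P$ is bounded below by the infimum over $\R^D$-valued estimators.

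The main obstacle I anticipate is not conceptual but bookkeeping: getting the block sizes and the number of blocks right so that $D$ comes out $\asymp Kd$ (and not, say, $\asymp K d_2$ or $\asymp d_1$) in every regime of $K,d_1,d_2$, and simultaneously ensuring $D \le d_1 d_2$ so the support actually fits, while keeping the rank bounded by $K$. The clean way to organize this is to set $m = \min(K, d_1, d_2)$, give each of the $m$ blocks $\lfloor d/m \rfloor \wedge (d_1 d_2 / m)$-many support cells arranged so that each block occupies a distinct set of columns (or rows) — rank $\le m \le K$ — and then check that the resulting $D$ satisfies $D \ge c\,(Kd \wedge d_1 d_2)$ for an absolute constant $c$. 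Since $\sqrt{x}\wedge 1$ is monotone and $Kd \wedge d_1 d_2 \asymp Kd \wedge d_1 d_2$ while the claimed rate is $\sqrt{Kd/n}\wedge 1$, one last elementary remark is needed: when $Kd > d_1 d_2$ we have $K > d_2$ (as $d = d_1$), so $\mathcal{T}_K$ contains \emph{all} probability matrices, and $\sqrt{Kd/n}\wedge 1 \asymp \sqrt{d_1 d_2/n}\wedge 1$ up to a constant because $Kd \le d \cdot d = d_1^2 \le d_1 d_2 \cdot (d_1/d_2)$ — this edge case is absorbed by allowing the absolute constant $c$ in the statement to be small, or simply by invoking Theorem~\ref{th:LB_general} with $D = d_1 d_2$ and noting $\sqrt{d_1d_2/n}\wedge 1 \ge c\sqrt{Kd/n}\wedge 1$ fails in general — so in fact one must restrict to $K \le d_1 \wedge d_2$ for the clean rate, and for $K$ larger the rate $\sqrt{Kd/n}$ should be read as capped, which is consistent with the $\wedge 1$ in the statement only when $Kd \gtrsim n$; I would add a short remark clarifying that for $K \ge d_1\wedge d_2$ the bound follows a fortiori from the $K = d_1 \wedge d_2$ case since $\mathcal{T}_K \supseteq \mathcal{T}_{d_1\wedge d_2}$ and $\sqrt{Kd/n}\wedge 1 \le \sqrt{d_1d_2/n}\wedge 1$.
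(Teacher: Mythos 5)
Your reduction idea --- embed $\Delta_D$ with $D = Kd$ into $\mathcal{T}_K$ by restricting the support to a set on which low rank is automatic --- is sound, and in its clean form it is essentially the same hard family the paper uses. But as written the proposal has a serious structural problem: it is circular relative to the paper. Theorem~\ref{th:LB_general} is not proved independently there; it is \emph{deduced from} Theorem~\ref{prop:LB} by taking $K=d_1=1$ and $d_2=D$. Invoking Theorem~\ref{th:LB_general} as ``already established'' therefore leaves the actual hardness argument unsupplied. The paper's proof works directly on the low-rank class: it builds a cube of $2^{KD_2}$ hypotheses ($D_2=\lfloor d_2/2\rfloor$, WLOG $d_2\ge d_1$) supported on the first $K$ rows, with entries $\frac{1}{D}\pm\epsilon_{ij}\gamma$ arranged in cancelling pairs so that each hypothesis is a probability matrix of rank at most $K$; it controls the pairwise $\chi^2$-divergences, applies Assouad's lemma to get the bound in expectation, and then converts it into a bound in probability via Lemma~\ref{lem:reduction of lower bound}. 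You would need to carry out an equivalent perturbation construction (for $\Delta_D$, say) to make your reduction non-vacuous; once that is done, your organization and the paper's are interchangeable in content.

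Second, your first construction does not produce $D\asymp Kd$. If the blocks $B_1,\dots,B_m$ partition $[d_1]$ and each block occupies a single column, then $\sum_r|B_r|=d_1$, so $D=d$ and the factor $K$ is lost; the assertion ``$D:=\sum_r|B_r|\asymp d_1\asymp Kd$'' is false unless $K\asymp 1$. The correct embedding, which you reach only after considerable back-and-forth, is simply: with $d=d_1\ge d_2$ and $K\le d_2$, take the support to be $K$ entire columns, $S=[d_1]\times\{c_1,\dots,c_K\}$. Any probability matrix supported on $S$ has rank at most $K$, $|S|=Kd$ exactly, restriction to $S$ is a contraction in $\ell_1$ (so the infimum over matrix estimators dominates the infimum over $\R^{Kd}$-valued estimators), and the observations are iid from the corresponding element of $\Delta_{Kd}$; both \eqref{eq:th:LBdiscr1} and \eqref{eq:th:LBdiscr2} would then follow from Theorem~\ref{th:LB_general}, including the $\land 1$ cap, with no separate treatment of the regime $Kd>n$. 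Your closing observation that the rate $\sqrt{Kd/n}\land 1$ can only be certified for $K\le d_1\land d_2$ is correct, and the same restriction is implicitly assumed in the paper's own proof (the perturbation there requires $K\le d_1$).
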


The next two theorems give upper bounds matching the lower rate of Theorem \ref{prop:LB} up to a logarithmic factor.

\begin{theorem}[Upper bound for $\mathcal{T}_K$ in probability] \label{prop:UB}
Let $\alpha >1$, $d=d_1\vee d_2$, $N\ge 2$, and let the estimator $\widehat P^*$ be obtained by {\rm Alg\ref{alg:Alg}}($\alpha, d,  N, n , H^{(1)}, H^{(2)}$). Then there exist constants $C_0, C_1>0$ depending only on $\alpha$ such that
$$ 
\sup_{P \in \mathcal{T}_K} \mathbb{P}_P\left(\|\widehat P^* - P\|_1 > C_1 \bigg\{\sqrt{\frac{Kd}{n}} \log (d) \, \log^{1/2}(N)\land 1 \bigg\}\right) \le C_0 (\log d)^2 d N^{-\alpha}.
$$
\end{theorem}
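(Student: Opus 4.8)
The plan is to follow the intuition sketch from the preceding subsection, but carry out each step rigorously, tracking the failure probabilities carefully so that the final union bound produces the $C_0(\log d)^2 d N^{-\alpha}$ term. First I would dispose of the trivial case: if $n < \barc\alpha d\log N$, then $\widehat P^* = \frac12(H^{(1)}+H^{(2)})$ is the empirical frequency estimator on a set of cardinality $d_1 d_2 \le d^2$; but actually in this regime $\sqrt{Kd/n} \gtrsim 1$ up to the logarithmic factors, so the truncation at $1$ in the rate makes the bound $\|\widehat P^* - P\|_1 \le 2$ automatic, and the claimed inequality holds deterministically with room to spare (using $\|\widehat P^*-P\|_1\le 2$ always). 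So the substance is the case $n \ge \barc\alpha d\log N$.

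In that case I would proceed cell by cell. Step one: control the pilot estimators. The sets $I_t, J_{t'}$ are defined via the empirical row/column masses $\|\hatLone_i\|_1, \|\hatCone_j\|_1$ from the first half-sample, whereas the ``ideal'' sets $\widetilde I_t, \widetilde J_{t'}$ use the true marginals $p_i, q_j$. Using a Bernstein/Chernoff bound for each $\hat p_i = \|\hatLone_i\|_1 \sim \mathrm{Bin}(n,p_i)/n$, I would show that on an event $\mathcal{E}_1$ of probability at least $1 - C d N^{-\alpha}$, every $\hat p_i$ is within a constant factor of $p_i$ (once $p_i \gtrsim \log N / n$; smaller $p_i$ land in the $I_{T+1}$ bucket anyway), and similarly for the $\hat q_j$. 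On $\mathcal{E}_1$ the realized buckets $I_t$ differ from $\widetilde I_t$ only by shifting indices to a neighbouring bucket, so one still has $\normsquare{\Pk} \lesssim 2^{-(t\wedge t')}$ (up to an absolute constant) and $\rk(\Pk) \le K$ for each $k=(t,t')$. Step two: for each fixed cell $k$, apply Lemma~\ref{prop:mult_noise_2} to the extracted noise matrix $W = M^{(k)} - P^{(k)}$ (here $nM^{(k)}$ restricted to $U_k$ is multinomial-type from the second half-sample), obtaining $\|W\| \le \tau_k/2 = 6\sqrt{\alpha 2^{-(t\wedge t')}\log N/n}$ on an event $\mathcal{E}_2^{(k)}$ of probability $\ge 1 - C N^{-\alpha}$ — here the choice of $d$ inside $T=\lfloor\log_2 d\rfloor-1$ and the form of $\tau_k$ must be matched to the dimensions $|I_t|\vee|J_{t'}|$ that appear in the lemma, which is why $d = d_1\vee d_2$ is the right normalization. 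Step three: invoke the standard oracle inequality for the nuclear-norm penalized least-squares estimator $\hatPk$ with regularization $\tau_k$: on $\mathcal{E}_2^{(k)}$, where $\tau_k \ge 2\|W\|$, one gets $\|\hatPk - \Pk\|_F^2 \lesssim \tau_k^2\, \rk(\Pk) \lesssim K 2^{-(t\wedge t')}\log N / n$.

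Step four: convert Frobenius to $\ell_1$ via Cauchy–Schwarz on the cell support, $\|\hatPk-\Pk\|_1 \le \sqrt{|I_t||J_{t'}|}\,\|\hatPk-\Pk\|_F \lesssim \sqrt{K 2^{-(t\wedge t')}|I_t||J_{t'}|\log N/n}$, then sum over $k\in\{0,\dots,T+1\}^2$ exactly as in the intuition subsection: Cauchy–Schwarz over the $(T+2)^2 \le C(\log d)^2$ cells, the elementary bound $2^{-(t\wedge t')} \le 2^{-t}+2^{-t'}$, and the marginal identities $\sum_t |I_t| \le d_1$ wait — more precisely $\sum_t 2^{-t}|I_t| \le 2\sum_i p_i \le 2$ and $\sum_t|J_{t'}|\le d_2$ (and symmetrically), giving $\sum_k \|\hatPk-\Pk\|_1 \lesssim \log(d)\sqrt{K(d_1+d_2)\log N/n} \lesssim \sqrt{Kd/n}\,\log(d)\log^{1/2}(N)$. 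Since $\sum_k \Pk = P$ and $\sum_k \hatPk = \widehat P$, this controls $\|\widehat P - P\|_1$. Step five: account for the post-processing $\widehat P \mapsto \widehat P_+ \mapsto \widehat P^* = \widehat P_+/\|\widehat P_+\|_1$: truncating negatives only decreases $\ell_1$ distance to the nonnegative matrix $P$, and renormalizing a nonnegative matrix to a probability matrix at most doubles the $\ell_1$ error (standard: $\|\widehat P_+/\|\widehat P_+\|_1 - P\|_1 \le 2\|\widehat P_+ - P\|_1$ since $\big|\|\widehat P_+\|_1 - 1\big| = \big|\|\widehat P_+\|_1 - \|P\|_1\big| \le \|\widehat P_+ - P\|_1$). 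The fallback branch (5) in the algorithm only triggers when $\widehat P_+ = 0$, which forces $\|\widehat P - P\|_1 \ge \|P\|_1 = 1 \ge$ the target rate minus constants — actually it's cleaner to note it cannot occur on the good event once the bound above is $<1$. Finally, union bound: $\mathbb{P}(\mathcal{E}_1^c) + \sum_k \mathbb{P}((\mathcal{E}_2^{(k)})^c) \le C dN^{-\alpha} + C(\log d)^2 N^{-\alpha} \le C_0 (\log d)^2 d N^{-\alpha}$, and on the complement the deterministic bound $\|\widehat P^*-P\|_1\le 2$ combined with the min-with-$1$ truncation in the rate handles everything when the rate exceeds a constant.

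The main obstacle is Step one — showing the data-driven buckets $I_t, J_{t'}$ are a good enough surrogate for the oracle buckets $\widetilde I_t, \widetilde J_{t'}$ uniformly over all $d_1+d_2$ rows and columns, with the right failure probability, and in particular handling rows/columns whose true mass sits near a dyadic boundary $2^{-t}$ (which may get misclassified into an adjacent bucket) without losing the $\normsquare{\Pk}\lesssim 2^{-(t\wedge t')}$ control that feeds the threshold $\tau_k$. This is where one must be careful that a constant-factor perturbation of the bucket masses only inflates constants, not the rate, and that the rank bound $\rk(\Pk)\le K$ survives arbitrary index extractions (which it does, since a submatrix of a rank-$\le K$ matrix has rank $\le K$). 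A secondary technical point is correctly matching the noise Lemma~\ref{prop:mult_noise_2} to the second-sample histogram restricted to $U_k$: the entries outside $U_k$ are zeroed, so one is really bounding $\|(W_{ij}\mathbbm{1}_{U_k})_{ij}\|$, and one should check the lemma applies to this masked multinomial noise with the relevant $\normsquare{\cdot}$ being that of $P^{(k)}$, which by construction is $\lesssim 2^{-(t\wedge t')}$ — exactly the quantity under the square root in $\tau_k$.
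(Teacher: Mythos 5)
Your proposal is correct and follows essentially the same route as the paper's proof: the one-sided comparison of empirical to true row/column masses (the paper's event $\mathcal{A}$, via Lemma~\ref{lem_concent_bin}), the operator-norm control of the masked multinomial noise via Lemma~\ref{prop:mult_noise_2}, the nuclear-norm oracle inequality per cell, Cauchy--Schwarz to pass to $\ell_1$, and the factor-$2$ post-processing argument. The only cosmetic difference is in the final summation over cells: you use the intuition-section counting ($2^{-(t\wedge t')}\le 2^{-t}+2^{-t'}$ together with $\sum_t 2^{-t}|I_t|\lesssim 1$), while the paper bounds $|I_t|\le 2^{t+1}\wedge d$ and sums $2^{(t\vee t')/2}$ directly; both yield the same $\sqrt{d}\log d$ factor.
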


Note that Theorem~\ref{prop:UB} can be used for several meaningful choices of $N$, such as $N=n$, $N=d$ or $N=d\vee n$.

\begin{theorem}[Upper bound for $\mathcal{T}_K$ in expectation] \label{th:upper discrete in expectation}
Let $\alpha >3/2$, $d=d_1\vee d_2$, and let the estimator $\widehat P^*$ be obtained by {\rm Alg\ref{alg:Alg}}($\alpha, d, d\vee n, H^{(1)}, H^{(2)}$). Then there exists a constant $C>0$ depending only on $\alpha$ such that
$$
\sup_{P \in \mathcal{T}_K} \mathbb{E}_P \|\widehat P^* - P\|_1 \le  C\Big\{\sqrt{\frac{Kd}{n}}(\log n)^{3/2}\land 1\Big\}.
$$
\end{theorem}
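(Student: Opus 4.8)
The plan is to deduce the in-expectation bound from the in-probability bound of Theorem~\ref{prop:UB} by a standard tail-integration argument, taking advantage of the fact that the output of Algorithm~\ref{alg:Alg} is always a probability matrix.

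First I would note that, whichever of the three branches of Algorithm~\ref{alg:Alg} is executed, the estimator $\widehat P^*$ is a probability matrix (it equals either $\tfrac12(H^{(1)}+H^{(2)})$ or $\widehat P_+/\|\widehat P_+\|_1$), so $\|\widehat P^* - P\|_1 \le \|\widehat P^*\|_1 + \|P\|_1 = 2$ almost surely. Invoking Theorem~\ref{prop:UB} with $N = d\vee n$ and writing $r := C_1\bigl\{\sqrt{Kd/n}\,\log(d)\,\log^{1/2}(d\vee n)\land 1\bigr\}$, I get, uniformly over $P\in\mathcal{T}_K$,
\begin{align*}
\mathbb{E}_P\|\widehat P^* - P\|_1 \;\le\; r + 2\,\mathbb{P}_P\bigl(\|\widehat P^* - P\|_1 > r\bigr) \;\le\; r + 2C_0(\log d)^2 d\,(d\vee n)^{-\alpha}.
\end{align*}
It then remains to check that each term on the right is at most a constant (depending only on $\alpha$) times $\sqrt{Kd/n}(\log n)^{3/2}\land 1$.

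For the first term I would distinguish $d\le n$ from $d>n$. If $d\le n$, then $\log(d)\log^{1/2}(d\vee n) = \log(d)\log^{1/2}(n)\le(\log n)^{3/2}$, so $r\le C_1\{\sqrt{Kd/n}(\log n)^{3/2}\land 1\}$ immediately. If $d>n$, then $Kd/n>1$ forces $\sqrt{Kd/n}>1$, hence $r = C_1$; on the other hand $\sqrt{Kd/n}(\log n)^{3/2}\land 1\ge(\log 2)^{3/2}$, an absolute positive constant, so $r$ is dominated by a constant multiple of the target. For the second term I distinguish the same two cases: if $d>n$ then $(\log d)^2 d(d\vee n)^{-\alpha} = (\log d)^2 d^{1-\alpha}$, which is bounded over $d\ge 2$ since $\alpha>1$, and is hence controlled via the same lower bound $(\log 2)^{3/2}$ on the target; if $d\le n$ then $(\log d)^2 d(d\vee n)^{-\alpha}\le(\log n)^2 n^{1-\alpha}$, and since $Kd\ge 2$ the target is at least $\sqrt{2/n}(\log n)^{3/2}\land 1$, so the relevant ratio is bounded by $(\log n)^{1/2}n^{3/2-\alpha}$ (when the target lies below $1$) or by $(\log n)^2 n^{1-\alpha}$ (when it equals $1$), each of which stays bounded because $\alpha>3/2$.

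The only delicate bookkeeping is in the regime where $Kd\asymp n$ is small, so that the operative rate is $\sqrt{Kd/n}\asymp n^{-1/2}$ rather than the trivial $1$: there the failure-probability contribution is of order $(\log n)^2 n^{1-\alpha}$ and must be absorbed into $n^{-1/2}(\log n)^{3/2}$, which is precisely why the hypothesis $\alpha>1$ of Theorem~\ref{prop:UB} is tightened to $\alpha>3/2$ in the present statement. By contrast the regime $d>n$ costs essentially nothing, because the estimator being a probability matrix pins the target rate to an absolute constant from below.
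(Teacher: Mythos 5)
Your proposal is correct and follows essentially the same route as the paper: invoke the in-probability bound with $N=d\vee n$, use $\|\widehat P^*-P\|_1\le 2$ to pass to expectation, and absorb the failure-probability term $(\log d)^2 d(d\vee n)^{-\alpha}$ into the rate using $\alpha>3/2$. Your case analysis ($d\le n$ versus $d>n$, and target below versus at the cap $1$) is in fact more explicit than the paper's, which compresses these steps into the single bound $(\log(d\vee n))^2 n^{1-\alpha}\lesssim \sqrt{Kd/n}\,\log^{3/2}(d\vee n)$.
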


Theorem \ref{prop:LB}, Theorem \ref{prop:UB} with $N=d$ and Theorem \ref{th:upper discrete in expectation} lead to the following corollary, which provides the minimax rate for the class  $\mathcal{T}_K$. 
\begin{corollary}\label{th:main_th}
Let $\gamma > 0$ and $\delta = O(d^{-\gamma})$. There exist two constants $c,C>0$ depending only on $\gamma$ such that 
\begin{equation}\label{eq:minimax in probability} 
c\bigg\{\sqrt{\frac{Kd}{n}} \land 1\bigg\} \leq \psi_\delta^*(n,\mathcal{T}_K) \leq C \bigg\{\sqrt{\frac{Kd}{n}} (\log d)^{3/2} \land 1 \bigg\}
\end{equation}
and 
\begin{equation}\label{eq:minimax in expectation}
c\bigg\{\sqrt{\frac{Kd}{n}} \land 1\bigg\} \leq  \inf_{\widetilde P} \sup_{P \in \mathcal{T}_K} \mathbb{E}_P \|\widetilde P - P\|_1 \leq C \bigg\{\sqrt{\frac{Kd}{n}} (\log n)^{3/2} \land 1 \bigg\}.    
\end{equation}
\end{corollary}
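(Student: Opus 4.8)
The plan is to assemble the corollary from three results already stated: Theorem~\ref{prop:LB} supplies both lower bounds, Theorem~\ref{th:upper discrete in expectation} supplies the upper bound in expectation, and Theorem~\ref{prop:UB} (with $N=d$ and a suitable $\alpha$) supplies the upper bound in probability. No new estimator or probabilistic argument is needed; the only work is to pick the free parameters so that the tail probability in Theorem~\ref{prop:UB} is absorbed into $\delta$, and to dispose of the degenerate regimes.

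\emph{Lower bounds.} Inequality \eqref{eq:th:LBdiscr2} is exactly the left-hand inequality of \eqref{eq:minimax in expectation}. For \eqref{eq:minimax in probability}, \eqref{eq:th:LBdiscr1} states that every estimator $\widetilde P$ satisfies $\sup_{P\in\mathcal{T}_K}\mathbb{P}_P(\|\widetilde P-P\|_1\ge c\{\sqrt{Kd/n}\wedge 1\})\ge c'$. Since $\delta=O(d^{-\gamma})$, we have $\delta<c'$ for every $d$ above an absolute threshold; then for any $\psi<c\{\sqrt{Kd/n}\wedge 1\}$ the event $\{\|\widetilde P-P\|_1>\psi\}$ contains $\{\|\widetilde P-P\|_1\ge c\{\sqrt{Kd/n}\wedge 1\}\}$, so $\sup_P\mathbb{P}_P(\|\widetilde P-P\|_1>\psi)\ge c'>\delta$, which forces $\psi_\delta^*(n,\mathcal{T}_K)\ge c\{\sqrt{Kd/n}\wedge 1\}$. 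For the finitely many remaining (bounded) values of $d$, the rate $\sqrt{Kd/n}\wedge 1$ and the minimax rate for a distribution on a set of bounded size coincide up to an absolute constant, which is absorbed into $c$.

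\emph{Upper bounds.} For the expectation bound, apply Theorem~\ref{th:upper discrete in expectation} with any fixed $\alpha>3/2$: the corresponding estimator $\widehat P^*$ satisfies $\sup_{P\in\mathcal{T}_K}\mathbb{E}_P\|\widehat P^*-P\|_1\le C\{\sqrt{Kd/n}(\log n)^{3/2}\wedge 1\}$, which is the right-hand inequality of \eqref{eq:minimax in expectation}. For the probability bound, run Theorem~\ref{prop:UB} with $N=d$: the rate there becomes $C_1\{\sqrt{Kd/n}\,\log(d)\log^{1/2}(d)\wedge 1\}=C_1\{\sqrt{Kd/n}(\log d)^{3/2}\wedge 1\}$ and the failure probability is $C_0(\log d)^2 d\cdot d^{-\alpha}=C_0(\log d)^2 d^{1-\alpha}$. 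Choosing $\alpha=\alpha(\gamma)>1$ large enough that $C_0(\log d)^2 d^{1-\alpha}\le\delta$ for every admissible $\delta=O(d^{-\gamma})$ and every $d\ge 2$ --- possible because $\log\log d/\log d$ is bounded on $d\ge2$ --- the constant $C_1$ then depends only on $\alpha$, hence only on $\gamma$, and we conclude $\psi_\delta^*(n,\mathcal{T}_K)\le C\{\sqrt{Kd/n}(\log d)^{3/2}\wedge 1\}$.

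\emph{Main point of care.} The only genuinely delicate step is the interface between the in-probability statement of Theorem~\ref{prop:UB} and the definition of $\psi_\delta^*$: one must verify that the chosen $\alpha$ makes the polynomial tail $C_0(\log d)^2 d^{1-\alpha}$ uniformly no larger than the prescribed $\delta$, and that the resulting constants do not secretly depend on $n$, $d$, or $K$. The saturated regime $Kd\gtrsim n$, in which all stated rates are $\asymp 1$, is immediate from the truncation ``$\wedge 1$'' together with the trivial bound $\|\widehat P^*-P\|_1\le\|\widehat P^*\|_1+\|P\|_1=2$.
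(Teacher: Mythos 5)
Your proposal is correct and takes essentially the same route as the paper, which states the corollary as an immediate consequence of Theorem~\ref{prop:LB}, Theorem~\ref{prop:UB} with $N=d$, and Theorem~\ref{th:upper discrete in expectation} without further argument. The extra bookkeeping you supply (choosing $\alpha=\alpha(\gamma)$ so that the tail $C_0(\log d)^2 d^{1-\alpha}$ is dominated by $\delta$, and converting the in-probability lower bound of \eqref{eq:th:LBdiscr1} into a bound on $\psi_\delta^*$) is exactly the implicit content of the paper's one-line derivation.
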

If $K$ is substantially smaller than $d_1 \land d_2$ the rate of convergence $\sqrt{\frac{Kd}{n}}$ provided by Theorems \ref{prop:UB} and \ref{th:upper discrete in expectation}   is much faster than the  estimation rate $\sqrt{\frac{D}{n}}$ for general $D=d_1d_2$-dimensional discrete distributions. This characterizes the gain that is achieved due to the low-rank structure.

\section{Continuous distributions}\label{sec:densities}

In this section, we use the ideas developed for discrete distributions in Section~\ref{sec:discrete} to derive estimators of probability densities under the {\it generalized multi-view model}. 

We start by defining the class of considered densities. 
Let $L>0$ be a constant. For any $\beta\in(0,1]$, we say that  $f: [0,1]^2 \to \R$ is a $\beta$-H\"older function if $|f(z)-f(z')| \le L\|z-z'\|_\infty^{\beta}$ for all $z,z'\in \Supp(f)$. We denote by $\mathcal{L}_{\beta}$ the set of all $\beta$-H\"older densities supported on rectangles contained in  $[0,1]^2$: 
\begin{align}
    \mathcal{L}_{\beta} = \left\{f: [0,1]^2 \to \R \,\bigg|\, \exists \, r_1, r_2, R_1, R_2 \in [0,1] \text{ s.t.} \begin{cases} 
    \Supp(f) = [r_1,R_1] \times [r_2,R_2],\\
    f \text{ is $\beta$-H\"older on } \Supp(f),\\
    \int_{\Supp(f)} f = 1 \text{ and } f \geq 0.    \end{cases}\!\right\}\label{eq:def_L}
\end{align}
For integer $K\geq 1$, we define $\FK$ as the set of functions on $[0,1]^2$ that are sums of $K$ separable functions:
\begin{align*}
    \FK = \bigg\{(x,y)\in [0,1]^2 \longmapsto \sum_{k=1}^K u_k(x)v_k(y) \in \R ~\Big|~  u_k,v_k \in L_1[0,1], \ \forall k \in [K] \bigg\}.
\end{align*}
We consider the following set of $\beta$-H\"older probability densities 
\begin{align*}
    \mathcal{G}_{K,\beta} := \mathcal{L}_{\beta} \cap \FK.
\end{align*}
If $f \in \mathcal{G}_{K,\beta}$ we will say that $f$ follows the {\it generalized multi-view model}.
We emphasize that any function $f \in \mathcal{G}_{K,\beta}$ is only assumed to be $\beta$-H\"older on an unknown rectangle of the form $[r_1,R_1]\times [r_2,R_2]$ and not necessarily over the whole domain $[0,1]^2$. In particular, $f \in \mathcal{G}_{K,\beta}$ can have jumps at the boundary of its support $[r_1,R_1]\times [r_2,R_2]$.
Moreover, the functions $u_k$ and $v_k$ appearing in the decomposition $f(x,y) = 
\sum_{k=1}^K u_k(x) v_k(y)$ can take negative values, they need not be $\beta$-H\"older or   continuous. 
Clearly, the set $\mathcal{G}_{K,\beta}$ contains all densities that are $\beta$-H\"older on their support and can be expressed as mixtures of product densities. 

Since we consider density estimation under the $L_1$ risk it is not restrictive to assume that the support of the density is a compact set. 
Indeed, it has been highlighted in \cite{ibragimov1984more}, \cite{juditsky2004minimax}, \cite{goldenshluger2014adaptive}, \cite{chhor2021goodness} that there exist no uniformly consistent estimators with respect to the $L_1$ norm on classes of H\"older continuous densities with unbounded support. On such classes, the minimax $L_1$ rate is  of trivial order $1$ regardless of the number of observations. 
Note also that in our setting the support of $f$ is an {\it unknown} set. It allows us to handle densities that are not necessarily H\"older continuous on the whole domain $[0,1]^2$. To the best of our knowledge, this setting was not explored in the prior work.

Assume that for some integer $K \ge 1$ and some unknown $f \in \mathcal{G}_{K,\beta}$, we are given $n$ iid observations $X_1,\dots, X_n$ distributed with probability density $f$.
The goal is to estimate $f$ based on $X_1,\dots, X_n$. The minimax rate for estimation of $\beta$-H\"older densities on $[0,1]^2$ under the $L_1$ risk is known to be $n^{-\beta/(2\beta+2)}$ and this rate is attained by KDE and other basic density estimators \cite{DevroyeGyofri,DevroyeLugosi}.  
We show that the minimax rate of convergence for the class $\mathcal{G}_{K,\beta}$ with rank $K$ structure is faster than $n^{-\beta/(2\beta+2)}$ and we propose a computationally simple estimator that achieves this optimal rate up to a logarithmic factor. 

\vspace{2mm}

Our estimator is defined in Algorithm~\ref{alg:Alg_densities}. The main steps of Algorithm~\ref{alg:Alg_densities} can be summarized as follows. 
\begin{itemize}\vspace{-1mm}
\item We divide the data in two subsamples of equal size. We estimate the support $[r_1,R_1] \times [r_2,R_2]$ by the smallest rectangle of the form $[\widehat r_1, \widehat R_1]\times [\widehat r_2, \widehat R_2]$ containing all of the data points in the first subsample. 
\item We partition the estimated support $[\widehat r_1, \widehat R_1]\times [\widehat r_2, \widehat R_2]$ into disjoint rectangular cells with edges of approximately the same length $h^*$ in both dimensions. Dividing the second subsample in two equal parts, we construct two independent histogram matrices $N$ and $N'$ based on this partition. 
\item We apply Algorithm~\ref{alg:Alg} with $H^{(1)}=N$ and $H^{(2)}=N'$, which outputs a matrix with entries corresponding to the cells of the partition. We define our density estimator as a function that takes a constant value in each cell, proportional to the output of Algorithm~\ref{alg:Alg} in the cell.
\end{itemize}

Note also that we need to modify this scheme in some degenerate situations.  Indeed, the partition of the rectangle $[\widehat r_1, \widehat R_1]\times [\widehat r_2, \widehat R_2]$ degenerates if either $\widehat R_1 - \widehat r_1$ or $\widehat R_2 - \widehat r_2$ is smaller than the size of the cell $h^*$. 
If, for example, $\widehat R_1 - \widehat r_1$ is too small then, due to the $\beta$-H\"older property, $f$ does not vary much in the first coordinate direction and we are lead to a one-dimensional density estimation problem over $[\widehat r_2, \widehat R_2]$, for which we use Algorithm~\ref{alg:Alg_densities_1D}.

In Algorithm~\ref{alg:Alg_densities} we assume without loss of generality that $n$ is a multiple of $4$. For $X\in \R^2$, we denote by $\Pi_1(X)$ and $\Pi_2(X)$ its first and second coordinates, respectively: $X=(\Pi_1(X),\Pi_2(X))$. We denote by \text{Alg\ref{alg:Alg_densities_1D}}$(Z_1,\dots, Z_n,K')$ the output of Algorithm \ref{alg:Alg_densities_1D} with input $(Z_1,\dots, Z_n,K')$.


\begin{algorithm}\label{alg:Alg_densities}
\SetAlgoLined

\textbf{Input:} $X_1,\dots, X_n \in \R^2$ with $n=4k$ for an integer $k \ge 1$; $\alpha>0$; $K\in \N$; $\beta\in (0,1]$\\

\vspace{2mm}

For $m \in \{1,2\}$, set $\widehat r_m \leftarrow \min \left\{\Pi_m(X_i): i \in \{1,\dots,\frac{n}{2}\}\right\}$ and 
$\widehat R_m \leftarrow \max \left\{\Pi_m(X_i): i \in \{1,\dots,\frac{n}{2}\}\right\}$.

\vspace{2mm}

\textbf{If} $\big(\frac{K}{n}\big)^{\frac{\beta}{2\beta+1}} \log^{3/2}(n) \leq n^{-\frac{\beta}{2\beta+2}}$: set $K' = K $ ~~\textbf{ else: } set  $K' = n^{\frac{1}{2\beta+2}} $. 

\vspace{2mm}

$h^* \leftarrow ( K' /n)^{1/(2\beta+1)}$.

\vspace{2mm}

\textbf{If } $\widehat R_1 - \widehat r_1 < h^*$: 

\vspace{1mm}

\hspace{5mm} $\widehat g \leftarrow \text{Alg\ref{alg:Alg_densities_1D}}\big(\Pi_2(X_{n/2+1}),\dots, \Pi_2(X_{n}
),  K'\big)$

\vspace{1mm}

\hspace{5mm} $\widehat\phi(x,y): = \frac{1}{\widehat R_1 - \widehat r_1}\mathbbm{1}_{x \in [\widehat r_1, \widehat R_1]} \, \widehat g(y)$.

\vspace{2mm}

\textbf{Else If } $\widehat R_2 - \widehat r_2 < h^*$:

\vspace{1mm}

\hspace{5mm} $\widehat g \leftarrow \text{Alg\ref{alg:Alg_densities_1D}}\big(\Pi_1(X_{n/2+1}),\dots, \Pi_1(X_{n}
), K'\big)$

\vspace{1mm}

\hspace{5mm}  $\widehat\phi(x,y): =  \frac{1}{\widehat R_2 - \widehat r_2} \mathbbm{1}_{y \in [\widehat r_2, \widehat R_2]} \, \widehat g(x)$.

\vspace{3mm}

\textbf{Else:}

\vspace{2mm}
 
\hspace{5mm}  For $m\in\{1,2\}$, set $h_m = \ell_m^{-1}(\widehat R_m - \widehat r_m)$, where $\ell_m=\big\lfloor(\widehat R_m - \widehat r_m ) /h^*\big\rfloor$

\vspace{2mm}

\hspace{5mm} For $m\in\{1,2\}$, set $E_m = \big\{-\big\lceil \widehat r_m/h_m\big\rceil,\dots, \big\lceil (1-\widehat r_m)/h_m \big\rceil\big\}$

\vspace{3mm}

\hspace{5mm} \textbf{For} $(j,j') \in E_1 \times E_2$:

\vspace{2mm}

\hspace{10mm} $A_j \,\leftarrow \Big[\widehat r_1 + jh_1, \,\widehat r_1 + (j+1)h_1\Big)$.

\vspace{2mm}

\hspace{10mm} $B_{j'} \leftarrow \Big[\widehat r_2 + j' h_2, \,\widehat r_2 + (j'+1)h_2\Big)$.

\vspace{2mm}

\hspace{10mm} $C_{jj'} \leftarrow A_j \times B_{j'}$.

\vspace{2mm}

\hspace{10mm} $G_{jj'} \leftarrow \frac{4}{n}\sum\limits_{n/2+1\le i \le 3n/4} \mathbbm{1}_{X_i \in C_{jj'}}$; ~~ $G'_{jj'} \leftarrow \frac{4}{n}\sum\limits_{3n/4+1\le i \le n} \mathbbm{1}_{X_i \in C_{jj'}}$;

\vspace{2mm}

\hspace{5mm} \textbf{If} $\big(\frac{K}{n}\big)^{\frac{\beta}{2\beta+1}} \log^{3/2}(n) > n^{-\frac{\beta}{2\beta+2}}$:

\vspace{2mm}

\hspace{10mm} $\widehat P^* \leftarrow (G+G')/2$, where $G=(G_{jj'})_{(j,j') \in E_1 \times E_2}$, \ $G'=(G'_{jj'})_{(j,j') \in E_1 \times E_2}$


\vspace{2mm}

\hspace{5mm} \textbf{Else:}

\vspace{2mm}
 
\hspace{10mm} $\widehat P^* \leftarrow \text{Alg\ref{alg:Alg}}\Big(\alpha, |E_1| \lor |E_2|, n,  \frac{n}{4}, G, G'\Big)$

\vspace{2mm}

$\widehat\phi (x,y): = \frac{1}{ h_1 h_2}\sum\limits_{j\in E_1} \sum\limits_{j'\in E_2}  \widehat P^*_{jj'} \,\mathbbm{1}_{(x,y)\in C_{jj'}} $

\vspace{2mm} 
\textbf{If} $\widehat \phi = 0$:

\vspace{2mm}
\hspace{5mm} \textbf{Return} $\widehat f = \mathbbm{1}_{[0,1]^2}$

\vspace{2mm}

\textbf{Return} $\widehat f = \frac{\widehat\phi}{\ \ \|\widehat\phi\|_{L_1}}$

\caption{Two-dimensional density estimator}

\end{algorithm}


\begin{remark}[Choice of $E_1$ and $E_2$ in Algorithm~\ref{alg:Alg_densities}]
In order to be able to apply the results of Section~\ref{sec:discrete}, we need the union of cells $C_{jj'}$ in Algorithm~\ref{alg:Alg_densities} to be such that 
\begin{equation}\label{eq:remark1}
    \sum_{j\in E_1} \sum_{j' \in E_2}G_{jj'} = \sum_{j\in E_1} \sum_{j'\in E_2} G'_{jj'} = 1.
\end{equation}
This condition cannot be guaranteed if we only take cells that form a partition of $[\widehat r_1, \widehat R_1] \times [\widehat r_2, \widehat R_2]$ since some data points from the second subsample $\{X_{n/2+1},\dots, X_n\}$ may fall outside of the estimated support $[\widehat r_1, \widehat R_1] \times [\widehat r_2, \widehat R_2]$. Taking the sets of indices $E_1$ and $E_2$ as in Algorithm~\ref{alg:Alg_densities} ensures that the union of cells $(C_{jj'})_{j\in E_1,j'\in E_2}$ contains the whole domain~$[0,1]^2$ if $\widehat R_m -\widehat r_m>h^*$ for $m\in \{1,2\}$. In fact, under this condition, $E_1$ and $E_2$ are the sets of indices of smallest cardinality such that the union of $(C_{jj'})_{j\in E_1,j'\in E_2}$ contains $[0,1]^2$.  Of course, some of these cells $C_{jj'}$ may fall beyond $[0,1]^2$. However,
    it does not affect the sums in \eqref{eq:remark1} since $f = 0$ on such cells, so that the associated $G_{jj'}, G'_{jj'}$ vanish almost surely. 
    Moreover, the order of magnitude of $|E_1|$ and $|E_2|$ remains controlled as needed. Indeed, we have $|E_1| \lor |E_2| \leq C /h^*$, which is sufficient for our purposes. 
\end{remark}

\begin{algorithm}\label{alg:Alg_densities_1D}
\SetAlgoLined
\vspace{1mm}
\textbf{Input:} $Z_1,\dots, Z_n$ and $K'\ge 1$.\\

\vspace{3mm}

$\widehat r \leftarrow \min \left\{Z_i: i \in \{1,\dots, \lfloor \frac{n}{2}\rfloor\}\right\}$; 
$\widehat R \leftarrow \max \left\{Z_i: i \in \{1,\dots, \lfloor \frac{n}{2}\rfloor\}\right\}$.

\vspace{3mm}

$h^* \leftarrow (K'/n)^{1/(2\beta+1)}.$

\vspace{3mm}

\textbf{If } $\widehat R - \widehat r < h^*$, \textbf{then return} $\frac{1}{\widehat R - \widehat r} \mathbbm{1}_{[\widehat r, \widehat R]}$.

\vspace{2mm}

\textbf{Else:} 

\vspace{2mm}

\hspace{10mm} $h = \ell^{-1}(\widehat R - \widehat r)$, where $\ell=\big\lfloor(\widehat R - \widehat r ) /h^*\big\rfloor$

\vspace{2mm}

\hspace{10mm} $A_j = \big[\widehat r + (j-1)h,\; \widehat r + jh\big)$, for $j=1,\dots, \ell-1$, and $A_\ell=\big[\widehat r + (\ell-1)h,\; \widehat R\big]$

\vspace{2mm}

\hspace{5mm} \textbf{For} 
$j \in \big\{1,\dots, \ell \big\}$:
\vspace{2mm}

\hspace{10mm} $N_j \leftarrow \sum_{i=\lfloor \frac{n}{2}\rfloor + 1}^n \mathbbm{1}_{Z_i \in A_j}$
\vspace{2mm}

\textbf{Return} $\widehat g = \frac{1}{n-\lfloor \frac{n}{2}\rfloor}\sum\limits_{j=1}^\ell \frac{1}{h} N_j \, \mathbbm{1}_{A_j}$.\footnote{The code of Algorithms~\ref{alg:Alg_densities} and \ref{alg:Alg_densities_1D} algorithm is available at https://github.com/hi-paris/Lowrankdensity}
\caption{One-dimensional density estimator}
\end{algorithm}

\vspace{3mm}

In what follows we denote by $\mathbb{P}_f$ the probability measure induced by $(X_1,\dots,X_n)$ when $X_i$'s are iid distributed with density $f$, and by $\mathbb{E}_f$ the corresponding expectation.

\begin{theorem} \label{th:upper bound-density estimation}
There exist constants $C_0'>0$, $C_1'>0$ depending only on $\alpha$ and $L$ such that for the estimator $\widehat f$ defined by Algorithm \ref{alg:Alg_densities} with $\alpha >1$ we have
\begin{align}
  \label{eq1:th:upper bound-density estimation}
  \sup_{f\in \mathcal{G}_{K,\beta}} \mathbb{P}_f\left(\|\widehat f - f\|_{L_1} > C_1' \left\{\left(\frac{K}{n}\right)^{\beta/(2\beta+1)} \log^{3/2}\! n \land n^{-\frac{\beta}{2\beta+2}}\right\}\right) \le C_0' (\log n)^2 n^{1/(2\beta+1)-\alpha},  
  \end{align}
  and for the estimator $\widehat f$ defined by Algorithm \ref{alg:Alg_densities} with $\alpha> 4/3$ we have
  \begin{align}\label{eq2:th:upper bound-density estimation}
\sup_{f\in \mathcal{G}_{K,\beta}} \mathbb{E}_f \|\widehat f - f\|_{L_1} \le C_1' \left\{\left(\frac{K}{n}\right)^{\beta/(2\beta+1)} \log^{3/2}\! n \land n^{-\frac{\beta}{2\beta+2}}\right\}.     
  \end{align}
\end{theorem}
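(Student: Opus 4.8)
The plan is to reduce the density estimation problem to the discrete problem of Section~\ref{sec:discrete} and invoke Theorems~\ref{prop:UB} and~\ref{th:upper discrete in expectation}. First I would fix $f\in\mathcal{G}_{K,\beta}$ with true support $[r_1,R_1]\times[r_2,R_2]$ and control the support estimation error: since the data in the first half are iid from $f$, the smallest enclosing rectangle $[\widehat r_1,\widehat R_1]\times[\widehat r_2,\widehat R_2]$ satisfies $\widehat r_m \ge r_m$, $\widehat R_m \le R_m$, and on a high-probability event each of the four one-dimensional marginal gaps $\widehat r_m - r_m$ and $R_m - \widehat R_m$ is $O\big(\log n / n\big)$ up to the relevant marginal density factor; this follows from a standard argument on order statistics (or a covering argument using that $f$ is bounded on its support, which is itself a consequence of $\beta$-H\"olderness plus $\int f = 1$ on a rectangle). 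The contribution of the ``missed'' boundary strip to $\|\widehat f - f\|_{L_1}$ is then negligible compared to the target rate. I would also record here the bound $\|f\|_\infty \le C$ on $\Supp(f)$, which is needed throughout.

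Next I would treat the generic (non-degenerate) branch $\widehat R_m - \widehat r_m \ge h^*$ for $m=1,2$. Writing $P^*_{jj'}$ for the true cell probabilities $\int_{C_{jj'}} f$, the matrix $(P^*_{jj'})_{(j,j')\in E_1\times E_2}$ has rank at most $K$: indeed $f(x,y) = \sum_{k=1}^K u_k(x)v_k(y)$, so $\int_{C_{jj'}} f = \sum_k \big(\int_{A_j} u_k\big)\big(\int_{B_{j'}} v_k\big)$, a sum of $K$ rank-one terms. The histograms $G, G'$ are exactly $\tfrac{4}{n}$ times multinomial counts with parameter $(P^*_{jj'})$ over $n/4$ observations each (using the Remark to ensure the cell probabilities sum to $1$, as cells outside $[0,1]^2$ carry zero mass a.s.), so $\widehat P^*$ produced by Alg\ref{alg:Alg} falls exactly under the hypotheses of Theorem~\ref{prop:UB} (in the low-rank branch) or is the plain histogram $(G+G')/2$ (in the high-rank branch, where the target rate is $n^{-\beta/(2\beta+2)}$ and the number of cells is $\asymp (h^*)^{-2} \asymp n^{1/(\beta+1)}$, so the histogram rate $\sqrt{\#\text{cells}/n}$ matches). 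This gives $\|\widehat P^* - P^*\|_1 \lesssim \sqrt{K'd/n}\,\mathrm{polylog}$ with $d \asymp 1/h^*$, hence, after the final $L_1$-normalization (a standard fact: normalizing an $L_1$-close nonnegative approximant of a density at most doubles the error) and scaling by $1/(h_1h_2)$, a piecewise-constant bias term from replacing $f$ by its cell-averages. The bias is $O\big(L(h^*)^\beta\big)$ in $L_1$ by $\beta$-H\"olderness on the support, and $h^*=(K'/n)^{1/(2\beta+1)}$ is exactly the value balancing bias $(h^*)^\beta$ against the stochastic term $\sqrt{K'/(n h^*)} = \sqrt{K'/(n)}\,(h^*)^{-1/2}$; plugging in gives the claimed rate $(K/n)^{\beta/(2\beta+1)}$ (resp. $n^{-\beta/(2\beta+2)}$) up to logs.

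Then I would handle the degenerate branches, say $\widehat R_1 - \widehat r_1 < h^*$. Here $R_1 - r_1 \le \widehat R_1 - \widehat r_1 + O(\log n/n) < 2h^*$ on the good event, so the true support is thin in the first coordinate; by $\beta$-H\"olderness, $f(x,y)$ differs from its $x$-average $\bar f(y) := \tfrac{1}{R_1-r_1}\int_{r_1}^{R_1} f(x,y)\,dx$ by $O(L(h^*)^\beta)$ uniformly, and $\bar f$ is a $\beta$-H\"older one-dimensional density on $[r_2,R_2]$; the sub-routine Alg\ref{alg:Alg_densities_1D} is a histogram estimator with bin width $h^*$ on $n/2$ observations, whose $L_1$ error I would bound by the standard one-dimensional histogram analysis: bias $O(L(h^*)^\beta)$ plus stochastic term $O\big(\sqrt{\#\text{bins}/n}\big) = O\big(\sqrt{1/(nh^*)}\big)$, again balanced by the choice of $h^*$, giving the one-dimensional rate $(K'/n)^{\beta/(2\beta+1)}$, which dominates (is no larger than) the two-dimensional target. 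Finally I would assemble the pieces: intersect the support-estimation good event with the Alg\ref{alg:Alg} good event of Theorem~\ref{prop:UB}, note the failure probability is $O\big((\log n)^2 n^{1/(2\beta+1)-\alpha}\big)$ (the $d^{-1}$ in Theorem~\ref{prop:UB} becomes $h^*\asymp n^{-1/(2\beta+1)}$, and the $N^{-\alpha}$ term with $N=n$ gives $n^{-\alpha}$, whose product with $d=1/h^*$ yields the stated exponent), and integrate the tail bound to get the in-expectation statement, using the trivial bound $\|\widehat f - f\|_{L_1}\le 2$ on the failure event and $\alpha>4/3$ to ensure $n^{1/(2\beta+1)-\alpha}$ is summably small relative to the rate for all $\beta\in(0,1]$.

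The main obstacle, I expect, is the bookkeeping around the estimated support: ensuring that the random bin widths $h_1,h_2$ (which are $h^*$ only up to the data-dependent rounding $\ell_m=\lfloor(\widehat R_m-\widehat r_m)/h^*\rfloor$) stay within a constant factor of $h^*$, that $|E_1|\lor|E_2|\asymp 1/h^*$ so that the ``$d$'' fed to Alg\ref{alg:Alg} has the right order, and that the boundary cells partially outside $[0,1]^2$ or straddling $\partial\Supp(f)$ contribute only lower-order bias — all of this has to be done on the good event and the exceptional probabilities folded into $C_0'(\log n)^2 n^{1/(2\beta+1)-\alpha}$. The low-rank/high-rank case split adds a further layer: one must check that $K' = n^{1/(2\beta+2)}$ in the high-rank regime makes the plain histogram $(G+G')/2$ achieve exactly $n^{-\beta/(2\beta+2)}$ and that this is consistent with skipping the SVD denoising step.
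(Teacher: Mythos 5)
Your plan matches the paper's proof in all essential respects: the same reduction to the discrete low-rank problem via the rank-$K$ cell-probability matrix, the same degenerate/non-degenerate and low-rank/high-rank case splits, the same Hölder bias bound on interior cells with separate treatment of cells meeting $\partial\Supp(f)$, the same normalization-doubles-the-error step, and the same bookkeeping of the failure probability $(\log n)^2 n^{1/(2\beta+1)-\alpha}$.

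One caveat: your claim that $\|f\|_\infty \le C$ on $\Supp(f)$ (``needed throughout'') is false — a $\beta$-Hölder density on a small rectangle $[r_1,R_1]\times[r_2,R_2]$ has $\|f\|_\infty \gtrsim 1/((R_1-r_1)(R_2-r_2))$, which is unbounded over $\mathcal{G}_{K,\beta}$. The paper never needs such a bound: the support-estimation step is phrased entirely in terms of \emph{mass} (it works with the quantiles of order $n^{-1/3}$ of the marginals, so the strips missed by $[\widehat r_m,\widehat R_m]$ carry mass $O(n^{-1/3})$ regardless of how the density behaves), and the thin-support branch is where a pointwise lower bound on the marginal of order $1/\widehat\Delta$ is actually derived and used. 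Since your argument ultimately only uses the mass of the missed boundary region, this is a repairable imprecision rather than a structural gap, but the length-based phrasing (``gaps are $O(\log n/n)$'') and the boundedness claim should be replaced by the quantile/mass argument.
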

Theorem~\ref{th:upper bound-density estimation} guarantees that the estimator $\widehat f$ adapts to the best rate between $(K/n)^{\beta/(2\beta+1)}$, which is a ``one-dimensional" rate as function of $n$ but deteriorates as $K$ grows, and $n^{-\beta/(2\beta+2)}$, which is the standard rate of estimating a $\beta$-H\"older two-dimensional density. This demonstrates a dimension reduction property. The elbow between the two rates occurs at $K\asymp n^{1/(2\beta+2)}$. Note that, in our setting with unknown support of $f$, the possibility to estimate the density even with the slow rate $n^{-\beta/(2\beta+2)}$ does not follow from the results on nonparametric density estimation developed in the prior work (see \cite{goldenshluger2014adaptive} and the references therein).

The following lower bound shows that the rate obtained in Theorem~\ref{th:upper bound-density estimation} cannot be improved up to a logarithmic factor. We derive even a stronger lower bound that holds for the subclass of $\mathcal{G}_{K,\beta}$ containing densities with support exactly $[0,1]^2$ that can be decomposed as mixtures of separable densities that are Hölder smooth over their support. 
%
Specifically, let $\mathcal{G}_{K,\beta}^\circ $ be the set of all probability densities $f$ with support exactly $[0,1]^2$ and such that 
    \vspace{-3mm}
    \begin{equation*}
    f(x,y)= \sum_{k=1}^K w_k \, u_k(x)v_k(y), \quad \forall \ (x,y)\in [0,1]^2,
    \vspace{-3mm}
    \end{equation*}
    where $u_k , v_k $ are are $\beta$-H\"older probability densities on $[0,1]$ for all $k$, and 
        $\sum_{k=1}^K w_k = 1, \ w_k \ge 0, \forall k.$ Clearly, $\mathcal{G}_{K,\beta}^\circ\subset \mathcal{G}_{K,\beta}  $.

\begin{theorem}
\label{th:lower bound for density estimation}
Let $L>0$, $\beta\in (0,1]$. There exist two positive constants $c, c'$ that can depend only on $L$ and $\beta$ such that
\begin{equation}\label{eq1:th:lower bound for density estimation} \inf_{\widetilde f} \sup_{f \in \mathcal{G}_{K,\beta}^\circ } \mathbb{P}_f\left(\|\widetilde f - f\|_{L_1} \geq c \Big\{(K/n)^{\beta/(2\beta+1)} \land n^{-\beta/(2\beta+2)}\Big\}\right) \geq c',
\end{equation}
and
\begin{equation}\label{eq:th:LBdiscr3}
\inf_{\widetilde f} \sup_{f \in \mathcal{G}_{K,\beta}^\circ } \mathbb{E}_f \|\widetilde f - f\|_{L_1} \geq c \Big\{(K/n)^{\beta/(2\beta+1)} \land n^{-\beta/(2\beta+2)}\Big\},    
\end{equation}
where $\inf_{\widetilde f}$ denotes the infimum over all density estimators.
\end{theorem}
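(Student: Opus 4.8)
The plan is to construct, for each of the two regimes separated by the elbow $K\asymp n^{1/(2\beta+2)}$, a finite family of densities in $\mathcal{G}_{K,\beta}^\circ$ that are pairwise separated in $L_1$ by the claimed rate while being statistically indistinguishable, and then apply a standard Fano-type (or Assouad-type) argument. First I would treat the \emph{nonparametric regime} $K\gtrsim n^{1/(2\beta+2)}$, where the target rate is $n^{-\beta/(2\beta+2)}$: here I would use the classical construction for $\beta$-H\"older density estimation on $[0,1]^2$, namely perturb a fixed smooth baseline density $f_0$ (say the uniform density, or a smooth density bounded away from $0$) by a sum of small bumps $\pm\,\rho\, h^{\beta}\,\varphi((\cdot-z_\ell)/h)$ placed on an $h$-grid of $\asymp h^{-2}$ cells, with $h\asymp n^{-1/(2\beta+2)}$ and $\varphi$ a fixed smooth bump supported on the unit cell with $\int\varphi=0$. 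Each such perturbation is $\beta$-H\"older with the right constant for $\rho$ small, stays a density (positivity from $f_0$ bounded below, mass from $\int\varphi=0$), and — crucially — I must check it lies in $\mathcal{G}_{K,\beta}^\circ$: choosing $f_0$ itself a mixture of product densities and $\varphi(x,y)=\varphi_1(x)\varphi_2(y)$ a single product bump, each perturbed density is a mixture of $O(1)$ product densities per active cell; since we only need $K\gtrsim n^{1/(2\beta+2)}$ active perturbations this fits within the rank budget. Vararshamov-Gilbert gives $2^{c h^{-2}}$ well-separated sign patterns with pairwise $L_1$ distance $\gtrsim h^{\beta}$, and the KL divergence between any two is $O(n h^{-2}\cdot h^{2\beta+2}/\text{(lower bound on }f_0))=O(n h^{2\beta})=O(h^{-2})$, so Fano yields the lower bound $h^{\beta}\asymp n^{-\beta/(2\beta+2)}$.

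Next I would treat the \emph{low-rank regime} $K\lesssim n^{1/(2\beta+2)}$, where the target is $(K/n)^{\beta/(2\beta+1)}$. Here the construction must genuinely exploit the rank-$K$ constraint to beat the nonparametric rate, so I would build densities that are \emph{separable along one coordinate} — e.g. of the form $f(x,y)=\big(\sum_{k=1}^K w_k u_k(x)v_k(y)\big)$ where the $x$-marginal structure carries $\asymp K$ free "directions" and along each we perturb a one-dimensional $\beta$-H\"older profile on a $1/h$-grid with $h\asymp (K/n)^{1/(2\beta+1)}$. Concretely, partition $[0,1]_x$ into $K$ blocks, on block $k$ let $u_k$ be supported (a smooth partition-of-unity bump), and let $v_k(y)=g_0(y)+\sum_\ell \sigma_{k\ell}\,\rho h^\beta\varphi_2((y-z_\ell)/h)$ range over sign patterns $\sigma_{k\ell}\in\{\pm1\}$ on the $\asymp 1/h$ cells of $[0,1]_y$; with $w_k$ uniform this is a genuine mixture of $K$ separable $\beta$-H\"older densities. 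The total number of free bits is $\asymp K/h$, the pairwise $L_1$ separation is $\gtrsim h^\beta$ (the blocks being disjoint in $x$, the perturbations don't cancel across $k$), and the KL between neighbors is $O(n\cdot (K/h)\cdot \tfrac1K\cdot h^{2\beta+1}) = O(n h^{2\beta}) = O(K/h)$ — matching the log-cardinality — which via Fano/Assouad gives the lower bound $h^\beta\asymp (K/n)^{\beta/(2\beta+1)}$. I would then combine the two regimes: since the claimed rate is the minimum of the two and in each regime the relevant term is the smaller one, the two lower bounds together establish \eqref{eq1:th:lower bound for density estimation}, and the in-expectation bound \eqref{eq:th:LBdiscr3} follows from the in-probability bound by Markov's inequality (the rate is bounded, so $\mathbb{E}\gtrsim c'\cdot(\text{rate})$).

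The main obstacle I anticipate is not the information-theoretic bookkeeping — which is routine once the families are fixed — but rather \textbf{verifying membership in the restrictive class $\mathcal{G}_{K,\beta}^\circ$}: every perturbed object must simultaneously (i) be a \emph{bona fide probability density} (nonnegativity forces $\rho$ small relative to the baseline lower bound, and zero-mean bumps keep total mass $1$), (ii) be \emph{$\beta$-H\"older on all of $[0,1]^2$ with the prescribed constant $L$} (this constrains $\rho$ through the scaling $h^\beta\cdot h^{-\beta}=1$ and forces the baseline to be smooth and the bumps $C^\infty$ with controlled derivatives, and one must be careful at block boundaries in the low-rank construction so that the partition-of-unity $u_k$'s glue smoothly), and (iii) be genuinely expressible as a mixture of \emph{exactly} $K$ \emph{product} densities each of which is $\beta$-H\"older \emph{on $[0,1]$} — this last point is the delicate one, since naively adding a product bump to one mixture component can destroy the H\"older regularity or positivity of that single component $u_k$ or $v_k$ even when the sum remains fine; the fix is to put the perturbations inside the $v_k$'s (or $u_k$'s) that are themselves perturbations of a fixed smooth positive density $g_0$ bounded below, so each component stays a legitimate $\beta$-H\"older density on $[0,1]$ for $\rho$ small enough depending only on $L,\beta$. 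Once the constants $\rho$ and the grid scale $h$ are pinned down so that all three conditions hold uniformly, the rest is a direct invocation of the Varshamov–Gilbert lemma and Fano's inequality.
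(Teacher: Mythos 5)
Your plan follows essentially the same route as the paper's proof. The low-rank construction you describe ($K$ disjoint blocks in $x$, each paired with a $y$-profile carrying independent $\pm$ bumps of height $\asymp L h^{\beta}$ on a grid of mesh $h \asymp (K/n)^{1/(2\beta+1)}$, each block contributing a single product of densities) is exactly the family $f_\omega$ of \eqref{eq_def_prior}, and your anticipated fix for membership in $\mathcal{G}_{K,\beta}^\circ$ --- keep each $v_k$ a small perturbation of a positive baseline so that every component individually remains a $\beta$-H\"older density --- is precisely what the paper does (with the $u_k$ taken to be normalized indicators, which makes each block a single product and trivializes the H\"older requirement on the $u_k$'s). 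The paper covers both regimes with this one construction by capping the number of blocks at $K'=K\wedge\lfloor n^{1/(2\beta+2)}\rfloor$: at the cap the $y$-mesh equals the $x$-mesh and the rate becomes $n^{-\beta/(2\beta+2)}$, so no separate two-dimensional construction is needed. The remaining differences are cosmetic: the paper uses Assouad with $\chi^2$ (yielding the expectation bound directly) and then passes to the in-probability bound via Lemma~\ref{lem:reduction of lower bound}, whereas you go Fano $\to$ probability $\to$ expectation; both directions work (your last step is not Markov but the trivial inequality $\mathbb{E}X \ge t\,\mathbb{P}(X\ge t)$, which is all you need).

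One step of your nonparametric-regime argument would fail as written. You place bumps on all $\asymp h^{-2}$ cells of a two-dimensional grid and claim this "fits within the rank budget" because there are "$O(1)$ product densities per active cell" and only "$K\gtrsim n^{1/(2\beta+2)}$ active perturbations." These two statements are incompatible: if every cell is active, one product per cell gives $\asymp h^{-2}=n^{2/(2\beta+2)}$ separable components, far exceeding $K$; if instead only $\asymp h^{-1}=n^{1/(2\beta+2)}$ cells are active, the pairwise $L_1$ separation drops to $\asymp h^{\beta+1}$, which is too small. The correct accounting is that all $h^{-2}$ cells are active but bumps sharing an $x$-column share the same $x$-factor, so the whole function is a mixture of only $h^{-1}+O(1)\lesssim K$ products --- i.e., exactly the column-grouped construction you already use in the low-rank regime, specialized to $h^{-1}\asymp n^{1/(2\beta+2)}$ blocks. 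This grouping also forces the $x$-factors to be nonnegative (the paper takes them constant on their blocks), since the components of $\mathcal{G}_{K,\beta}^\circ$ must themselves be probability densities: a zero-mean product bump $\varphi_1\varphi_2$ added as its own mixture component is not admissible. With that correction the two regimes merge into a single construction and the rest of your argument goes through.
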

Consider now the minimax estimation rate on $\mathcal{G}_{K,\beta}$ with probability $\delta>0$:
\begin{equation}\label{def:minimax_rate_densities}
    \overline\psi_\delta(n,\mathcal{G}_{K,\beta}) = \inf \Bigg\{t>0 ~\Big|~ \inf_{\tilde f}\, \sup_{f \in \mathcal{G}_{K,\beta}} \mathbb{P}_f\Big( \big\|\tilde{f}(X_1,\dots,X_n)-f\big\|_{L_1} > t\Big) \leq \delta\Bigg\}.
\end{equation}
Theorems~\ref{th:upper bound-density estimation} and \ref{th:lower bound for density estimation} immediately imply the following corollary.

\begin{corollary}\label{th:densities}
For any $\gamma>0$, $\beta\in (0,1]$, $L>0$, there exist two constants $c,C>0$ depending only on $\gamma,\beta$ and $L$ such that 
$$c\big\{(K/n)^{\beta/(2\beta+1)} \land n^{-\beta/(2\beta+2)} \big\}\leq \overline\psi_{n^{-\gamma}}(n,\mathcal{G}_{K,\beta}) \leq C \big\{(K/n)^{\beta/(2\beta+1)} (\log n)^{3/2}\land n^{-\beta/(2\beta+2)} \big\}.$$ 
\end{corollary}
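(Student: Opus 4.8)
The plan is to read off both inequalities directly from Theorems~\ref{th:upper bound-density estimation} and~\ref{th:lower bound for density estimation}; no new analysis is needed, and the only two points requiring attention are (i) calibrating the tuning parameter $\alpha$ of Algorithm~\ref{alg:Alg_densities} as a function of $\gamma$ so that the polynomially small deviation bound of Theorem~\ref{th:upper bound-density estimation} becomes the prescribed confidence level $n^{-\gamma}$, and (ii) handling the regime of small $n$, where $n^{-\gamma}$ may exceed the absolute constants appearing in those theorems and where the target rate is of constant order.

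For the upper bound, I would fix $\alpha=\alpha(\gamma,\beta)$ with $\alpha>1$ and $\alpha-\tfrac{1}{2\beta+1}-\gamma>2$, and run Algorithm~\ref{alg:Alg_densities} with this $\alpha$. Inequality~\eqref{eq1:th:upper bound-density estimation} then bounds the probability that $\|\widehat f-f\|_{L_1}$ exceeds $C_1'\big\{(K/n)^{\beta/(2\beta+1)}\log^{3/2}n\land n^{-\beta/(2\beta+2)}\big\}$ by $C_0'(\log n)^2 n^{1/(2\beta+1)-\alpha}$. Since $(\log n)^2\le n$ for all $n\ge 1$ and the exponent is smaller than $-\gamma-2$, this probability is at most $C_0' n^{-\gamma-1}$, hence at most $n^{-\gamma}$ once $n$ exceeds a constant $N_0$ depending only on $\gamma,\beta$; by the definition~\eqref{def:minimax_rate_densities} of $\overline\psi_{n^{-\gamma}}$ this gives the upper bound for $n\ge N_0$. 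For the finitely many $2\le n<N_0$ the quantity in braces is bounded below by $N_0^{-\beta/(2\beta+2)}>0$, whereas the trivial estimator $\mathbbm{1}_{[0,1]^2}$ has $L_1$-error at most $2$ almost surely, so enlarging the constant $C$ covers this range as well.

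For the lower bound, I would use $\mathcal{G}_{K,\beta}^\circ\subseteq\mathcal{G}_{K,\beta}$: Theorem~\ref{th:lower bound for density estimation} supplies constants $c_0,c_0'>0$ depending only on $L,\beta$ such that $\inf_{\widetilde f}\sup_{f\in\mathcal{G}_{K,\beta}}\mathbb{P}_f\big(\|\widetilde f-f\|_{L_1}\ge c_0\big\{(K/n)^{\beta/(2\beta+1)}\land n^{-\beta/(2\beta+2)}\big\}\big)\ge c_0'$. Whenever $n^{-\gamma}<c_0'$, that is $n>(c_0')^{-1/\gamma}=:n_2$, every $t<c_0\big\{(K/n)^{\beta/(2\beta+1)}\land n^{-\beta/(2\beta+2)}\big\}$ satisfies $\inf_{\widetilde f}\sup_f\mathbb{P}_f(\|\widetilde f-f\|_{L_1}>t)\ge c_0'>n^{-\gamma}$, whence $\overline\psi_{n^{-\gamma}}(n,\mathcal{G}_{K,\beta})\ge c_0\big\{(K/n)^{\beta/(2\beta+1)}\land n^{-\beta/(2\beta+2)}\big\}$. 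For $2\le n\le n_2$ the expression in braces is at most $1$, and since the minimax error at any confidence level $\delta\le 2^{-\gamma}<1$ over the infinite-dimensional class $\mathcal{G}_{K,\beta}^\circ$ with bounded sample size is bounded below by a positive constant $c_3=c_3(\gamma,\beta,L)$ (for instance via Fano's inequality applied to a packing of $\mathcal{G}_{K,\beta}^\circ$ by densities with arbitrarily small mutual Kullback--Leibler divergence), the lower bound holds on that range too with $c=\min(c_0,c_3)$.

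The step I expect to be the most delicate is purely bookkeeping rather than conceptual: making the choice of $\alpha$ compatible with the constant $C_0'=C_0'(\alpha)$ in Theorem~\ref{th:upper bound-density estimation}, so that ``$\alpha$ chosen in terms of $\gamma$'' is not circular, and verifying that the small-$n$ corrections in both directions are genuinely absorbed into constants depending only on $\gamma,\beta,L$. Beyond Theorems~\ref{th:upper bound-density estimation} and~\ref{th:lower bound for density estimation} the argument uses only elementary facts about the minimax risk.
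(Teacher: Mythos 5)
Your proposal is correct and follows exactly the route the paper takes: Corollary~\ref{th:densities} is stated there as an immediate consequence of Theorems~\ref{th:upper bound-density estimation} and~\ref{th:lower bound for density estimation}, and your argument simply supplies the bookkeeping (the choice of $\alpha$ as a function of $\gamma,\beta$, and the treatment of the small-$n$ regimes) that the paper leaves implicit. The only loosely phrased point is the parenthetical Fano justification for $2\le n\le n_2$ --- one cannot have a fixed $L_1$-separation together with arbitrarily small pairwise Kullback--Leibler divergences, only divergences that are small relative to the logarithm of the packing number --- but this concerns a bounded range of $n$ and does not affect the validity of the argument.
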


\section{Adaptive density estimation}

The density estimators proposed in Section~\ref{sec:densities} require knowledge of the number of separable components $K$ and of the smoothness $\beta$. In this section, we provide an estimator that is adaptive to both $K$ and $\beta$.

Throughout this section, we assume that $n\ge 3$. We consider a grid $\{\beta_1,\dots \beta_{\lceil\log (n) \log \log(n)\rceil}\}$ on the values of $\beta\in (0,1]$, where 
$$\beta_j = \Big(1+\frac{1}{\log n }\Big)^{-j+1}.$$
Let $K_{\max}\ge 2$ be an integer. 
Set $m=K_{\max}\lceil\log (n) \log \log(n)\rceil$ and fix $\alpha>4/3$. The adaptive estimator is obtained by the minimum distance choice from the family of $m$ estimators
$(\widehat f_{(K,j)})_{K\in [K_{\max}], 1\le j \le \lceil\log (n) \log \log(n)\rceil }$, where $\widehat f_{(K,j)}$ is an output of Algorithm~\ref{alg:Alg_densities} with parameters $K$, $\beta=\beta_j$, $\alpha$ when the input sample is $X_1,\dots,X_n$. The minimum distance estimator (see, e.g., \cite{DevroyeLugosi}) is defined as follows:
\begin{align}
    \widehat f^* = \widehat f_{(K^*,j^*)} \ \text{where} \ (K^*,j^*) \in \underset{(K,j)}{\rm argmin} \max_{B\in\mathcal{B}} \Big| \int_B \widehat f_{(K,j)} - \mathbb{P}_n(B) \Big|, \label{eq:def_adapt_estim}
\end{align}
and $\mathcal{B}=\{B_{ii'}, i, i'=1,\dots, m, i\ne i'\}$ with $B_{ii'}=\{x: \widehat f_{i'}(x)> \widehat f_{i}(x)\} $ for $i,i'\in \{(K,j):K\in [K_{\max}], 1\le j \le \lceil\log (n) \log \log(n)\rceil \}$. 
For a set $B\in\mathcal{B}$, the notation $\mathbb{P}_n(B)$ stands for the empirical probability measure of this set computed from the sample $(X_1,\dots,X_n)$. 

Note that each $\widehat f_{(K,j)}$ is a piece-wise constant function, so that the integrals in the definition of the adaptive estimator $\widehat f^*$ can be easily computed. To get $\widehat f^*$ we need $O(m^3)$ computations of such integrals \cite{DevroyeLugosi}, where $m$ is logarithmic in $n$.

In what follows, we set $K_{\max}=\lceil\sqrt{n}\rceil$, which is essentially the smallest sufficient choice. Indeed, $\sqrt{n} = \sup_{\beta\in (0,1]} n^{\frac{1}{2\beta+2}}$, while (as discussed in Section~\ref{sec:densities}) choosing $K$ over the threshold  $n^{\frac{1}{2\beta+2}}$ makes no sense since it does not bring any improvement compared to the standard two-dimensional rate $n^{-\frac{\beta}{2\beta+2}}$. The next theorem gives a bound on the $L_1$-risk of the adaptive estimator $\widehat f^*$.

\begin{theorem}\label{th:adaptive_rate}  
    There exists a constant $C>0$ such that estimator $\widehat f^*$ defined in~\eqref{eq:def_adapt_estim} with $K_{\max}=\lceil\sqrt{n}\rceil$ satisfies
    \begin{align*}
        \sup_{f\in \mathcal{G}_{K,\beta}} \mathbb{E}_f \|\widehat f^* - f\|_{L_1} \le C\left\{\left(K/n\right)^{\beta/(2\beta+1)} \log^{3/2}\! n \land n^{-\frac{\beta}{2\beta+2}}\right\}, \quad \forall 
        K\ge 1, \beta \in (0,1].
\end{align*}
\end{theorem}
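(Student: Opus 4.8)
The plan is to combine the non-adaptive guarantees of Theorem~\ref{th:upper bound-density estimation} with the classical oracle inequality for the minimum distance (Yatracos/Devroye--Lugosi) selection rule. First I would recall the deterministic skeleton of the minimum distance method: if $\widehat f^* $ is selected from a finite family $\{\widehat f_1,\dots,\widehat f_m\}$ by minimizing $\Delta(g)=\max_{B\in\mathcal{B}}|\int_B g - \mathbb{P}_n(B)|$ over the Yatracos class $\mathcal{B}$ associated with the family, then
\begin{align*}
\|\widehat f^* - f\|_{L_1} \le 3 \min_{1\le i\le m}\|\widehat f_i - f\|_{L_1} + 4\sup_{B\in\mathcal{B}}\Big|\mathbb{P}_n(B) - \mathbb{P}_f(B)\Big|.
\end{align*}
This is the standard bound from \cite{DevroyeLugosi}; I would quote it rather than reprove it. Taking expectations, the task reduces to two things: (i) controlling the approximation term $\mathbb{E}_f\min_i\|\widehat f_i - f\|_{L_1}$ by the target rate, and (ii) controlling the empirical process term $\mathbb{E}_f\sup_{B\in\mathcal{B}}|\mathbb{P}_n(B)-\mathbb{P}_f(B)|$.

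For (i), the key point is that the grid $\{\beta_j\}$ is fine enough that some $\beta_j$ is within a factor $1+1/\log n$ of the true $\beta$, and the range of $j$ covers all of $(0,1]$ since $\beta_{\lceil \log n \log\log n\rceil}\le e^{-\log\log n} = 1/\log n$, which is small enough that the rate at such $\beta_j$ is no worse than at the true $\beta$ up to constants (one checks that $(K/n)^{\beta/(2\beta+1)}$ and $n^{-\beta/(2\beta+2)}$ are both continuous in $\beta$ with bounded logarithmic derivative, so a multiplicative $\beta$-perturbation of size $1/\log n$ changes them by at most a constant factor — here the $\log n$ in the grid spacing is exactly calibrated to absorb the $\log$ in $n^{1/(2\beta+2)}$-type terms). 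Since $f\in\mathcal{G}_{K,\beta}\subseteq\mathcal{G}_{K,\beta_j}$ for the chosen $\beta_j$ (a $\beta$-Hölder function is $\beta_j$-Hölder for $\beta_j\le\beta$, and the separable-rank structure is unchanged), and since $K\le K_{\max}=\lceil\sqrt n\rceil$ is in the grid of $K$-values, the estimator $\widehat f_{(K,j)}$ is exactly the estimator analyzed in Theorem~\ref{th:upper bound-density estimation} for the class $\mathcal{G}_{K,\beta_j}$. Its expected $L_1$ error is therefore bounded by $C\{(K/n)^{\beta_j/(2\beta_j+1)}\log^{3/2}n \land n^{-\beta_j/(2\beta_j+2)}\}$, which by the continuity argument is $\le C'\{(K/n)^{\beta/(2\beta+1)}\log^{3/2}n \land n^{-\beta/(2\beta+2)}\}$. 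Since $\min_i\|\widehat f_i - f\|_{L_1}\le\|\widehat f_{(K,j)} - f\|_{L_1}$, taking expectations gives the desired bound on term (i).

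For (ii), the Yatracos class $\mathcal{B}$ has at most $m(m-1) = O(m^2)$ elements with $m = K_{\max}\lceil\log n\log\log n\rceil = O(\sqrt n\,\mathrm{polylog}(n))$, so $|\mathcal{B}| = O(n\,\mathrm{polylog}(n))$. A union bound with Bernstein or Hoeffding's inequality gives $\mathbb{E}_f\sup_{B\in\mathcal{B}}|\mathbb{P}_n(B)-\mathbb{P}_f(B)| \le C\sqrt{\log|\mathcal{B}|/n} = C'\sqrt{\log n/n}$ (up to a further $\mathrm{polylog}$), and since $\sqrt{1/n} \le n^{-\beta/(2\beta+2)}$ for all $\beta\in(0,1]$, this term is dominated by the target rate up to logarithmic factors. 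Putting (i) and (ii) together with the oracle inequality yields the claim. The main obstacle — and the only place requiring genuine care rather than bookkeeping — is verifying that the $\beta$-grid spacing $1+1/\log n$ is exactly matched to the logarithmic slack in the rates, i.e. that $n^{\beta_j/(2\beta_j+1)}/n^{\beta/(2\beta+1)}$ and the analogous ratio for the other exponent, as well as the discrete-to-continuous transition threshold $K' = n^{1/(2\beta_j+2)}$ versus $n^{1/(2\beta+2)}$, stay bounded; this is a short but slightly delicate computation estimating $\partial_\beta$ of these exponents times $\log n$ times the grid spacing.
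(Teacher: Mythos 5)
Your overall strategy is exactly the paper's: the Devroye--Lugosi minimum distance oracle inequality, Hoeffding plus a union bound over the Yatracos class of size $O(m^2)$, the inclusion $\mathcal{G}_{K,\beta}\subseteq\mathcal{G}_{K,\beta_j}$ for a grid point $\beta_j\le\beta$, and the verification that perturbing $\beta$ by a multiplicative factor $1+1/\log n$ changes the exponents $\beta/(2\beta+c)$ by at most $1/\log n$, hence the rates by at most a constant. That part is correct and is the same computation the paper carries out.

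There is, however, one point where your argument as written breaks down: the regime $\beta<\beta_M$ with $M=\lceil\log(n)\log\log(n)\rceil$, i.e.\ $\beta\lesssim 1/\log n$. You assert that ``the range of $j$ covers all of $(0,1]$,'' but the grid only reaches down to $\beta_M\approx 1/\log n$; for smaller $\beta$ there is no grid point $\beta_j\le\beta$, and the class inclusion goes the wrong way: $\mathcal{G}_{K,\beta}\supseteq\mathcal{G}_{K,\beta_M}$, not $\subseteq$. So you cannot invoke Theorem~\ref{th:upper bound-density estimation} for $\widehat f_{(K,M)}$ on a density that is only $\beta$-H\"older with $\beta<\beta_M$ --- the comparison of rates between $\beta$ and $\beta_M$ is beside the point because the non-adaptive guarantee simply does not apply to such $f$. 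The correct (and easy) fix, which is what the paper does, is to observe that in this regime the target rate itself is bounded below by a constant: $(K/n)^{\beta/(2\beta+1)}\land n^{-\beta/(2\beta+2)}\ge n^{-\beta}\ge n^{-1/\log n}=1/e$, so the trivial bound $\|\widehat f^*-f\|_{L_1}\le 2$ already gives the claim there. A second, smaller omission: the theorem is stated for all $K\ge 1$ while your argument assumes $K\le K_{\max}=\lceil\sqrt n\rceil$; for $K>K_{\max}$ one notes that the target rate reduces to $n^{-\beta/(2\beta+2)}$ and that $\widehat f_{(\lceil\sqrt n\rceil,j)}$ (which is in the family, and coincides with what Algorithm~\ref{alg:Alg_densities} would produce for any larger $K$ since $K'$ is then capped at $n^{1/(2\beta+2)}$) attains it, so the oracle term is still controlled.
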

Note that the result of Theorem \ref{th:adaptive_rate} holds for all $K\ge 1$ while the selection leading to $\widehat f^*$ is made only over the estimators $\widehat f_{(K,j)}$ with $K\le K_{\max}=\lceil\sqrt{n}\rceil$. This is because for $K> \lceil\sqrt{n}\rceil$ the estimators $\widehat f_{(K,j)}$ do not depend on $K$, cf. the definition of Algorithm \ref{alg:Alg_densities}, and they achieve the same rate as $\widehat f_{(\lceil\sqrt{n}\rceil,j)}$.

\section{Numerical experiments}

We present the results of numerical experiments on synthetic data. We have performed simulations with different values of parameters $d, n$ and the
number of components $K$. For the experiments, we
use the Python implementation of our algorithm\footnote{The code of Lowrankdensity algorithm is available at https://github.com/hi-paris/Lowrankdensity}.

Figures~\ref{fig:simul_discrete} and \ref{fig:simul_discrete_log} present numerical experiments with discrete distributions. We compare the total variation error of our estimator with that of the classical histogram estimator. 
In Figure~\ref{fig:simul_discrete}, we fix $K=1$ and $n=10^5$, and apply the two estimators on square matrices of size ranging from $d=10$ to $d=1600$. 
To better appreciate the dependency on $d$, we also represent the same experiment on a logarithmic scale in Figure~\ref{fig:simul_discrete_log}. 
We can see that the total variation error of the histogram estimator is approximately proportional to $d$, whereas the error of our estimator is approximately proportional to $\sqrt{d}$ for the ranges of values represented in this figure.


Figure~\ref{fig:simul_discrete_K_diric} presents the dependence of the total variation error on the rank $K$ for fixed dimension $d=100$ and fixed number of observations $n=10^5$. 
We also provide in Figure 4 a representation of this error on a logarithmic scale, which shows that it grows as $\sqrt{K}$. 
These two figures are obtained for low-rank matrices close to the set used in the lower bound. 


Finally, we provide simulations for the problem of density estimation (Figures~\ref{fig:simul_continuous} and~\ref{fig:simul_continuous_log}). 
We compare the standard histogram density estimator with bin width $n^{-1/4}$ and our estimator defined by Algorithm~\ref{alg:Alg_densities} with $\beta=1$, $K=1$. 
We let $n$ vary from $1000$ to $10^6$.
Again, we observe that our estimator performs better than the classical estimator and allows us to recover the one-dimensional estimation rate $n^{-1/3}$.

\vspace{2mm}

\begin{figure}
\begin{minipage}[b]{0.45\textwidth}
    \centering
    \includegraphics[width = 1\textwidth]{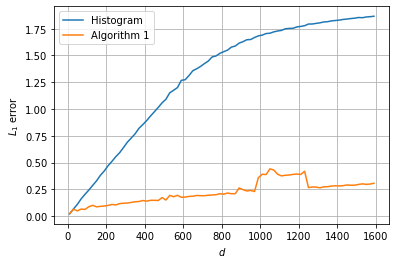}
    \caption{Total variation error of probability matrix estimators as a function of dimension $d$. 
    Here, $n=10^5$, $K=1$.
    \phantom{ }
    }
    \label{fig:simul_discrete}
\end{minipage}
\hfill 
\begin{minipage}[b]{0.45\textwidth}
    \centering
    \includegraphics[width = 1\textwidth]{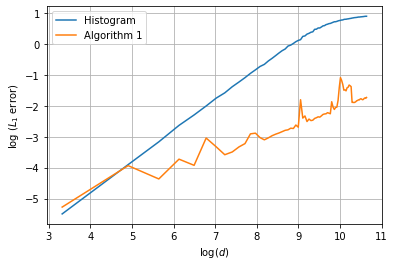}
    \caption{Total variation error of probability matrix estimators as a function of dimension $d$ (on a log-log scale). 
    Here, $n=10^5$, $K=1$.}
    \label{fig:simul_discrete_log}
\end{minipage}
\end{figure}

\begin{figure}

\begin{minipage}[b]{0.45\textwidth}
    \centering
    \includegraphics[width = 1\textwidth]{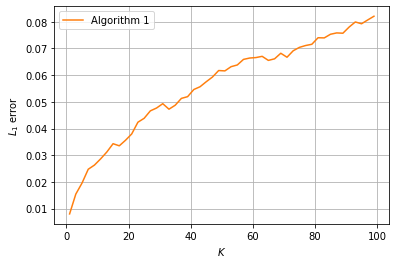}
    \caption{Total variation error of Algorithm~\ref{alg:Alg} as a function of rank $K$. Here, $d=100$ and $n=10^5$.}
    \label{fig:simul_discrete_K_diric}
\end{minipage}
\hfill
\begin{minipage}[b]{0.45\textwidth}
    \centering
    \includegraphics[width = 1\textwidth]{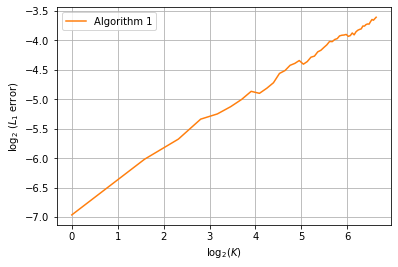}
    \caption{Total variation error of Algorithm~\ref{alg:Alg}  as a function of rank $K$ (on a log-log scale). Here, $d=100$, $n=10^5$.}
    \label{fig:simul_discrete_log_K_diric}
\end{minipage}
\end{figure}

\begin{figure}
\begin{minipage}[b]{0.45\textwidth}
    \centering
    \includegraphics[width = 1\textwidth]{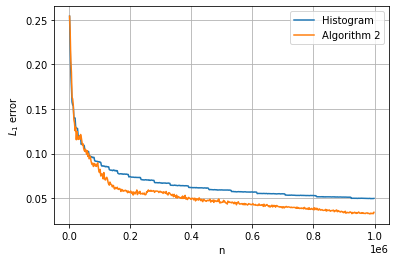}
    \caption{Total variation error of density estimators as a function of $n$ for $K=1$.}
    \label{fig:simul_continuous}
\end{minipage}
\hfill
\begin{minipage}[b]{0.45\textwidth}
    \centering
    \includegraphics[width = 1\textwidth]{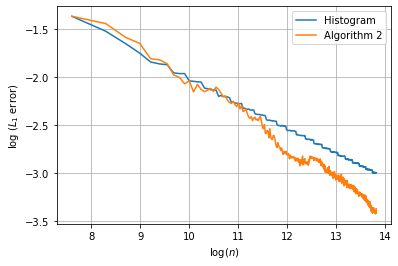}
    \caption{Total variation error of density estimators as a function of $n$ for $K=1$ (on a log-log scale).}
    \label{fig:simul_continuous_log}
\end{minipage}
\end{figure}

\section{Conclusion}

In this paper, we obtained minimax near-optimal estimators for the problem of multinomial estimation under low-rank matrix constraints and for density estimation under the generalized multi-view model. 
In both cases, we demonstrated that the rank constraint can substantially reduce the  worst-case estimation error. 
Our findings suggest that low-rank matrix model can be a powerful tool for dimension reduction in the context of density estimation. 
It would be interesting to extend our results to larger dimensions, in particular, to low-rank multinomial tensors, which  presumably needs developing different tools.


\section{Proofs}

\subsection{Upper bounds for discrete distributions}

\begin{proof}[Proof of Theorem~\ref{prop:UB}]
Let $\widehat{P}$ be the matrix defined in Algorithm \ref{alg:Alg}. 
The final estimator $\widehat{P}^*$ of Algorithm \ref{alg:Alg} is obtained from $\widehat{P}$ by the following transformation guaranteeing that $\widehat{P}^*$ is a probability matrix.
\begin{itemize}
    \item[(a)] If $\widehat P_{ij} \leq 0$ for all $(i,j) \in [d_1] \times [d_2]$ then $\widehat{P}^* = \frac{1}{2}(H^{(1)} + H^{(2)})$.
    \item[(b)] In the opposite case, if $\widehat P_{ij} >0$ for some $(i,j) \in [d_1] \times [d_2]$ then $\widehat{P}^* = \frac{\widehat P_+}{\|\widehat P_+\|_1}$, where $\widehat P_+ = \big(\widehat P_{ij} \vee 0 \big)_{ij}$.
\end{itemize}
    We first observe that it suffices to prove Theorem~\ref{prop:UB} for the estimator $\widehat{P}$ instead of $\widehat{P}^*$ since for any probability matrix $P$ the following inequality holds:  
\begin{equation}
 \label{reduction to probability}
\|P \!-\! \widehat P^*\|_1 \le 2  \|P \!-\! \widehat P\|_1.   
\end{equation}
Indeed, in case (a) this inequality is satisfied since  $\|\widehat{P}^* - P\|_1\leq 
\frac{1}{2}(\|H^{(1)}\|_1 + \|H^{(2)}\|_1)+\|P\|_1= 2$, while $\|\widehat{P} - P\|_1\geq \|P\|_1 =1$.
In case (b) we have
    \begin{align*}
    \big\|P \!-\! \widehat P^*\big\|_1 &\leq \big\|P - \widehat P_+\big\|_1 + \big\|\widehat P_+ \!-\! \widehat P^*\big\|_1 \leq \big\|P \!-\! \widehat P\big\|_1 + \bigg\|\widehat P_+\Big(1 \!-\! \frac{1}{\big\|\widehat P_+\big\|_1}\Big)\bigg\|_1\\
    & = \big\|P \!-\! \widehat P\big\|_1 +\left|\big\|\widehat P_+\big\|_1 \!-\!1 \right| \leq \big\|P \!-\! \widehat P\big\|_1 + \big\|P \!-\! \widehat P_+\big\|_1\\
    &\leq 2  \big\|P \!-\! \widehat P\big\|_1.
\end{align*}
Also, in both cases (a) and (b) we have $\|\widehat{P}^* - P\|_1\leq 2$. Therefore, to prove the theorem it suffices to show that  $\|\widehat P \!-\! P\|_1 \leq C_1 \sqrt{\frac{Kd}{n}}  \log (d) \,\log^{1/2}(N)$ with probability at least $1 -  C_0 (\log d)^2 dN^{-\alpha}$, where the constants $C_0,C_1>0$ depend only on $\alpha$. 
 
Consider first the case $n < \barc \alpha d \log ( N)$. Then Algorithm \ref{alg:Alg} outputs the estimator $\widehat P^* = \frac{1}{2}(\Yone + \Ytwo)$ satisfying $ \|\widehat P^* - P\|_1 \leq 2 = 2 \land C\sqrt{\frac{Kd}{n}} \log (d) \,\log^{1/2}(N)$ for a constant $C>0$ depending only on $\alpha$. Thus, the result of the theorem is granted if $n < \barc \alpha d \log ( N)$, and it remains to consider the case $n \geq \barc \alpha \, d \log ( N)$.

\medskip

For the rest of this proof, we assume that $n \geq \barc \alpha \, d \log ( N)$.
We recall the notation of Section~\ref{subsec:discrete_Estimator} and define the random event
\begin{align*}
    \mathcal{A} & = \left\{
    \begin{array}{lll}
          \|\hatLone_i\|_1 \geq \frac{1}{4} \|L_i\|_1 \ \text{ for all } i \in [d_1]\, \text{ such that } \|L_i\|_1 \geq \barc \,\alpha\frac{\log  N}{n} \\
         \text{and} \\
         \|\hatCone_j\|_1 \geq \frac{1}{4} \|C_j\|_1 \ \text{ for all } j \in [d_2]\, \text{ such that } \|C_j\|_1 \geq \barc\,\alpha\frac{\log  N}{n} 
    \end{array}
    \right\}. 
\end{align*} 
Let us evaluate the probability $\mathbb{P}(\mathcal{A})$. We will use the following lemma proved in Section \ref{sec:auxiliary}.

\begin{lemma}\label{lem_concent_bin}
    Let $Y \sim \M(P,n)$ be a multinomial $d_1\times d_2$ random matrix. 
    To any column $j \in [d_2]$, we associate a subset of row indices $V_j \subseteq [d_1]$ and a random variable $Z_j = \sum_{i \in V_j} Y_{ij}$. 
    We define $\lambda_j = \sum_{i\in V_j} P_{ij} = \frac{1}{n}\mathbb E(Z_j)$. 
    Let $\alpha>0$, $N>1$ be such that $n>\barc\alpha \log (N)$. 
    If $\lambda_j \in \big[\frac{\barc\alpha\log (N)}{n} ,1\big)$ for any $j\in J$, where $J\subseteq [d_2]$, then
    $$
    \mathbb{P}\left(\forall j \in J: \frac{Z_j}{n}\ge \frac{\lambda_j}{4}\right)
    \geq 1- \frac{|J|+1}{N^\alpha}.
    $$
\end{lemma}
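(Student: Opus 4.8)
The plan is to reduce the statement to a single binomial lower-tail estimate applied columnwise, followed by a union bound over $j\in J$.

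First I would record the distributional fact underlying everything: by the aggregation property of the multinomial distribution, for each fixed $j$ the variable $Z_j=\sum_{i\in V_j}Y_{ij}$ counts how many of the $n$ trials fall into the ``super-cell'' $\{(i,j):i\in V_j\}$, whose total probability equals $\lambda_j=\sum_{i\in V_j}P_{ij}$. Hence $Z_j\sim\Bin(n,\lambda_j)$ with mean $\mu_j:=\E(Z_j)=n\lambda_j$. The variables $(Z_j)_{j\in J}$ are in general dependent, but this is irrelevant since only a union bound is used.

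Second, for each $j\in J$ I would apply the multiplicative Chernoff bound for the lower tail of a binomial: for $X\sim\Bin(n,p)$ with mean $\mu=np$ and any $\delta\in(0,1)$,
\[
\mathbb{P}\big(X\le(1-\delta)\mu\big)\le\exp\!\big(-\delta^2\mu/2\big).
\]
Choosing $\delta=3/4$ gives $\mathbb{P}(Z_j\le\mu_j/4)\le\exp(-9\mu_j/32)$. Since $\lambda_j\ge\barc\,\alpha\frac{\log N}{n}$, we have $\mu_j=n\lambda_j\ge\barc\,\alpha\log N=14\alpha\log N$, so
\[
\mathbb{P}\big(Z_j/n\le\lambda_j/4\big)\le\exp\!\big(-\tfrac{9}{32}\cdot 14\,\alpha\log N\big)=N^{-63\alpha/16}\le N^{-\alpha},
\]
the last step using $N>1$ and $63/16>1$. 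This is the only place the (generous) value $\barc=14$ enters; any constant exceeding $32/9$ would suffice.

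Third, a union bound over $j\in J$ gives
\[
\mathbb{P}\Big(\exists\,j\in J:\ Z_j/n<\lambda_j/4\Big)\le\sum_{j\in J}\mathbb{P}\big(Z_j/n<\lambda_j/4\big)\le|J|\,N^{-\alpha}\le\frac{|J|+1}{N^\alpha},
\]
and passing to the complement yields the claim; the superfluous $+1$ is harmless slack convenient downstream. The main point requiring care is step one---arguing that $Z_j$ is \emph{exactly} $\Bin(n,\lambda_j)$ rather than merely stochastically dominated---together with checking that $\barc=14$ is large enough for the chosen $\delta$ to make the per-column failure probability at most $N^{-\alpha}$; otherwise the argument is routine. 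If one wishes to bypass the multiplicative Chernoff bound, a Bernstein inequality for sums of Bernoulli variables gives the same conclusion with the same comfortable margin.
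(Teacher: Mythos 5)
Your proof is correct, and it takes a genuinely different route from the paper's. The paper's proof treats the dependence among the summands $Y_{ij}$, $i\in V_j$, as the central obstacle and resolves it by Poissonization: it draws $\widetilde n\sim\Poi(n/2)$, uses the fact that under $\M(P,\widetilde n)$ the entries become independent Poissons so that $\widetilde Z_j\sim\Poi(n\lambda_j/2)$, applies the Poisson lower-tail inequality, and then de-Poissonizes via conditioning on $\widetilde n=k$ and stochastic dominance; the extra $+1$ in $(|J|+1)/N^{\alpha}$ is exactly the price of the event $\{\widetilde n>n\}$. Your observation that the lumping property of the multinomial makes $Z_j$ \emph{exactly} $\Bin(n,\lambda_j)$ — i.e., that one never needs the joint distribution of the summands, only the marginal law of their sum — short-circuits all of this: the multiplicative Chernoff bound with $\delta=3/4$ gives $\mathbb{P}(Z_j\le n\lambda_j/4)\le\exp(-9n\lambda_j/32)\le N^{-63\alpha/16}\le N^{-\alpha}$, and the union bound finishes. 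Your route is shorter, avoids the auxiliary Poisson machinery entirely, does not need the $+1$ slack, and has a more comfortable constant (any $\barc>32/9$ works, versus the paper's margin of roughly $3.5(1-\log 2)\approx 1.07$ over the exponent $\alpha$, which is what forces $\barc=14$ there). The only point genuinely requiring care in your argument — that $Z_j$ is exactly binomial rather than merely dominated — is the standard aggregation property of the multinomial, and you state it correctly. Both proofs are valid; yours is the more elementary one for this particular lemma, though the Poissonization template the authors use is reusable in settings where one truly needs joint independence across cells.
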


By applying Lemma~\ref{lem_concent_bin} with  $V_j=[d_1]$ for all $j$, $Z_j = n \|\hatCone_j\|_1$, $\lambda_j=\|C_j\|_1=\sum_{i=1}^{d_1} P_{ij}$, and $J=\left\{j\in [d_2]:\, \|C_j\|_1 \geq \barc\,\alpha\frac{\log  N}{n} \right\}$ we obtain
\begin{align*}
\mathbb{P}\left(\|\hatCone_j\|_1 \geq \frac{1}{4} \|C_j\|_1 \ \text{ for all } j \in [d_2]\, \text{ such that } \|C_j\|_1 \geq \barc\,\alpha\frac{\log  N}{n} \right)\ge 1 - \frac{d_2+1}{N}^{\alpha}.
\end{align*}
Quite analogously,
\begin{align*}
\mathbb{P}\left(\|\hatLone_i\|_1 \geq \frac{1}{4} \|L_i\|_1 \ \text{ for all } i \in [d_1]\, \text{ such that } \|L_i\|_1 \geq \barc\,\alpha\frac{\log N}{n} \right)\ge 1 - \frac{d_1+1}{ N}^{\alpha}.
\end{align*}
Therefore, since $d=d_1\vee d_2\ge 2$,
\begin{equation}\label{eq:proba_A1}
    \mathbb{P}(\mathcal{A}) \ge 1 - \frac{d_1+d_2+2}{N}^{\alpha}\geq 1 - 3\,d N^{-\alpha}. 
\end{equation}
Recall that $T= \lfloor\log_2(d)\rfloor-1$ and note that, for $n \geq \barc \alpha \, d \log (N)$,
\begin{align*}
    \forall t \in \{0,\dots, T\!+\!1\}: \frac{1}{2^{t-1}} \geq \frac{1}{d} \geq \barc \alpha \frac{\log  N}{n}.
\end{align*}
Fix $k = (t,t') \in \{0,\dots, T\!+\!1\}^2$ and $(i,j) \in U_k$.
Recalling the definition of  $U_k$ in \eqref{def_U} we get $\|\hatLone_i\|_1 \leq 2^{-t}$, so that on the event $\mathcal{A}$ we have 
$\|L_i\|_1 \leq  2^{2-t} \lor  \barc \alpha \frac{\log  N}{n} = 2^{2-t}$, and similarly, $\|C_j\|_1 \leq  2^{2-t'}$. 
By definition of $U_k$, we therefore have $\normsquare{\Pk} \leq 2^{2-t\land t'}$ on the event $\mathcal{A}$. 
Moreover, $\Pk$ has rank at most $K$. 

For any fixed $\Yone\in \mathcal{A}$, we apply Lemma~\ref{prop:mult_noise_2} with  $Q = \Pk$ and $W_Q = W^{(k)}:=\Mk-\Pk$. This yields that, with probability at least $1-{2d} N^{-\alpha}$, we have $\|W^{(k)}\|^2 \leq 9 \max\Big\{ \normsquare{\Pk}\frac{\alpha \log  N}{n} , \Big(\frac{\alpha\log N}{n}\Big)^2\Big\} \leq 9 \alpha\frac{\log  N}{n}2^{2-t\land t'}$ for any fixed $\Yone\in \mathcal{A}$. By the definition of Algorithm~\ref{alg:Alg}, 
\begin{align*}
    \hatPk = \arg \min_{A \in \R^{d_1\times d_2}} \|\Mk - A\|_F^2 + \tau_k \|A\|_*,
\end{align*}
where $\tau_k = 6\sqrt{\alpha \frac{\log N}{n}2^{2-t\land t'}}$. 
Standard deterministic guarantees for nuclear norm penalized estimators (see, for example, \cite[Lemma 6.9]{giraud2021introduction}) imply that if $\|W^{(k)}\| \leq \tau_k/2$ and $\Pk$ has rank at most $K$, then there exists an absolute constant $c^*>0$ such that $\|\hatPk - \Pk\|_F^2 \leq c^* K \tau_k^2 $. Therefore, we have
\begin{align*}
    \|\hatPk - \Pk\|_F^2 & 
    \leq \frac{C}{n}\frac{K \log  N}{2^{t\land t'-2}},
\end{align*}
where the constant $C>0$ depends only on $\alpha$ and this inequality holds with probability at least $1-2d  N^{-\alpha}$ for any fixed $\Yone\in \mathcal{A}$.

Now, note that since the entries of $\Yone$ are non-negative and sum to $1$ the definitions  of $I_t$ and $J_t$ in \eqref{def_I} - \eqref{def_J_infty} imply that $|I_t| \leq 2^{t+1} \land d$ and $|J_{t'}| \leq 2^{t'+1} \land d$. Thus, for $U_k$ defined in \eqref{def_U} we have  $|U_k| \leq \left(2^{t+1} \land d\right)(2^{t'+1} \land d)$. We use this bound to link the $\ell_1$-norm of $\hatPk - \Pk$  to its Frobenius norm:
\begin{align*}
    \|\hatPk - \Pk\|_1 &\leq \sqrt{|U_k|}\|\hatPk - \Pk\|_F \leq \sqrt{2\left(2^t \land d\right)\left(2^{t'} \land d\right)} \sqrt{\frac{C}{n}\frac{K \log  N}{2^{t\land t'-2}}} \nonumber \\
    & \le \sqrt{(2^{t\lor t'} \land d)\frac{8 C K \log  N}{n}}.
\end{align*}
Using the union bound we get that, for any fixed $\Yone\in \mathcal{A}$, the estimator
$
    \widehat P = \sum_{k \in \{0,\dots, T\!+\!1\}^2} \hatPk
$
is such that, with probability at least $1-2(T+2)^2 d  N^{-\alpha}$,
\begin{align*}
    \|\widehat P - P\|_1 &= \sum_{k \in \{0,\dots, T\!+\!1\}^2} \|\hatPk - \Pk\|_1 
     \leq  \sqrt{\frac{8 C K \log  N}{n}} \sum_{(t,t') \in \{0,\dots, T\!+\!1\}^2}  2^{(t\lor t')/2}.
\end{align*}
In view of \eqref{eq:proba_A1}, this bound holds with probability over the joint distribution of $(\Yone,\Ytwo)$, which is at least $1 -(2(T+2)^2 + 3)d  N^{-\alpha}\ge 1-C_0 (\log d)^2d  N^{-\alpha}$
for an absolute constant $C_0>0$. Note that
\begin{align*}
    &\sum_{(t,t') \in \{0,\dots, T\!+\!1\}^2}  2^{(t\lor t')/2}  \le 2 \sum_{(t,t') \in \{0,\dots, T\!+\!1\}^2: t\ge t'}  2^{t/2} = 2\sum_{t=0}^{T+1}{(t+1)} \,2^{t/2}
   \\
 &\qquad \le  2\int_0^{T+1} ( x+1)\, 2^{x/2} dx = 2 \Big(\frac{2}{\log 2}\Big)^2  [2^{(T+1)/2}\Big(\frac{\log 2}{2}(T+1)-1\Big){+1}]+ 2\frac{2}{\log 2}\Big(2^{(T+1)/2}-1\Big)
    \\
   &\qquad  
   = 2^{(T+1)/2} \left(\frac{4}{\log 2}(T+1)+\frac{4}{\log 2} - \frac{8}{(\log 2)^2} \right) - \left(\frac{4}{\log 2} - \frac{8}{(\log 2)^2} \right) \le \frac{4\sqrt{d}\log_2(d)}{\log 2}.
\end{align*}
It follows that $\|\widehat P - P\|_1 \le C_1 \sqrt{\frac{Kd}{n}} \log(d)  \log^{1/2}(N)$ with probability at least
$1-C_0 (\log d)^2d  N^{-\alpha}$, where $C_0>0$ is an absolute constant and $C_1>0$ depends only on $\alpha$.
\end{proof}

\begin{proof}[Proof of Theorem \ref{th:upper discrete in expectation}] 
We follow the same argument as in the proof of Theorem~\ref{prop:UB} with the only difference that now we set $N=d\vee n$ in Lemmas~\ref{lem_concent_bin} and~\ref{prop:mult_noise_2}.  This leads to the bound
\begin{align*}
    \|\widehat P^* - P\|_1 &
    \leq C_1\sqrt{\frac{Kd}{n}} \log^{3/2}(d\vee n), 
\end{align*}
 which holds with probability at least $1 - C_0 (\log (d\vee n))^2d(d\vee n)^{-\alpha}$. Therefore, since $\|\widehat P^* - P\|_1\le 2$ and $\alpha>3/2$ we have
 \begin{align*}
    \E_P\|\widehat P^* - P\|_1 &
    \leq C_1\sqrt{\frac{Kd}{n}} \log^{3/2}(d\vee n) + 2C_0 (\log (d\vee n))^2 n^{1-\alpha} \le C\sqrt{\frac{Kd}{n}} \log^{3/2}(d\vee n)
\end{align*}
 for some constant $C>0$ depending only on $\alpha$. Noticing that $\sqrt{\frac{Kd}{n}} \log^{3/2}(d\vee n)\wedge 1= \sqrt{\frac{Kd}{n}} \log^{3/2}(n)\wedge 1$ for $d,n\ge 2$ completes the proof.
\end{proof}


\subsection{Lower bounds for discrete distributions} \label{sec:LB_discrete}

The aim of this subsection is to prove Theorems~\ref{th:LB_general} and \ref{prop:LB}.  Note that it suffices to prove Theorem~\ref{prop:LB}. Indeed,
Theorem~\ref{th:LB_general} is obtained as a corollary of Theorem~\ref{prop:LB} by taking $K=d_1=1$ and $d_2=D$. 

\medskip

\noindent {\it Proof of Theorem~\ref{prop:LB}.} We first prove the lower bound in expectation \eqref{eq:th:LBdiscr2} and then combine it with Lemma \ref{lem:reduction of lower bound} (see Section \ref{sec:auxiliary}) to deduce the bound in probability \eqref{eq:th:LBdiscr1}. 

\begin{proof}[Proof of \eqref{eq:th:LBdiscr2}] Note first that the result is trivial if $d_1=d_2=1$. Therefore, assume that $d_2\ge2$ 
and, without loss of generality, $d_2 \geq d_1$. Set $D_2 = \lfloor d_2/2 \rfloor$ and $D = 2 K D_2$. Define $\gamma = \mbox{\fontsize{13pt}{12pt}\(\left(\frac{1}{4\sqrt{n D}}\land \frac{1}{2D}\right)\)}$. 
For any $\epsilon=(\epsilon_{ij}) \in \{-1,1\}^{K \times D_2}$, define a $d_1\times d_2$ matrix $P_\epsilon$ with the following entries:
\begin{equation}\label{def_perturbed_mat_expect}
    \forall (i,j) \in \big[d_1\big] \times \big[d_2\big]: ~ P_\epsilon(i,j) =  \begin{cases}
    \mbox{\fontsize{13pt}{12pt}\(\frac{1}{D}\)} + \epsilon_{ij}\gamma & \text{ if } i \leq K \text{ and } j \leq D_2, \vspace{1mm}\\  \mbox{\fontsize{13pt}{12pt}\(\frac{1}{D}\)} - \epsilon_{i(j-D_2)} \gamma & \text{ if } i \leq K \text{ and } D_2 < j \le 2 D_2, \\  0 & \text{ otherwise.}\end{cases}
\end{equation}
Consider the set of $d_1\times d_2$ matrices 
\begin{equation}\label{Hypotheses_LB_expectation}
\mathcal{P}: = \left\{P_\epsilon ~\big|~ \epsilon \in \{-1,1\}^{K \times D_2} \right\}.
\end{equation}
This set consists of $2^m$ matrices, where $m = KD_2$. Note also that $\mathcal{P}\subset \mathcal{T}_K$. Indeed,  all matrices $P \in \mathcal{P}$ are of rank $K$ and have non-negative entries summing up to $1$. 
For any $\epsilon, \epsilon' \in \{-1,1\}^{K \times D_2}$ we have
\begin{align}\label{TV_Hamming}
     \|P_\epsilon - P_{\epsilon'}\|_1 = 2\gamma \rho(\epsilon, \epsilon'), 
\end{align}
where $\rho(\epsilon,\epsilon') = \sum\limits_{i=1}^K \sum\limits_{j=1}^{D_2} \mathbbm{1}_{\epsilon_{i,j} \neq \epsilon'_{i,j}}$ denotes the Hamming distance between $\epsilon$ and $\epsilon'$. 

We now apply Assouad's lemma (see Theorem 2.12$(iv)$ in \cite{tsybakov2009introduction}). Let 
$\epsilon, \epsilon' \in \{-1,1\}^{K \times D_2}$ be such that $\rho(\epsilon,\epsilon') = 1$. Denote by $(i_0, j_0)$, where $i_0 \in [K]$ and $j_0 \in [D_2]$, the unique pair 
of indices such that $\epsilon_{i_0, j_0} = -\epsilon'_{i_0, j_0}$. 
Then the $\chi^2$-divergence between $P_\epsilon$ and $P_{\epsilon'}$ satisfies
\begin{align*}
    \chi^2\left(P_\epsilon, P_{\epsilon'}\right) &= \sum_{i=1}^K \sum_{j=1}^{2D_2} \frac{\left(P_{\epsilon}(i,j) - P_{\epsilon'}(i,j)\right)^2}{P_{\epsilon'}(i,j)} \\
    &= \frac{\left(P_{\epsilon}(i_0,j_0) - P_{\epsilon'}(i_0,j_0)\right)^2}{P_{\epsilon'}(i_0,j_0)} + \frac{\left(P_{\epsilon}(i_0,j_0+D_2) - P_{\epsilon'}(i_0,j_0+D_2)\right)^2}{P_{\epsilon'}(i_0,j_0+D_2)} \\
    & \leq  \frac{8\gamma^2}{\frac{1}{D} - \gamma}  \leq 16 \gamma^2 D ~~ \Big(\text{since} \ \gamma \leq \frac{1}{2D}\Big)\\
    & = \frac{1}{n} \land \frac{4}{D} \leq  \frac{1}{n}.
\end{align*}
By Lemma 2.7 in \cite{tsybakov2009introduction}, the $\chi^2$-divergence between the corresponding product measures satisfies 
\begin{align}\label{TV_leq_chi2}
  \chi^2\left(P_\epsilon^{\otimes n}, P_{\epsilon'}^{\otimes n}\right)  = \left(1+\chi^2\left(P_\epsilon, P_{\epsilon'}\right)\right)^n -1 \le  e-1
\end{align}
for all $\epsilon, \epsilon' \in \{-1,1\}^{K \times D_2}$ such that $\rho(\epsilon,\epsilon') = 1$.
Taking into account \eqref{TV_Hamming}, \eqref{TV_leq_chi2}, arguing as in  \cite[Example 2.2]{tsybakov2009introduction} we obtain and applying \cite[Theorem 2.12$(iv)$]{tsybakov2009introduction} we obtain
\begin{align}\nonumber
\inf\limits_{\widetilde P} \max\limits_{P \in \mathcal{P}} \E_P \|\widetilde P - P\|_1 &\geq 
\frac{\gamma m}{4} \exp(1-e) = \frac{D}{8} \exp(1-e) \left(\frac{1}{4\sqrt{n D}}\land \frac{1}{2D}\right)
\\
\label{eq:proof-lower-0}
& \ge \frac{\exp(1-e)}{32} \Big\{ \sqrt{\frac{D}{n}} \land  1 \Big\}
\ge c \Big\{ \sqrt{\frac{Kd_2}{n}} \land  1 \Big\},
\end{align}
where $c>0$ is an absolute constant. This proves \eqref{eq:th:LBdiscr2}.
\end{proof}

\begin{proof}[Proof of \eqref{eq:th:LBdiscr1}]
We apply Lemma \ref{lem:reduction of lower bound}, where we take $\mathcal{P}_0$ as the set of all $d_1\times d_2$ matrices, $\mathcal P$ as the set of matrices defined in \eqref{Hypotheses_LB_expectation}, and we consider the metric $v(P,P')=\|P - P'\|_1$. Notice that, under these definitions, assumption \eqref{eq:lem:reduction of lower bound1} of Lemma \ref{lem:reduction of lower bound} is satisfied with $U = \left[\frac{1}{D} \,\mathbbm{1}_{i \leq K, j\le 2D_2 }\right]_{i,j}$ and $s=\gamma D=\mbox{\fontsize{13pt}{12pt}\(\Big(\frac14\sqrt{\frac{D}{n}}\land \frac{1}{2}\Big)\)}$, where $D,D_2$ are defined in the proof of \eqref{eq:th:LBdiscr2}. Moreover, due to \eqref{eq:proof-lower-0} there exists an absolute constant $a>0$ such that assumption \eqref{eq:lem:reduction of lower bound2} of Lemma \ref{lem:reduction of lower bound} is satisfied with the same $s$. Thus, we can apply Lemma~\ref{lem:reduction of lower bound}, which yields the desired lower bound in probability.
\end{proof}


 \subsection{Upper bounds for continuous distributions}

\begin{proof}[Proof of Theorem~\ref{th:upper bound-density estimation}]

 Recall that in Algorithm~\ref{alg:Alg_densities} we assume that $n$ is a multiple of $4$. Note first that it suffices to consider the case $\beta > -\frac{2 \log((8L)^{-3/2} \land 1)}{ \log(n)}$. Indeed, if the interval $(0, -\frac{2 \log((8L)^{-3/2} \land 1)}{ \log(n)}]$ is non-empty and $\beta$ belongs to this interval then
the desired rate from equation~\eqref{eq1:th:upper bound-density estimation} satisfies, for $n\ge4$,
\begin{align*}
    &\left(\frac{K}{n}\right)^{\beta/(2\beta+1)} \log^{3/2}\! n \land n^{-\frac{\beta}{2\beta+2}} 
    \geq \left(\frac{1}{n}\right)^{\beta} \log^{3/2}\! n \land n^{-\frac{\beta}{2}} \\
    & \qquad \geq  ((8L)^{-3/2} \land 1)^2 \log^{3/2}4 \land ((8L)^{-3/2} \land 1) 
     =: a(L).
\end{align*}
On the other hand, since  $\widehat f$ is a probability density we have 
the trivial bound $\|f - \widehat f\|_{L_1}\le 2$ for all probability densities $f$.
Thus, we immediately get~\eqref{eq1:th:upper bound-density estimation} with any $C'_1 > 2/a(L)$.

For the rest of this proof, we assume that $\beta>-\frac{2 \log((8L)^{-3/2} \land 1)}{ \log(n)}$.

  Fix a density $f \in \mathcal G_K$, and consider the marginal densities $g_1:[0,1] \longrightarrow \R$ and $g_2:[0,1] \longrightarrow \R$ defined by 
$$g_1(x) = \int_0^1 f(x,y) \, dy \quad \text{ and } \quad g_2(y) = \int_0^1 f(x,y) \, dx, \qquad \forall x,y \in [0,1].$$
The functions $g_1$ and $g_2$ are $\beta$-H\"older on $[r_1,R_1]$ and $[r_2,R_2]$, respectively. They are 
densities of random variables $\Pi_1(X_i)$ and $\Pi_2(X_i)$, respectively, where $\Pi_j(\cdot)$ denotes the projector onto the $j$-th coordinate. Let $q_-^{(j)}$ and $q_+^{(j)}$ be the quantiles  of order $n^{-1/3}$ and $1-n^{-1/3}$ of the probability measure induced by $g_j$:
    $$ \int_{-\infty}^{q_-^{(j)}}g_j = n^{-1/3} \quad \text{ and } \quad \int_{q_+^{(j)}}^{+\infty}g_j = n^{-1/3}. $$
Since $f \in \mathcal G_K$ it follows from the definition in~\eqref{eq:def_L} that there exist real numbers $r_1, r_2, R_1, R_2 \in [0,1]$ (depending on $f$) such that $\Delta_j:=R_j-r_j>0$, $j=1,2$, and 
\begin{align*}
    \begin{cases} 
    \Supp(f) = [r_1,R_1] \times [r_2,R_2],\\
    f \text{ is $\beta$-H\"older over } \Supp(f),\\
    \int_{\Supp(f)} f = 1 \text{ and } f \geq 0.    \end{cases}
\end{align*}
For $j=1,2$, let $\widehat g_j$ denote the output of Alg~\ref{alg:Alg_densities_1D}($\Pi_j(X_{n/2+1}),\dots, \Pi_j(X_n),  K')$. Let $\widehat r_j$ and $\widehat R_j$ be the estimators of~$r_j$ and $R_j$ defined in Algorithm~\ref{alg:Alg_densities}.
For the rest of this proof, we place ourselves on the event $\mathcal{E} \cap \mathcal{F}$, where
\begin{align*}
    \mathcal{E} &= \left\{\widehat r_j < q_-^{(j)} < q_+^{(j)} < \widehat R_j \ \text{for} \ j=1,2\right\},\\[5pt]
    \mathcal{F} &= \left\{\|g_j - \widehat g_j\|_{L_1} \leq C( K'/n)^{\beta/(2\beta+1)} \ \text{for} \ j=1,2\right\}.
\end{align*}
Here, $C>0$ is the constant from Lemma~\ref{lem:density_1D} and {$K' = K $ if $\big(\frac{K}{n}\big)^{\frac{\beta}{2\beta+1}} \log^{3/2}(n) \leq n^{-\frac{\beta}{2\beta+2}}$, otherwise  $K' = n^{\frac{1}{2\beta+2}} $. }
By Lemma~\ref{lem:density_1D}, if $n\ge 64$ and $ K'/n\le (8L)^{-3/2}\wedge 1$ then $\mathbb P(\mathcal{F}) \geq 1 - 10\exp\big(\!-n^{1/3}\big)$, and $\mathbb P(\mathcal{E}) \geq 1-4\exp(-n^{2/3}/2)$, so that $\mathbb P(\mathcal{E} \cap \mathcal{F}) \geq 1 - 14 \exp\big(\!-n^{1/3}\big)$. 
Here, the condition $K'/n \le (8L)^{-3/2}\wedge 1$ is satisfied because, by the definition of $K'$, we have  $K' \leq n^{\frac{1}{2\beta+2}} \leq n^{1/2} \le n ( (8L)^{-3/2}\wedge 1)$, where the last inequality is due to the fact that $1\ge \beta>-\frac{2 \log((8L)^{-3/2} \land 1)}{\log(n)}$.

\vspace{1mm}

Let $\widehat \Delta_j=\widehat R_j-\widehat r_j$, $j=1,2$.  We distinguish between the following two cases.
\vspace{1mm}

\textbf{First case:} $\widehat \Delta_1 \land \widehat\Delta_2 \leq h^*$. 
It suffices to assume that $\widehat \Delta_1 \leq h^*$ since the case $\widehat \Delta_2 \leq h^*$ is treated in the same way.
If $\widehat \Delta_1 \leq h^*$ the estimator $\widehat \phi$ in Algorithm~\ref{alg:Alg_densities} has the form $\widehat \phi (x,y) = \frac{1}{\widehat \Delta_1}\mathbbm{1}_{x \in [\widehat r_1, \widehat R_1]} \widehat g_2(y)$ and we get  
\begin{align*}
    \|f - \widehat \phi\|_{L_1} &\leq \int_{y=0}^1 \int_{x=r_1}^{R_1} \left|f(x,y) - \frac{g_2(y)}{\Delta_1}\right| dx dy + \int_{y=0}^1 \int_{x=r_1}^{R_1}  \left|\frac{g_2(y)}{\Delta_1} - \widehat \phi(x,y)\right| dx dy
    \\[5pt]
    & \leq \int_{y=0}^1 \int_{x=r_1}^{R_1}  \Bigg\{\frac{1}{\Delta_1}\int_{r_1}^{R_1} \underbrace{\left|f(x,y) - f(x',y)\right|}_{\leq L\Delta_1^{\beta}}dx'\Bigg\} dx dy + \int_{y=0}^1 \int_{x=r_1}^{R_1}  \left|\frac{g_2(y)}{\Delta_1} - \frac{\widehat g_2(y)}{\widehat \Delta_1}\mathbbm{1}_{x\in[\widehat r_1, \widehat R_1]} \right| dx dy
    \\[5pt]
    & \leq \int_{y=0}^1 \int_{x=r_1}^{R_1}L\Delta_1^{\beta} \, dxdy ~ 
    + \int_{y=0}^1 \left|g_2(y)\frac{\widehat \Delta_1}{\Delta_1} - \widehat g_2(y) \right| dy 
    + \int_{y=0}^1 \int_{x \in [r_1,R_1]\setminus [\widehat r_1,\widehat R_1]} \frac{g_2(y)}{\Delta_1} dxdy
    \\[5pt]
    & \leq L\Delta_1^{\beta+1} + \|g_2 - \widehat g_2\|_{L_1} + \|\widehat g_2 \|_{L_1} \left|1-\frac{\widehat \Delta_1}{\Delta_1} \right| + \|g_2\|_{L_1} \frac{\Delta_1 - \widehat \Delta_1}{\Delta_1}
    \\[5pt]
    & \leq L(h^*)^{\beta+1}\Big(1+ 16 n^{-1/(2\beta+1)}\Big)^{\beta+1} + C\Big(\frac{ K'}{n}\Big)^{\beta/(2\beta+1)} + 32 n^{-1/(2\beta+1)}
    \\[5pt]
    & \leq C'\Big(\frac{K'}{n}\Big)^{\frac{\beta}{2\beta+1}}\\
    & = C' \begin{cases}
        (K/n)^{\frac{\beta}{2\beta+1}} & \text{ if } (K/n)^{\frac{\beta}{2\beta+1}} \log^{3/2}(n) \leq n^{-\frac{\beta}{2\beta+2}}\\[5pt]
       n^{-\frac{\beta}{2\beta+2}}& \text{ otherwise}
    \end{cases} \\[5pt]
    &  \leq C'\Big(\frac{K}{n}\Big)^{\beta/(2\beta+1)} \log^{3/2}(n) \land n^{-\frac{\beta}{2\beta+2}}.
\end{align*}
where the constant $C'>0$ depends only on $L$. Here, we have used the facts that, by Lemma~\ref{lem:density_1D}, if $\mathcal E$ holds and $\widehat \Delta_1 \leq h^*$, then $\Delta_1 - \widehat \Delta_1 \leq 16\widehat \Delta_1 n^{-1/(2\beta+1)} \leq 16 \Delta_1 n^{-1/(2\beta+1)}$  and on the event $\mathcal{F}$ we have $\|g_2 - \widehat g_2\|_{L_1} \leq C ( K'/n)^{\beta/(2\beta+1)}$. We conclude that, in the first case, the bound \eqref{eq1:th:upper bound-density estimation} of Theorem \ref{th:upper bound-density estimation} is satisfied. 

\vspace{2mm}

\textbf{Second case:} $\widehat \Delta_1 > h^*$ and $\widehat \Delta_2>h^*$. 
In this case the estimator $\widehat \phi$ in Algorithm~\ref{alg:Alg_densities} has the form
$$
\widehat\phi (x,y) = 
    \dfrac{1}{ h_1 h_2}\displaystyle\sum_{j\in E_1} \displaystyle\sum_{j'\in E_2}  \widehat P^*_{jj'} \,\mathbbm{1}_{(x,y)\in C_{jj'}}. 
$$
Recalling the notation of Algorithm~\ref{alg:Alg_densities} we have
\begin{align}\label{eq:b}
    b := |E_1| \lor |E_2| \le 1 + 2\big\lceil h_1^{-1} \big \rceil \lor 2\big\lceil h_2^{-1} \big \rceil \leq C(n/K')^{1/(2\beta+1)}, 
\end{align}
where $C>0$ is an absolute constant.
Since {$f \in \mathcal G_{K,\beta}$} we have the representation $f(x,y)= \sum_{k=1}^K  u_k(x) v_k(y)$ with some functions $u_k \in L_1[0,1]$, $v_k\in L_1[0,1]$ for~$k \in [K]$. Introduce the matrix $P = (P_{ij})_{i\in E_1,j\in E_2}$ with entries 
$$ 
P_{ij} = \int_{C_{ij}} f(x,y) dx dy = \sum_{k=1}^K \int_{A_i} u_k(x) dx \int_{B_j} v_k(y) dy = \sum_{k=1}^K U_k(i) V_k(j), \qquad  (i,j) \in E_1\!\times\! E_2, 
$$
 where  $U_k(i) = \int_{A_i} u_k(x) dx$ and $V_k(j) = \int_{B_j} v_k(y) dy$ for any $(i,j) \in E_1\!\times\! E_2$ and $k \in [K]$. Set $U_k = (U_k(i))_{i \in E_1}$, $V_k = (V_k(j))_{j \in E_2}$. Then we can write
$$
P = \sum_{k=1}^K U_k V_k^\top.
$$ 
Matrix $P$ has rank at most $K$. Consider now
the histogram matrices $G= (G_{ij})_{i \in E_1, j\in E_2}$ and $G' =(G'_{ij})_{i \in E_1, j\in E_2}$ defined in Algorithm~\ref{alg:Alg_densities} with entries 
$$
G_{ij} = \frac{4}{n}\sum_{\ell=n/2+1}^{3n/4} \mathbbm{1}_{X_\ell \in C_{ij}} \quad \text{ and } \quad G'_{ij} = \frac{4}{n}\sum_{\ell=3n/4+1}^n \mathbbm{1}_{X_\ell \in C_{ij}}, \qquad (i,j) \in E_1\!\times\! E_2.$$
The matrices $G$ and $G'$ are mutually independent, and both $nG/4$ and $nG'/4$ follow the multinomial distribution~$\mathcal{M}(P,n/4)$. 

To alleviate the notation, we define the following two quantities
\begin{align*}
    \psi_{\text{low-rank}} = (K/n)^{\frac{\beta}{2\beta+1}} \log^{3/2}(n) \quad \text{ and } \quad \psi_{2D} =  n^{-\frac{\beta}{2\beta+2}}.
\end{align*}
By the definition of Algorithm \ref{alg:Alg_densities}, matrix 
$\widehat P^*$ is the output of Alg\ref{alg:Alg}($\alpha, b, n,  \frac{n}{4}, G, G'$) with $b=|E_1| \lor |E_2|$, and $\alpha>1$ if $\psi_{\text{low-rank}} \leq \psi_{2D}$ and  $
    \widehat P^* = (G+G')/2$ if $\psi_{\text{low-rank}} > \psi_{2D}.$
Therefore, if $\psi_{\text{low-rank}} \leq \psi_{2D}$, then Theorem~\ref{prop:UB} implies that, for some constants $C>0$ depending only on $\alpha$,  
\begin{align*}%
    \|\widehat P^* - P\|_1 & \leq C \sqrt{\frac{Kb}{n}} \log(b) \log^{1/2}(n)
     \leq C\left(\frac{K}{n}\right)^{\beta/(2\beta+1)} (\log n)^{3/2}
\end{align*}
with probability at least $1- C_0(\log b)^2b n^{-\alpha}$, where $C_0>0$ depends only on $\alpha$. 

 If $\psi_{\text{low-rank}} > \psi_{2D}$, then \eqref{eq:b} implies that $b \leq Cn^{\frac{1}{2\beta+2}}$. Using this fact and Lemma~\ref{prop:samp_comp_TV_discrete} we obtain that, conditionally on $\mathcal{D}_1 = \{X_1,\dots,X_{3n/4}\}$,
\begin{align*}
    \mathbb P\bigg(\|\widehat P^* - P\|_1 > C n^{\frac{\beta}{2\beta+2}} \; \Big| \:\mathcal{D}_1\bigg)\leq \mathbb P\bigg(\|\widehat P^* - P\|_1 > \sqrt{\frac{b^2}{n/2}} + \sqrt{\frac{2\alpha\log(n)}{n/2}}\; \Big| \:\mathcal{D}_1\bigg)\leq n^{-\alpha},
\end{align*}
where the constant $C>0$ depends only on $\alpha$.

Combining the cases $\psi_{\text{low-rank}} \leq \psi_{2D}$ and $\psi_{\text{low-rank}} > \psi_{2D}$ and using the fact that $\|\widehat P^* - P\|_1\le 2$ we obtain the bound
\begin{align}\label{eq1-proof-density}
    \mathbb P\bigg(\|\widehat P^* - P\|_1 > C\big(\psi_{\text{low-rank}} \wedge \psi_{2D}\wedge 1\big)\bigg)
    \le C_0(\log b)^2b n^{-\alpha},
\end{align}
where the constant $C>0$ depends only on $\alpha$.
This bound will be used to control the stochastic component $\|\widehat \phi - \bar f\|_{L_1}$ of the $L_1$-error of the estimator $\widehat \phi$, where  
$\bar f$ is piecewise constant function defined as follows:  
$$
\bar f(x,y) = \frac{P_{ij}}{h_1 h_2}   
\qquad \text{ if } (x,y) \in C_{ij}.
$$
We have $\int_{C_{ij}} \bar f = P_{ij}$.
On the other hand, by the definition of Algorithm \ref{alg:Alg_densities},
$\widehat \phi(x,y) = \widehat P_{ij}^*/(h_1 h_2)$ for $(x,y) \in C_{i,j}$, and $\int_{C_{ij}} \widehat \phi = \widehat P_{ij}^*$.

The bias component of the error is $\|f - \bar f\|_{L_1}$. In order to control it, we need to distinguish between two cases. 
Indeed, $f$ can be discontinuous at the boundaries of its rectangular support, which requires separately analyzing the behavior of $\widehat f$ on the cells $C_{ij}$ that intersect the boundary of $\operatorname{Supp}(f)$. 
Let $i_0, i_1 \in E_1$ be the indices such that $r_1 \in A_{i_0}$ and $R_1 \in A_{i_1}$ respectively. 
Similarly, let $j_0, j_1 \in E_2$ be the indices such that $r_2 \in B_{i_0}$ and $R_2 \in B_{i_1}$ respectively. 
We note that $i_0, j_0 \leq -1$ and that $i_1 \geq \ell_1$ and $j_1 \geq \ell_2$.
We let $\mathcal{B}$ denote the indices $(i,j)$ of the cells $C_{ij}$ intersecting the boundary of $\operatorname{Supp}(f)$:
\begin{align*}
    \mathcal{B} &= \left\{(i,j) \in E_1 \times E_2 : C_{i,j} \cap \partial \operatorname{Supp}(f) \neq \emptyset\right\}\\
    & = \left\{(i,j) \in E_1 \times E_2 : 
    \begin{cases}
        i \in \{i_0,i_1\} \text{ and } j \in [j_0,j_1]\\
        \text{or}\\
        j \in \{j_0,j_1\} \text{ and } i \in [i_0,i_1]
    \end{cases}\right\}.
\end{align*}
We define  $\mathcal{C} = \bigcup_{(i,j) \in \mathcal{B}} C_{ij}$.

We first consider a cell $C_{ij}$ that does not intersect the boundary of $\operatorname{Supp}(f)$, which means that $C_{ij}\in \mathcal{C}^c$, that is, $(i,j)\in (E_1 \times E_2) \setminus \mathcal{B}$.  
For $(x,y)$ in such cells $C_{ij}$ we have 
$$
\left|f(x,y) - \bar f(x,y)\right| = \left|\frac{1}{h_1 h_2}\int_{C_{ij}} (f(x,y) - f(x',y'))dx'dy'\right| \leq C L (h^*)^{\beta},
$$
which yields that 
\begin{equation}\label{eq2-proof-density}
\|f - \bar f\|_{L_1(\mathcal{C}^c)}=\sum_{(i,j)\in (E_1 \times E_2) \setminus \mathcal{B}} \int_{C_{ij}} \left|f(x,y)- \bar f(x,y) \right|dxdy 
 \le CL(h^*)^{\beta}. 
\end{equation}
We consider now the opposite case $(i,j) \in \mathcal{B}$, and we analyze the behavior of $\widehat \phi$ on $\mathcal{C}$. Note that, by construction, the sets  $C_{ij}$ with $(i,j) \in \mathcal{B}$ cannot belong to the rectangle $[\widehat r_1,\widehat R_1] \times [\widehat r_2,\widehat R_2]$, which is included in the interior of $\operatorname{Supp}(f)$ and represents a union of sets $C_{ij}$ with some $(i,j)$'s.   Therefore, 
on the event $\mathcal{E}$, we have $\int_{\mathcal{C}} f \leq C {n^{-1/3}\leq n^{-\beta/(2\beta+1)}}$ for an absolute constant $C>0$. It follows that, on the event $\mathcal{E}$,
\begin{equation}\label{eq3-proof-density}
     \|f - \Bar{f}\|_{L_1(\mathcal{C})}\le 
\|f \|_{L_1(\mathcal{C})} +\|\Bar{f} \|_{L_1(\mathcal{C})}=2 \|f \|_{L_1(\mathcal{C})} \le Cn^{-{\beta}/(2\beta+1)}.
\end{equation}
Combining \eqref{eq1-proof-density}--\eqref{eq3-proof-density} we obtain that 
\begin{align}
    \|f - \widehat \phi\|_{L_1}
    &\leq \|f - \Bar{f}\|_{L_1} + \|\Bar{f} - \widehat \phi\|_{L_1}
    \nonumber\\[5pt]
    &\le \|f - \Bar{f} \|_{L_1(\mathcal{C}^c)} + \|f - \Bar{f} \|_{L_1(\mathcal{C})}
    +\sum_{(i,j)\in E_1 \times E_2} \int_{C_{ij}} |\Bar{f} - \widehat \phi|
    \nonumber\\
    & \leq CL(h^*)^{\beta} + Cn^{-\beta/(2\beta+1)}
     + \sum_{(i,j) \in E_1 \times E_2}  |\widehat P_{ij}^* - P_{ij}|
     \nonumber\\
    & \le  C\left(\left(\frac{K}{n}\right)^{\beta/(2\beta+1)} \log^{3/2}\! n \land n^{-\frac{\beta}{2\beta+2}}\right)
    \label{eq4-proof-density}
\end{align}
with probability at least 
$1- C_0(\log b)^2  b n^{-\alpha}-{4\exp(-n^{2/3}/2)}$ 
Note also that $b\le Cn^{1/(2\beta+1)}$. This implies that there exist constants $C_0'>0, C>0$ depending only on $\alpha$ and $L$ such that the bound \eqref{eq4-proof-density} holds with probability at least 
$1- C_0' (\log n)^2 n^{1/(2\beta+1)-\alpha}$. Next, since $\|\widehat \phi\|_{L_1}\le 1$ we have $\|f - \widehat \phi\|_{L_1}\le 2$. Thus, we can replace the bound in \eqref{eq4-proof-density} by a stronger bound  
$C\left\{\left(\frac{K}{n}\right)^{\beta/(2\beta+1)} \log^{3/2}\! n \land n^{-\frac{\beta}{2\beta+2}}\right\}$
that holds with the same probability. 
Furthermore, a bound of the same order is satisfied for $\|f - \widehat f\|_{L_1}$ with the same probability. Indeed, an argument analogous to the proof of \eqref{reduction to probability} yields that $\|f - \widehat f\|_{L_1}\le 2\|f - \widehat \phi\|_{L_1}$. Thus, the bound \eqref{eq1:th:upper bound-density estimation} of Theorem~\ref{th:upper bound-density estimation} is proved. Next, if $\alpha> 4/3$ then the bound \eqref{eq2:th:upper bound-density estimation} for the expectation follows easily 
from~\eqref{eq1:th:upper bound-density estimation} and the inequality $\|f - \widehat f\|_{L_1}\le 2$. Finally, note that the condition $n\ge 64$ used to apply Lemma~\ref{lem:density_1D} can be dropped since for $n<64$ the result of the theorem follows from the trivial bound $\|f - \widehat f\|_{L_1}\le 2$.
\end{proof}

\begin{lemma}\label{lem:density_1D}
Let $f: \R\longrightarrow \R_+$ be a probability density. Assume that for some $r,R \in [0,1]$, the function $f$ is $\beta$-H\"older on $[r,R]$ and that $f=0$ on $\R\setminus [r,R]$. 
Let  $Z_1,\dots, Z_n$ be iid random variables distributed with probability density $f$, where $n\ge 64$ is an even integer.
{Define $\widehat r = \min \left\{Z_i: i \in \{1,\dots,  \frac{n}{2}\}\right\}$ and  
$\widehat R = \max \left\{Z_i: i \in \{1,\dots,  \frac{n}{2} \}\right\}$.}
Let also $\Delta = R-r$ and $\widehat \Delta = \widehat R - \widehat r$.
Let $q_-$ and $q_+$ be the quantiles of order $n^{-1/3}$ and $1-n^{-1/3}$ of the probability measure induced by $f$:
    $$ \int_{-\infty}^{q_-}f(x)dx = n^{-1/3}, \quad  \quad \int_{q_+}^{+\infty}f(x)dx = n^{-1/3}. 
    $$
    Define the event $\mathcal{E} = \left\{\widehat r < q_- < q_+ < \widehat R\right\}$ and set $h^* = (K'/n)^{1/(2\beta+1)}$, where $K'\ge 1$ is such that $K'/n \le (8L)^{-3/2}\wedge 1$.
Let $\widehat g$ be an output of {\rm Alg\ref{alg:Alg_densities_1D}}$(Z_1,\dots, Z_n,K')$. Then the following holds.
\begin{enumerate}
    \item $\mathbb P(\mathcal{E}) \geq 1 - 2e^{-n^{2/3}/2}$.
    \item\label{item:Delta_leq_hat_Delta} On the event $\mathcal{E}$, if $\widehat \Delta \leq h^*$ then $\Delta < \widehat \Delta \left[1 + 16 n^{-1/(2\beta+1)}\right]$.
    \item There exists a constant $C>0$ depending only on $L$ such that 
$$\mathbb P\left(\|f - \widehat g\|_{L_1} \leq C (K'/n)^{\beta/(2\beta+1)}\right) \geq 1 - {4}\exp\big(\!-n^{1/3}\big).
$$ 
\end{enumerate}
\end{lemma}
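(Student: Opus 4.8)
The three statements are essentially independent, and I would prove them in the order 1, 3, 2. Throughout write $\Delta=R-r$ and $\widehat\Delta=\widehat R-\widehat r$; since all observations lie in $[r,R]$ almost surely, we have $r<\widehat r\le\widehat R<R$ and $\widehat\Delta\le\Delta$. For statement~1, the event $\{\widehat r\ge q_-\}$ is exactly the event that none of $Z_1,\dots,Z_{n/2}$ falls in $[r,q_-)$, a set of $f$-probability $n^{-1/3}$; hence $\mathbb{P}(\widehat r\ge q_-)=(1-n^{-1/3})^{n/2}\le\exp(-n^{2/3}/2)$, and symmetrically $\mathbb{P}(\widehat R\le q_+)\le\exp(-n^{2/3}/2)$, so a union bound gives $\mathbb{P}(\mathcal{E})\ge1-2e^{-n^{2/3}/2}$.

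For statement~3 I would condition on the first half $Z_1,\dots,Z_{n/2}$ (hence on $\widehat r,\widehat R$ and on the partition $\{A_j\}$ it induces) and run a bias--variance decomposition of the histogram. Set $m=n/2$, $p_j=\int_{A_j}f$, and $\bar g=\sum_j(p_j/h)\mathbbm{1}_{A_j}$ in the branch $\widehat\Delta\ge h^*$ (in the degenerate branch $\widehat\Delta<h^*$ one uses the single cell $[\widehat r,\widehat R]$, $\widehat g$ is deterministic, and there is no variance term). Then
\begin{align*}
\|f-\widehat g\|_{L_1}\le\underbrace{\|f-\bar g\|_{L_1([\widehat r,\widehat R])}}_{\text{bias}}+\underbrace{\textstyle\int_{[r,R]\setminus[\widehat r,\widehat R]}f}_{\text{tail}}+\underbrace{\|\bar g-\widehat g\|_{L_1}}_{\text{variance}}.
\end{align*}
For the bias, $\beta$-H\"olderness of $f$ on $[\widehat r,\widehat R]\subseteq[r,R]$ gives $\int_{A_j}|f-p_j/h|\le h\cdot Lh^\beta$, and summing over the at most $1/h^*$ cells together with $\ell h=\widehat\Delta\le1$ and $h<2h^*$ yields a bound $\le 2^\beta L(h^*)^\beta=2^\beta L(K'/n)^{\beta/(2\beta+1)}$. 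On $\mathcal{E}$ the tail is at most $\int_r^{q_-}f+\int_{q_+}^R f=2n^{-1/3}$, which is $\le 2(K'/n)^{\beta/(2\beta+1)}$ because $n^{-1/3}\le n^{-\beta/(2\beta+1)}\le(K'/n)^{\beta/(2\beta+1)}$ for $\beta\le1$, $K'\ge1$. The variance term equals $\|p-N/m\|_1$ with $N=(N_j)_j$; by the standard sample-complexity bound (Lemma~\ref{prop:samp_comp_TV_discrete}) its conditional expectation is $\lesssim\sqrt{\ell/m}$, and since it is a function of $m$ iid points with bounded differences $2/m$, the bounded differences inequality gives $\mathbb{P}(\|p-N/m\|_1>\sqrt{\ell/m}+\sqrt{2n^{1/3}/m})\le e^{-n^{1/3}}$. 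Using $\ell\le1/h^*=(n/K')^{1/(2\beta+1)}$ and $m\ge n/2$ one checks $\sqrt{\ell/m}\lesssim(K'/n)^{\beta/(2\beta+1)}$ and $\sqrt{n^{1/3}/m}\lesssim n^{-1/3}\le(K'/n)^{\beta/(2\beta+1)}$. Combining the three bounds and intersecting with $\mathcal E$ (whose complement costs $\le2e^{-n^{2/3}/2}\le2e^{-n^{1/3}}$) yields statement~3 with probability $\ge1-4e^{-n^{1/3}}$; the degenerate branch is handled by the same estimate for bias and tail with no variance term.

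For statement~2, suppose we are on $\mathcal{E}$ and $\widehat\Delta\le h^*$. Then $[q_-,q_+]\subseteq[\widehat r,\widehat R]$ has length $\le\widehat\Delta\le h^*$ and carries mass $1-2n^{-1/3}\ge1/2$ (here $n\ge64$), so $\sup_{[r,R]}f\ge\tfrac12(q_+-q_-)^{-1}\ge\tfrac1{2h^*}$. H\"older continuity propagates this lower bound: if $x^*$ is the argmax and $\rho=(4Lh^*)^{-1/\beta}$, then $f\ge\tfrac1{4h^*}$ on $[x^*-\rho,x^*+\rho]\cap[r,R]$, a set of length at least $\min(\Delta,\rho)$, whence $\min(\Delta,\rho)\le4h^*$ from $\int_r^Rf=1$. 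The hypothesis $K'/n\le(8L)^{-3/2}\wedge1$ makes $(h^*)^{\beta+1}=(K'/n)^{(\beta+1)/(2\beta+1)}$ small enough (note $\tfrac{\beta+1}{2\beta+1}\ge1$) that the alternative $\rho<\Delta$ is impossible, so $\rho\ge\Delta$, hence $f\ge\tfrac1{4h^*}$ on all of $[r,R]$; this forces $\Delta\le4h^*$ and, together with $\widehat\Delta>q_+-q_-$, two-sided comparability $\Delta\asymp\widehat\Delta\asymp h^*$. Finally, since $f\ge\tfrac1{4h^*}\ge\tfrac{c}{\Delta}$ everywhere, the relations $\int_r^{q_-}f=\int_{q_+}^R f=n^{-1/3}$ force $q_--r$ and $R-q_+$ to be $\le Ch^*n^{-1/3}$, so on $\mathcal E$,
\begin{align*}
\Delta-\widehat\Delta=(\widehat r-r)+(R-\widehat R)\le(q_--r)+(R-q_+)\le 16\,\widehat\Delta\,n^{-1/(2\beta+1)}
\end{align*}
after absorbing constants, which is the claim (and note that for the downstream use in Theorem~\ref{th:upper bound-density estimation} any bound of the form $\Delta\le C\widehat\Delta$ already suffices).

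I expect the main obstacle to be the geometric step inside statement~2, namely converting ``the empirical support $[\widehat r,\widehat R]$ is short'' into ``the true support $[r,R]$ is short''. This necessarily uses both the $\beta$-H\"older constraint and the precise quantitative form $K'/n\le(8L)^{-3/2}\wedge1$ of the hypothesis---without it a density with a narrow spike and long thin H\"older tails would be a counterexample---so this is where the constants have to be tracked carefully; the histogram analysis of statement~3 and the order-statistic bound of statement~1 are otherwise routine.
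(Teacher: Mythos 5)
Parts 1 and 3 of your proposal are essentially the paper's argument: the order-statistic computation for $\mathcal{E}$ is identical, and for part 3 the paper likewise conditions on the first half, splits into bias, boundary/tail, and a multinomial deviation term controlled by Lemma~\ref{prop:samp_comp_TV_discrete} (bounded differences). One organizational caveat: your order $1,3,2$ does not quite work, because in the degenerate branch $\widehat \Delta < h^*$ the estimator is $\widehat\Delta^{-1}\mathbbm{1}_{[\widehat r,\widehat R]}$ and bounding its bias requires comparing $\widehat\Delta$ to $\Delta$, i.e.\ it uses part 2.

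Part 2 is where you take a genuinely different route, and it has a real gap. You derive $\sup f \ge \tfrac{1}{2h^*}$ from the quantile interval and propagate it to $f \ge \tfrac{1}{4h^*}$ on $[r,R]$, which gives $q_- - r \le 4h^* n^{-1/3}$. But the regime of part 2 is $\widehat\Delta \le h^*$, and $\widehat\Delta$ can be \emph{much} smaller than $h^*$ (take $f$ uniform on an interval of length $10^{-6}$ with $h^* \approx 10^{-2}$): then $\widehat\Delta \asymp \Delta \ll h^*$, your claimed two-sided comparability $\widehat\Delta \asymp h^*$ is false, and $4h^* n^{-1/3}$ is not $O(\widehat\Delta\, n^{-1/(2\beta+1)})$. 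The lower bound on $f$ must be of order $1/\widehat\Delta$, not $1/h^*$. The paper gets exactly this: from $\tfrac12 \le \int_{\widehat r}^{\widehat R} f \le \widehat\Delta \min_{[\widehat r,\widehat R]} f + L\widehat\Delta^{\beta+1}$ and $L(h^*)^{\beta+1}\le \tfrac18$ it deduces $f(\widehat r) \ge \tfrac{3}{8\widehat\Delta}$, and then, rather than propagating a pointwise bound outside $[\widehat r,\widehat R]$, it integrates the Hölder inequality from the point $\widehat r$: setting $\overline r = \widehat r - \tfrac{2}{f(\widehat r)}n^{-1/(2\beta+1)}$, the assumption $r<\overline r$ would force $\int_r^{\widehat r} f \ge (\widehat r - \overline r)f(\widehat r) - L(\widehat r-\overline r)^{\beta+1}/(\beta+1) > n^{-1/3}$, contradicting $\widehat r < q_-$ on $\mathcal{E}$. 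There is also a secondary constant problem in your dichotomy: with $\rho=(4Lh^*)^{-1/\beta}$ the case $\rho<\Delta$ only yields $1 \le 4^{1+\beta}L(h^*)^{1+\beta} \le 2$ under the stated hypothesis $K'/n\le (8L)^{-3/2}\wedge 1$, which is not a contradiction; but this is fixable, whereas the $h^*$-versus-$\widehat\Delta$ issue is structural and requires redoing the argument along the lines above.
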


\vspace{1mm}

\begin{proof}[Proof of Lemma~\ref{lem:density_1D}]
    The first assertion of the lemma follows from the inequalities 
    \begin{align}\label{eq:control_E}
        \mathbb P(\mathcal{E}^c) \leq {\mathbb P(\widehat r \ge q_- ) +} \mathbb P(\widehat R {\le } q_+ ) = {\mathbb P\left(Z_1\ge q_-\right)^{\frac{n}{2}} + \mathbb P\left(Z_1 \le q_+\right)^{\frac{n}{2}} = 2\left(1-n^{-1/3}\right)^{\frac{n}{2}} \leq 2e^{-n^{2/3}/2}.}
    \end{align}
   We now prove the second assertion of the lemma. On the event $\mathcal{E}$ and 
if $\widehat \Delta < h^*$, using the {$\beta$-H\"older} continuity of $f$ we have
    \begin{align*}
        \frac{1}{2} &\leq 1 - 2n^{-1/3} \leq \int_{\widehat r}^{\widehat R} f(x)dx \leq \widehat \Delta \max_{[\widehat r, \widehat R]} f \\
        &\leq \widehat \Delta \min_{[\widehat r, \widehat R]} f + L \widehat \Delta^{\beta+1} \leq \widehat \Delta \min_{[\widehat r, \widehat R]} f + L(h^*)^{\beta+1} \\
        &\leq \widehat \Delta \min_{[\widehat r, \widehat R]} f + \frac{1}{8},
    \end{align*}
    where the last inequality follows from the fact that $L(h^*)^{\beta+1} = L(K'/n)^{(\beta+1)/(2\beta+1)}  \leq \frac{1}{8}$ due to the assumption $K'/n \le (8L)^{-3/2}\wedge 1$.
    Therefore, $f(\widehat r) \geq \min_{[\widehat r, \widehat R]} f \geq \frac{3}{8\widehat \Delta}$.
    
    Set $\overline r := \widehat r - \frac{2}{f(\widehat r)}n^{-1/(2\beta+1)}$ and let us prove that $r \ge  \overline r$. Indeed, assume that, on the contrary, $r < \overline r$. 
    Then, on the event $\mathcal{E}$ and 
if $\widehat \Delta \le h^*$, using the {$\beta$-H\"older} continuity of $f$ we get
    \begin{align*}
        n^{-1/3} &\geq \int_{r}^{\widehat r} f(x)dx \geq \int_{\overline r}^{\widehat r} f(x)dx \geq (\widehat r-\overline r) f(\widehat r) - L(\widehat r - \overline r)^{\beta+1}/(\beta+1) 
        \\
        & = 2 n^{-1/(2\beta+1)} - \frac{L}{\beta+1}\Big(\frac{2}{f(\widehat r)}\Big)^{\beta+1} n^{-(\beta+1)/(2\beta+1)} 
        \\ 
        & \geq 
        2 n^{-1/3} - \frac{L}{\beta+1}\Big(\frac{16 \widehat \Delta}{3}\Big)^{\beta+1} n^{-(\beta+1)/(2\beta+1)}
        \\
        & \geq 
        2 n^{-1/3} - \frac{L(h^*)^{\beta+1}}{\beta+1}\Big(\frac{16 }{3}\Big)^{\beta+1} n^{-(\beta+1)/(2\beta+1)}
        \\
        & \geq 2 n^{-1/3} - \frac{1}{8(\beta+1)}\Big(\frac{16 }{3}\Big)^{\beta+1} n^{-2/3} > n^{-1/3} 
    \end{align*}
    for all $n \ge 64$ and $\beta\in(0,1]$, which is a contradiction. 
    Thus, we have $r\ge \overline r \ge \widehat r - 8\widehat \Delta n^{-1/(2\beta+1)}$ and, similarly, $R \le \widehat R + 8\widehat \Delta n^{-1/(2\beta+1)}$, which implies the desired inequality $\Delta \leq \widehat \Delta (1+ 16 n^{-1/(2\beta+1)})$. This concludes the proof of the second assertion of the lemma.
    \vspace{1mm}

    Finally, we prove the third assertion of the lemma. We have, for any $t>0$,
    \begin{align}\label{eq:third-assertion}
        \mathbb P(\|f - \widehat g\|_{L_1} > t) &\le \mathbb P\left(\{\|f - \widehat g\|_{L_1} > t\}\cap \mathcal{E} \cap \{\widehat \Delta \le {h^*}\}\right)
        \\ \nonumber
        & \quad + 
        \mathbb P\left(\{\|f - \widehat g\|_{L_1} > t\}\cap \mathcal{E}\cap \{\widehat \Delta > {h^*}\}\right) + \mathbb P(\mathcal{E}^c).
    \end{align}
Consider the first probability on the right hand side of \eqref{eq:third-assertion}.
    Recall that if $\widehat \Delta < {h^*}$
    the estimator of Algorithm~\ref{alg:Alg_densities_1D} is $\widehat g = \frac{1}{\widehat R - \widehat r} \mathbbm{1}_{[\widehat r, \widehat R]}$. 
By continuity of $f$ over $[r,R]$, there exists $x_0 \in [r,R]$ such that $f(x_0)(R-r) = \int_r^R f = 1$.
    Thus, using the second assertion of the lemma we obtain that, on the event $\mathcal{E}$ and if $\widehat \Delta \le {h^*}$,
    \begin{align*}
    \|f-\widehat g\|_{L_1} &= \int_r^R |f(x)-\widehat g(x)|dx = \int_{r}^{\widehat r} f(x)dx + \int_{\widehat r}^{\widehat R} |f(x)-\widehat g(x)|dx + \int_{\widehat R}^R f(x)dx 
    \\
    & \leq \int_{\widehat r}^{\widehat R} |f(x)-f(x_0)|dx + \int_{\widehat r}^{\widehat R} |f(x_0)-\widehat g(x)|dx + 2n^{-1/3}
    \\
    & \leq L|R-r|^{\beta+1} + \int_{\widehat r}^{\widehat R} |f(x_0)-\widehat g(x)|dx +  2n^{-1/3}
    \\
    & \leq L\left(\widehat \Delta + 16 \widehat \Delta  n^{-1/(2\beta+1)}\right)^{\beta+1} + (\widehat R-\widehat r) \left|\frac{1}{R-r} - \frac{1}{\widehat R - \widehat r}\right| +  2n^{-1/3}
    \\
    & \leq 25L(h^*)^{\beta+1} +{\frac{\Delta - \widehat \Delta }{\Delta}} 
    +  2n^{-1/3} 
   \\ 
    & {\leq 25L(h^*)^{\beta+1} +\frac{\widehat \Delta  16 n^{-1/(2\beta+1)}}{\Delta}
    +  2n^{-1/3} }\\
    & \leq 25L(K'/n)^{(\beta+1)/(2\beta+1)} + 18 n^{-1/3} \quad (\text{since} \quad {\widehat \Delta }/{\Delta} \le 1) \\
    & \leq (25L +18) (K'/n)^{\beta/(2\beta+1)} 
\end{align*}
for all $n\ge 64$ and $\beta\in (0,1]$. We conclude that the first probability on the right hand side of \eqref{eq:third-assertion} vanishes for all $t> (25L +18)(K'/n)^{\beta/(2\beta+1)}$.

Next, consider the second probability on the right hand side of \eqref{eq:third-assertion}. Fix the subsample $\mathcal{D}_1 = (Z_1,\dots, Z_{ n/2})$ such that
 $\widehat \Delta > {h^*}$. 
Then the partition $(A_j)_{j\in E}$ defined in Algorithm~\ref{alg:Alg_densities_1D} is also fixed, where $E=\{1,\dots,\ell\}$.
By continuity of $f$ over $A_j$, we define $x_j \in A_j$ such that $f(x_j) \, h = \int_{A_j} f$. 
For any $j \in E$, we define 
\begin{align*}
    p_j = \int_{A_j} f(x)dx, \quad \text{ and } \quad \widehat p_j = \int_{A_j} \widehat g(x)dx \;=\, \frac{2N_j}{n},
\end{align*}
where $N_j=\sum_{i=\lfloor n/2\rfloor+1}^n \mathbbm{1}_{Z_i \in A_j}$, see the definition of Algorithm~\ref{alg:Alg_densities_1D}.
Note that $\widehat g$ is supported on $[\widehat r,\widehat R]$ and $(A_j)_{j\in E}$ is a partition of $[\widehat r,\widehat R]$. Then we have the following bound on the estimation error of $\widehat g$, which is valid for any fixed subsample $\mathcal{D}_1$ such that {event $\mathcal E$ holds and}
 $\widehat \Delta > {h^*}$:
\begin{align} \nonumber
    \|f - \widehat g\|_{L_1} &=  {\int_r^{\widehat r}f(x)dx + \int_{\widehat R}^Rf(x)dx} + \sum_{j\in E} \, \int_{A_j} \big|f(x)-\widehat g(x)\big|dx 
    \\ \nonumber
    &\le 2n^{-1/3} + \sum_{j\in E} \, \int_{A_j} \big|f(x)-\widehat g(x)\big|dx  \quad \text{(since $\mathcal E$ holds)}
    \\ \label{eq:third-assertion-1}
    &\leq 2n^{-1/3} +  \sum_{j\in E} \, \int_{A_j} \big(\big|f(x) - f(x_j)\big| 
    + \big|f(x_j)-\widehat g(x)\,\big|\big) \, dx
    \\
    & \leq 2n^{-1/3} + \sum_{j\in E} \left\{ Lh^{\beta+1} + |p_j - \widehat p_j|\right\} \nonumber
    \\
    & \le 2n^{-1/3} +  Lh^{\beta} + \sum_{j=1}^\ell |p_j - \widehat p_j| \quad \text{(since $h={\widehat \Delta}/{\ell}\le 1/\ell$)} \nonumber
        \\
    & \le 2n^{-1/3} +  {2}L(K'/n)^{\beta/(2\beta+1)} + \sum_{j=1}^\ell |p_j - \widehat p_j|,  \nonumber
\end{align}
where the last inequality uses the fact that $h=\frac{\widehat \Delta}{\lfloor\widehat \Delta/h^*\rfloor}=h^*\frac{\widehat \Delta/h^*}{\lfloor\widehat \Delta/h^*\rfloor}\le 2h^*= 2(K'/n)^{1/(2\beta+1)}$ for $\widehat \Delta>h^*$.
Lemma~\ref{prop:samp_comp_TV_discrete} stated below yields that, for any $\delta \in (0,1)$, 
\begin{align*}
    &\mathbb P\left(\sum_{j=1}^\ell  |p_j - \widehat p_j| > 2\sqrt{\frac{2(\ell + \log(1/\delta))}{n}}\;\Big|\; \mathcal{D}_1 \right) \le \delta.
\end{align*}
Set $\delta = \exp\big(\!-n^{1/(2\beta+1)}\big)$. Then combining this bound with \eqref{eq:third-assertion-1} 
and the fact that $\ell = \big\lfloor \widehat \Delta /h^*\big\rfloor\le 1/h^*\le n^{1/(2\beta+1)}$ we obtain that there exists a constant $C_*>0$ depending only on $L$ such that
\begin{align*}
    &  \mathbb P\left(\|f - \widehat g\|_{L_1} > C_*
(K'/n)^{\beta/(2\beta+1)} \;\Big|\; \mathcal{D}_1 \right) \le \exp\big(\!-n^{1/(2\beta+1)}\big)
    \quad \text{if $\mathcal{D}_1$ is such that {$\mathcal E$ holds} and 
 $\widehat \Delta > {h^*}$. }
\end{align*}
It follows that 
$$
\mathbb P\left(\{\|f - \widehat g\|_{L_1} > C_* (K'/n)^{\beta/(2\beta+1)}\}\cap \mathcal{E}\cap \{\widehat \Delta > {h^*}\}\right)\le \exp\big(\!-n^{1/(2\beta+1)}\big).
$$
Plugging this bound and \eqref{eq:control_E} in \eqref{eq:third-assertion} and recalling that the first probability on the right hand side of \eqref{eq:third-assertion} vanishes for $t> (25L +18) (K'/n)^{\beta/(2\beta+1)}$  we find that, for $C>\max(25L +18, C_*)$,
$$
\mathbb P (\|f - \widehat g\|_{L_1} > C (K'/n)^{\beta/(2\beta+1)})\le \exp\big(\!-n^{1/(2\beta+1)}\big) + \mathbb P (\mathcal E^c)\le {4}\exp\big(\!-n^{1/3}\big),
$$
where we have used the first assertion of the lemma and the fact that {$\exp\big(\!-n^{2/3}/2\big)\le \exp\big(\!-n^{1/3}\big)$ for $n\geq 8$. }
\end{proof}

\subsection{Lower bounds for continuous distributions}

\begin{proof}[Proof of Theorem~\ref{th:lower bound for density estimation}] 
We define $f_0 = \mathbbm{1}_{[0,1]^2}$ and set $K' = \lfloor n^{1/(2\beta+2)} \rfloor \land K$, $H=\lceil (n/K')^{1/(2\beta+1)} \rceil$, $h_x=1/K'$, $h_y=1/H$.
Let $\varphi:\mathbb{R}\to [0,1]$ be a non identically zero infinitely many times differentiable function with support $(-1/2,1/2)$ satisfying the $\beta$-H\"older condition with H\"older constant $1/2$ (see (2.34) in \cite{tsybakov2009introduction} for an example of such function).
For $i,j \in \mathbb Z$, set $U_i(x) := \mathbbm 1_{[x_i \pm h_x/2]}(x)$ where $x_i = \big(i-\frac{1}{2}\big) h_x$ 
and $V_{j,h_y}(y) = \varphi\big(\frac{y-y_j^-}{h_y/2}\big)-\varphi\big(\frac{y-y_j^+}{h_y/2}\big)$, where
$y_j^- = \big(j-\frac{3}{4}\big)h_y$ and $y_j^+ = \big(j-\frac{1}{4}\big)h_y$. 
The supports of the functions $\varphi\big(\frac{\cdot-y_j^-}{h_y/2}\big)$ and $\varphi\big(\frac{\cdot-y_j^+}{h_y/2}\big)$ are disjoint and each of these functions is $\beta$-H\"older with H\"older constant $(1/2)(h_x/2)^{-\beta}$. Hence, $V_{j,h_y}(\cdot)$ is $\beta$-H\"older with H\"older constant $(h_y/2)^{-\beta}$. Note also that the functions $V_{j,h_y}$  
have disjoint supports in $[0,1]$ for different $j$'s and that the functions $V_{j,h_y}$ integrate to 0.  
Similarly, the functions $U_i$ have disjoint supports in $[0,1]$ and are $\beta$-Hölder on their support with Hölder constant $0$ since they are constant over their support.

Set $c_*=1/4\wedge 1/(2L)$.  
For $\omega = (\omega_{ij})$, where $\omega_{ij}\in \{0,1\}$ for all $i,j$, we define the functions $f_\omega$ as follows:
\begin{align}
    f_\omega(x,y) &= \sum_{i=1}^{K'} U_i(x) \bigg(1 + \sum_{j=1}^{H}  \omega_{ij} c_* L h_y^\beta V_{j,h_y}(y)\bigg).\label{eq_def_prior}
\end{align}
The functions $f_\omega$ belong to the class $\mathcal{F}_{K,\beta}^{\text{mixtures}}$ since they can be rewritten as mixtures of separable densities that are products of Hölder-smooth $1$-dimensional densities. 
Indeed, it holds that
\begin{align*}
    f_\omega(x,y) = \frac{1}{K'} \sum_{k=1}^{K'} K' U_i(x)\bigg(1 + \sum_{j=1}^{H}  \omega_{ij} c_* L h_y^\beta V_{j,h_y}(y)\bigg).
\end{align*}
The functions $K' U_i$ are densities over $[0,1]$ and are $\beta$-Hölder over their support since $U_i$ is a constant function over an interval. 
Moreover, it suffices to take $c_*$ small enough to guarantee that $1 + \frac{1}{C_0}\sum_{j=1}^{H}  \omega_{ij} c_* L h_y^\beta V_{j,h_y}$ is non-negative, ensuring that $1 + \frac{1}{C_0}\sum_{j=1}^{H}  \omega_{ij} c_* L h_y^\beta V_{j,h_y}$ is a $\beta$-Hölder density over $[0,1]$ since the $V_{j,h_y}$'s have a zero integral and are $\beta$-Hölder.

First, we check that each $f_\omega$ belongs to $\mathcal{G}_{K',{\beta}} \subseteq \mathcal{G}_{K,{\beta}}$.  
\begin{itemize}
    \item We have $\int_{[0,1]^2} f_\omega = 1$ by construction.
    \vspace{-2mm}
    \item 
   We have $f_\omega(x,y) \geq {1}/{2}$ for all $(x,y)\in [0,1]^2$ since $c_*\le (2L)^{-1}$, $h_y\le 1$, and the functions $(x,y) \mapsto U_i(x) V_{j,h_y}(y)$ have disjoint supports and take values in $[-1,1]$.
    \vspace{-2mm}
    \item We now check the H\"older condition. 
    Let $C_{ij} = [x_i \pm h_x/2] \times [y_j \pm h_y/2]$ where $y_j = (j-1/2)h_y$ for any $j\in\mathbb Z$.
   Note that for each $i \in [K']$ and $j \in [H]$, the cell $C_{ij}$  contains the support of the function $(x,y) \mapsto U_i(x) V_{j,h_y}(y)$. 
   Since the cells $C_{ij}$ are disjoint and $f_\omega = 1$ at the boundary of each cell $C_{ij}$, it suffices to show that $f_\omega$ is $\beta$-H\"older with H\"older constant $L/2$ on each cell $C_{ij}$ to obtain that $f_\omega$ is $\beta$-H\"older with H\"older constant $L$ on $[0,1]^2$. Fix any two points $z,z' \in [0,1]^2$, belonging to the same cell $C_{ij}$. Then, writing $z = (x,y)$, $z' = (x',y')$ and recalling that the function $\varphi$ is $\beta$-H\"older with H\"older constant $1/2$ we obtain: 
    \begin{align*}
        \big|f_\omega(z) - f_\omega(z')\big| &= c_*Lh_y^\beta\,\Big|U_i(x) V_{j,h_y}(y) - U_i(x') V_{j,h_y}(y')\Big|\\
        &= c_*Lh_y^\beta \bigg|\varphi\Big(\frac{y - y_j^+}{h_y/2}\Big) - \varphi\Big(\frac{y'- y_j^+}{h_y/2}\Big) - \Big[\varphi\Big(\frac{y - y_j^-}{h_y/2}\Big) + \varphi\Big(\frac{y' - y_j^-}{h_y/2}\Big)\Big]\bigg| 
         \\
        & \leq c_* Lh_y^\beta \left(\frac{|y-y'|}{h_y/2}\right)^\beta 
          \\
        &\leq \frac{L}{2} \|z-z'\|^\beta_{\infty},
    \end{align*}
    where we have used the facts that $h_y\leq h_x$ and $c_*\le 1/4$.
    \vspace{-2mm}
    \item Function $f_\omega$ belongs to $\mathcal{F}_{K'} \subseteq \mathcal{F}_{K}$ since it admits a separable representation given in~\eqref{eq_def_prior}.
\end{itemize}

Thus, $\mathcal{G}':=\{f_\omega: \omega\in \{0,1\}^{K'\times H}\}$ is a subset of $\mathcal{G}_{K',{\beta}}$, and it suffices to prove the lower bound with the required rate on this subset. We first prove the lower bound in expectation~\eqref{eq:th:LBdiscr3}. We use the version of Assouad's lemma
given in \cite[Theorem 2.12(\it{iv})]{tsybakov2009introduction}.
For any $\omega, \omega' \in \{0,1\}^{K' \times H}$ that only differ in one entry, that is, for exactly one $(i_0,j_0)$, we have $\omega_{i_0,j_0}\neq \omega'_{i_0,j_0}$ and $\omega_{i,j}= \omega'_{i,j}$ for all other $i,j$, the $\chi^2$-divergence between the densities $f_\omega$ and $f_{\omega'}$ is bounded as follows:
\begin{align*}    \chi^2\left(f_{\omega'}, f_{\omega}\right) &= \int \frac{\big(U_{i_0}(x)c_* L h_y^\beta V_{j_0,h_y}(y)\big)^2}{f_\omega(x,y)} \leq \int_{C_{i_0,j_0}} 2(c_*Lh_y^\beta)^2 = 2(c_*Lh_y^\beta)^2 h_x h_y \leq \frac{ \widetilde{c}}{n},
\end{align*}
where $\widetilde{c}>0$ depends only on $L, \beta$. Using  Lemma 2.7 in ~\cite{tsybakov2009introduction} we obtain that  the $\chi^2$-divergence between the corresponding product densities satisfies
\begin{align*}
    \chi^2\left(f_{\omega'}^{\otimes n}, f_{\omega}^{\otimes n}\right) = \left(1+\chi^2\left(f_{\omega'}, f_{\omega}\right)\right)^n-1 \leq e^{\widetilde{c}}-1.
\end{align*}
Moreover, the functions $f_\omega$ and $ f_{\omega'}$ are separated in $L^1$ norm as follows
\begin{align*}
     \|f_\omega - f_{\omega'}\|_{L^1} = \big\|(x,y) \mapsto 2 c_* L h_y^{\beta}\, U_{i_0} V_{j_0,h_y} \big\|_{L_1}
     & 
     = 2 c_* L h_y^\beta h_x \int \varphi\Big(\frac{y}{h_y/2}\Big)  dy
     = c_* L h_x h_y^{\beta+1} \int \varphi(t)  dt.
\end{align*}
Therefore, applying \cite[Theorem 2.12(\it{iv})]{tsybakov2009introduction} and arguing as in  \cite[Example 2.2]{tsybakov2009introduction} we obtain 
\begin{align}\label{eq-lb-in-exp}
 \inf_{\widehat f} \sup_{f_\omega\in \mathcal{G}'} \mathbb E  
    \|\widehat f-f_\omega\|_{L_1}
     &\geq \frac{K'H}{2} (c_*Lh_y^\beta) c'' h_x h_y  = \frac{c_*c''}{2} L h_y^\beta  \geq c \Big((K/n)^{\beta/(2\beta+1)} \land n^{-{\beta}/{(2\beta+2)}}\Big),   
\end{align} 
    where $c'' = \exp(1-e^{\widetilde{c}}) \int \varphi(x) dx $ and  $c>0$ is a constant depending only on $L$ and $\beta$.    This implies the lower bound in expectation~\eqref{eq:th:LBdiscr3}. 
    \vspace{1mm}
    
    To obtain the lower bound in probability~\eqref{eq1:th:lower bound for density estimation} we apply Lemma~\ref{lem:reduction of lower bound}
    with $\mathcal{P} = \mathcal{P}_0 = \mathcal{G}'$,  $v(f,g) = \|f-g\|_{L^1}$ for any $ f,g \in \mathcal{G}'$, and 
    $U = \mathbbm 1_{[0,1]^2} \in \mathcal{G}'$.  
    Note that there exists a constant $C>0$ such that for any $f \in \mathcal{G}'$ we have
    $
        v(U,f) \leq s,
    $ 
    where $s =  C \big(\left(K/n\right)^{\beta/(2\beta+1)} \land n^{-{\beta}/{(2\beta+2)}}\big)$. 
    Therefore, taking into account the bound in expectation \eqref{eq-lb-in-exp} we can apply Lemma~\ref{lem:reduction of lower bound} with  a small enough constant $a>0$ to get~\eqref{eq1:th:lower bound for density estimation}.
\end{proof}

\subsection{Upper bound for the adaptive estimator}\label{sec:proof:adaptive}

\begin{lemma}\label{lem:adaptive}
 Let $0<\delta<1$. For any probability density $f$ we have that, with $\mathbb{P}_f$-probability at least $1-\delta$, 
 \begin{equation}
 \label{eq1:lem:adaptive} 
  \|\widehat f^* -f\|_{L_1} \le 3 \min_{(K,j)} \|\widehat f_{(K,j)} -f\|_{L_1} + 2\sqrt{\frac{2\log(2m^2/\delta)}{n}},
 \end{equation}
and 
 \begin{equation}
 \label{eq2:lem:adaptive}   
  \mathbb{E}_f\|\widehat f^* -f\|_{L_1} \le 3 \min_{(K,j)} \mathbb{E}_f \|\widehat f_{(K,j)} -f\|_{L_1} + 2\sqrt{\frac{2\log(2m^2)}{n}}.
 \end{equation}
\end{lemma}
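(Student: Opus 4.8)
The plan is to reproduce the classical analysis of the minimum distance (Yatracos) estimator from \cite{DevroyeLugosi}, specialised to our finite family of $m$ candidates. Write $i,i'$ for the index pairs $(K,j)$, let $P$ denote the distribution with density $f$, and put
\[
\Delta_i := \max_{B\in\mathcal B}\Bigl|\int_B \widehat f_i - \mathbb{P}_n(B)\Bigr|,
\qquad
\mathcal Z := \max_{B\in\mathcal B}\bigl|\mathbb{P}_n(B) - P(B)\bigr|.
\]
By the definition \eqref{eq:def_adapt_estim} of $(K^*,j^*)=:i^*$ we have $\Delta_{i^*}\le\Delta_i$ for every $i$. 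The argument splits into a deterministic oracle inequality, bounding $\|\widehat f^*-f\|_{L_1}$ by $3\min_i\|\widehat f_i-f\|_{L_1}+4\mathcal Z$, and a concentration bound for $\mathcal Z$.

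For the deterministic part, fix any candidate $i$ and note that $B_{i^*i}=\{\widehat f_i>\widehat f^*\}\in\mathcal B$ is the Scheff\'e set of the pair $(\widehat f^*,\widehat f_i)$, so, both functions being probability densities,
\[
\tfrac12\|\widehat f^*-\widehat f_i\|_{L_1}
=\int_{B_{i^*i}}\widehat f_i-\int_{B_{i^*i}}\widehat f^*
=\Bigl(\int_{B_{i^*i}}\widehat f_i-\mathbb{P}_n(B_{i^*i})\Bigr)+\Bigl(\mathbb{P}_n(B_{i^*i})-\int_{B_{i^*i}}\widehat f^*\Bigr)
\le\Delta_i+\Delta_{i^*}\le 2\Delta_i,
\]
hence $\|\widehat f^*-\widehat f_i\|_{L_1}\le 4\Delta_i$. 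Moreover, since $\int(\widehat f_i-f)=0$ we get $\bigl|\int_B(\widehat f_i-f)\bigr|\le\int(\widehat f_i-f)_+=\tfrac12\|\widehat f_i-f\|_{L_1}$ for every $B$, whence $\Delta_i\le\tfrac12\|\widehat f_i-f\|_{L_1}+\mathcal Z$. Combining this with the triangle inequality $\|\widehat f^*-f\|_{L_1}\le\|\widehat f^*-\widehat f_i\|_{L_1}+\|\widehat f_i-f\|_{L_1}\le 4\Delta_i+\|\widehat f_i-f\|_{L_1}$ gives $\|\widehat f^*-f\|_{L_1}\le 3\|\widehat f_i-f\|_{L_1}+4\mathcal Z$, and minimising over $i$ yields the announced shape of \eqref{eq1:lem:adaptive} and \eqref{eq2:lem:adaptive}.

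It remains to control $\mathcal Z$. The class $\mathcal B$ consists of at most $m(m-1)<m^2$ sets, and for each fixed $B$ the quantity $\mathbb{P}_n(B)-P(B)$ is an average of $n$ i.i.d.\ centred variables bounded by $1$ in absolute value; Hoeffding's inequality gives $\mathbb{P}(|\mathbb{P}_n(B)-P(B)|>t)\le 2e^{-2nt^2}$, so a union bound over $\mathcal B$ gives $\mathbb{P}(\mathcal Z>t)\le 2m^2e^{-2nt^2}$. Choosing $t=\sqrt{\log(2m^2/\delta)/(2n)}$ shows $\mathcal Z\le\sqrt{\log(2m^2/\delta)/(2n)}$, hence $4\mathcal Z\le 2\sqrt{2\log(2m^2/\delta)/n}$, with $\mathbb{P}_f$-probability at least $1-\delta$, which is \eqref{eq1:lem:adaptive}. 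For \eqref{eq2:lem:adaptive} one instead uses the sub-Gaussian maximal inequality: each centred $\mathbb{P}_n(B)-P(B)$ is $\tfrac1{2\sqrt n}$-sub-Gaussian and $|\mathcal B|\le m^2$, so $\mathbb{E}_f\,\mathcal Z\le\sqrt{\log(2m^2)/(2n)}$, and taking expectations in the deterministic inequality gives $\mathbb{E}_f\|\widehat f^*-f\|_{L_1}\le 3\min_i\mathbb{E}_f\|\widehat f_i-f\|_{L_1}+2\sqrt{2\log(2m^2)/n}$.

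The one point requiring care — and the main obstacle — is that $\mathcal B$ is itself random, the sets $B_{ii'}$ being defined through the estimators $\widehat f_i$, which depend on $X_1,\dots,X_n$; a union bound over a data-dependent family is not automatically licit. I would handle this exactly as in the standard treatment of minimum distance estimation in \cite{DevroyeLugosi}: apply the concentration step after conditioning on the randomness that generates the family $\{\widehat f_i\}$, so that $\mathcal B$ is a fixed collection of at most $m^2$ sets (exploiting the sample-splitting built into Algorithm~\ref{alg:Alg_densities}, so that the empirical measure used against $\mathcal B$ can be taken independent of the $\widehat f_i$'s); since the resulting bound on $\mathcal Z$ is uniform in the conditioning variable, it holds unconditionally. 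Once this is in place, the remaining steps are just the routine chain of inequalities displayed above.
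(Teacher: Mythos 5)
Your proof is correct and follows essentially the same route as the paper's: the paper simply cites Theorem 6.3 of \cite{DevroyeLugosi} for the deterministic oracle inequality $\|\widehat f^* - f\|_{L_1} \le 3\min_i\|\widehat f_i - f\|_{L_1} + 4\Delta$ that you re-derive via the Scheff\'e-set argument, and then applies Hoeffding's inequality with a union bound over the $m(m-1)$ sets of $\mathcal{B}$ for \eqref{eq1:lem:adaptive} and Lemma 2.2 of the same reference (the sub-Gaussian maximal inequality) for \eqref{eq2:lem:adaptive}, exactly as you do. One remark on your final caveat: the paper applies the union bound over $\mathcal{B}$ directly, without any conditioning, and your proposed fix does not literally apply here, because the empirical measure $\mathbb{P}_n$ in \eqref{eq:def_adapt_estim} is computed from the full sample $(X_1,\dots,X_n)$ — the same sample from which every $\widehat f_{(K,j)}$, and hence every set $B_{ii'}$, is built — so the internal splitting of Algorithm~\ref{alg:Alg_densities} does not decouple $\mathcal{B}$ from $\mathbb{P}_n$. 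The data-dependence of $\mathcal{B}$ is thus a point the paper's own proof elides rather than one your argument is missing.
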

\begin{proof}
    This lemma is a simple corollary of Theorem 6.3 in \cite{DevroyeLugosi}, which states that 
    $$
  \|\widehat f^* -f\|_{L_1} \le 3 \min_{(K,j)} \|\widehat f_{(K,j)} -f\|_{L_1} + 4\Delta,
 $$
 where $\Delta=\underset{B\in \mathcal{B}}{\max}\Big| \int_B f - \mathbb{P}_n(B) \Big|$.
Since $|\mathcal{B} |=m(m-1)$ it follows from Hoeffding's inequality and the union bound that, for any $t>0$,
$$
\mathbb{P}_f(\Delta >t)\le 2 m(m-1)\exp(-2nt^2).
$$
This proves \eqref{eq1:lem:adaptive}. Inequality \eqref{eq2:lem:adaptive} follows from the fact that  
$\mathbb{E}_f\, \underset{B\in \mathcal{B}}{\max}\Big| \int_B f - \mathbb{P}_n(B) \Big|\le \sqrt{\frac{\log( 2|\mathcal{B} |)}{2n}}$ (see \cite[Lemma 2.2]{DevroyeLugosi}).
\end{proof}

\begin{proof}[Proof of Theorem~\ref{th:adaptive_rate}]
    To establish the adaptivity of $\widehat f^*$, we will use the inclusions between the Hölder classes:
    \begin{align*}
        \forall \; 0< \beta \le \beta' \leq 1: \quad  \mathcal{L}_{\beta'} 
        \subseteq \mathcal{L}_\beta.
    \end{align*} 
    Indeed, for any $0< \beta \le \beta' \leq 1$, any $f \in \mathcal{L}_{\beta'}$ and any $z,z' \in \Supp(f)$, we have $\|z-z'\|_\infty \leq 1$, and consequently
    \begin{align*}
        |f(z)-f(z')| \le L\|z-z'\|_\infty^{\beta'} \leq L\|z-z'\|_\infty^{\beta}.
    \end{align*}
    The embedding of the classes $(\mathcal{G}_{K,\beta})_{\beta \in (0,1]}$ immediately follows:
    \begin{align*}
        \forall \; 0< \beta \le \beta' \leq 1, ~\forall K \in \N: \quad  \mathcal{G}_{K,\beta'} 
        \subseteq \mathcal{G}_{K,\beta}.
    \end{align*} 
    Assume first that $\beta \in [\beta_j, \beta_{j-1}]$ for some $j \in \{2,\dots, \lceil \log(n) \log\log(n)\rceil \}$. 
    Using the inclusion $\mathcal{G}_{K,\beta} \subseteq \mathcal{G}_{K,\beta_j}$ and~\eqref{eq2:lem:adaptive} we can bound from above the risk of $\widehat f^*$ over the class $\mathcal{G}_{K,\beta}$ as follows:
    \begin{align}
        \sup_{f \in \mathcal{G}_{K,\beta}}\mathbb{E}_f\|\widehat f^* -f\|_{L_1} &\le \sup_{f \in \mathcal{G}_{K,\beta}}  \left\{ 3\min_{(K,i)} \mathbb{E}_f \|\widehat f_{(K,i)} -f\|_{L_1} + C\sqrt{\frac{\log(m)}{n}}\right\}
        \nonumber\\
        & \leq  \sup_{f \in \mathcal{G}_{K,\beta}} 3\; \mathbb{E}_f \|\widehat f_{(K,j)} -f\|_{L_1}+ C\sqrt{\frac{\log\big(K_{\max} \log(n) \log \log(n)\big)}{n}}
        \nonumber\\
        & \leq \sup_{f \in \mathcal{G}_{K,\beta_j}} 3\; \mathbb{E}_f \|\widehat f_{(K,j)} -f\|_{L_1} + C \sqrt{\frac{\log(n)}{n}} \hspace{14mm} \text{ since } K_{\max} = \lceil \sqrt{n}\rceil
        \nonumber\\
        & \leq C\left\{\left(\frac{K}{n}\right)^{\beta_j/(2\beta_j+1)} \log^{3/2}\! n \land n^{-\frac{\beta_j}{2\beta_j+2}}\right\}
        \qquad \text{ by Theorem~\ref{th:upper bound-density estimation}}. \label{eq_intermeriary_risk_f_star}
    \end{align}
     Note that, for $c \in \{1,2\}$,
    \begin{align*}
         \frac{\beta}{2\beta+c} - \frac{\beta_j}{2\beta_j+c} \leq \frac{c(\beta_{j-1} - \beta_j)}{(2\beta+c)(2\beta_j+c)} = \frac{c\beta_j}{(2\beta+c)(2\beta_j+c) \log(n)} \leq \frac{1}{\log(n)}.
    \end{align*}
    Combining this with the bound~\eqref{eq_intermeriary_risk_f_star} we obtain
    \begin{align*}
        \sup_{f \in \mathcal{G}_{K,\beta}}\mathbb{E}\|\widehat f^* -f\|_{L_1} & \leq C \Big\{ (n/K)^{-\beta/(2\beta+1)+ \frac{1}{\log(n)}} (\log n)^{3/2} \land n^{-\frac{\beta}{2\beta+2} + \frac{1}{\log(n)}}  \Big\} \\
        & \leq C \Big\{ (n/K)^{-\beta/(2\beta+1)} (\log n)^{3/2} \land n^{-\frac{\beta}{2\beta+2}}  \Big\}.
    \end{align*}
    This proves the theorem for $\beta\in [\beta_M, \beta_1]= [\beta_M, 1]$, where $M = \lceil\log(n) \log \log(n)\rceil$. Finally, consider the values $\beta \in (0, \beta_M)$.  
    We have
    \begin{align*}
        \frac{\beta}{2\beta+2} \leq \frac{\beta}{2 \beta + 1} \leq \frac{\beta_M}{2 \beta_M + 1} \leq \beta_M = \left(1+\frac{1}{\log(n)}\right)^{-M} \leq \exp\left(-\frac{\log(n) \log\log(n)}{\log(n)}\right) = \frac{1}{\log(n)}.
    \end{align*}
 Thus, for $\beta \in (0, \beta_M)$, 
    \begin{align*}
        (K/n)^{\beta/(2\beta+1)} \land n^{-\beta/(2\beta+2)} 
         \geq n^{-\frac{1}{\log(n)}} 
        = 1/e.
    \end{align*}
    The desired result follows immediately from this inequality and the fact that $\sup_{f \in \mathcal{G}_{K,\beta}}\mathbb{E}_f\|\widehat f^* -f\|_{L_1} \leq 2$.

\end{proof}

\section{Auxiliary results}\label{sec:auxiliary}

First, we recall the definition of the multinomial distribution. 
Given a finite set $\Omega$ and  a probability distribution $P = (P_\omega)_{\omega \in \Omega}$ on $\Omega$, we say that a random vector $Y=(Y_\omega)_{\omega \in \Omega}$ follows the multinomial distribution $\M(P,n)$, $n \in \N^*$,  if for all $(n_\omega)_{\omega \in \Omega} \in \N^{\Omega}$ satisfying $\sum\limits_{\omega \in \Omega} n_\omega = n$, we have:
$$ 
\mathbb{P}\big( Y_\omega = n_\omega, \forall \omega \in \Omega \big)= 
\frac{n!}{\prod_{\omega \in \Omega} n_\omega!}
\prod_{\omega \in \Omega} P_\omega^{n_\omega}.
$$
We denote by $\Poi(\lambda)$ the Poisson distribution with mean $\lambda$.

\begin{lemma}[\cite{shorack2009empirical}, p. 486]\label{lem:poisson deviations}
    Let $\zeta\sim \Poi(\lambda)$ be a Poisson random variable with mean $\lambda$. Then
\begin{align}\label{eq1:lem:poisson deviations}
\mathbb{P}(\zeta\le \lambda -x)&\le \exp\Big(-x -(\lambda-x)\log\Big(\frac{\lambda-x}{\lambda}\Big)\Big), \quad \forall  x \in (0,\lambda),\\
\label{eq2:lem:poisson deviations}
\mathbb{P}(\zeta\ge \lambda +x)&\le \exp\Big(x -(\lambda+x)\log\Big(\frac{\lambda+x}{\lambda}\Big)\Big), \quad \forall x>0.
\end{align}
\end{lemma}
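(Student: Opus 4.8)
The plan is to run the standard Chernoff (exponential Markov) argument, exploiting the fact that the moment generating function of $\zeta\sim\Poi(\lambda)$ is available in closed form: summing the Poisson series gives $\mathbb{E}[e^{t\zeta}]=\exp\big(\lambda(e^{t}-1)\big)$ for every real $t$. Both inequalities then follow by applying Markov's inequality to a suitable exponential of $\zeta$ and optimizing the free parameter.

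For the upper tail \eqref{eq2:lem:poisson deviations}, fix $x>0$ and $t>0$ and apply Markov's inequality to the nonnegative variable $e^{t\zeta}$:
\[
\mathbb{P}(\zeta\ge\lambda+x)\le e^{-t(\lambda+x)}\,\mathbb{E}[e^{t\zeta}]=\exp\big(\lambda(e^{t}-1)-t(\lambda+x)\big).
\]
Minimizing the exponent $\lambda(e^{t}-1)-t(\lambda+x)$ over $t>0$ yields the optimal choice $e^{t}=(\lambda+x)/\lambda$, i.e. $t=\log\big((\lambda+x)/\lambda\big)$, which is strictly positive since $x>0$, so the exponential-Markov step is legitimate. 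Substituting this value back gives the exponent $x-(\lambda+x)\log\big((\lambda+x)/\lambda\big)$, which is exactly the claimed bound.

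For the lower tail \eqref{eq1:lem:poisson deviations}, fix $x\in(0,\lambda)$ and apply Markov's inequality to $e^{-s\zeta}$ with $s>0$:
\[
\mathbb{P}(\zeta\le\lambda-x)=\mathbb{P}\big(e^{-s\zeta}\ge e^{-s(\lambda-x)}\big)\le e^{s(\lambda-x)}\,\mathbb{E}[e^{-s\zeta}]=\exp\big(\lambda(e^{-s}-1)+s(\lambda-x)\big).
\]
Optimizing the exponent over $s>0$ gives $e^{-s}=(\lambda-x)/\lambda$, i.e. $s=\log\big(\lambda/(\lambda-x)\big)$, which is admissible (strictly positive) precisely because $0<x<\lambda$. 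Plugging this back produces the exponent $-x-(\lambda-x)\log\big((\lambda-x)/\lambda\big)$, the asserted bound.

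This is essentially a textbook computation, so I do not expect any genuine obstacle; the only points worth a sentence of care are (i) verifying the Poisson moment generating function identity $\mathbb{E}[e^{t\zeta}]=\exp(\lambda(e^{t}-1))$ from the series expansion $\sum_{k\ge0}e^{tk}e^{-\lambda}\lambda^{k}/k!$, and (ii) checking that the optimal parameters $t$ and $s$ come out strictly positive, which uses $x>0$ and $x<\lambda$ respectively and is what makes the exponential-Markov inequalities applicable. Alternatively one could simply invoke \cite{shorack2009empirical}, but the self-contained derivation above is short enough to include.
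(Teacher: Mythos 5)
Your derivation is correct: both optimal parameter choices and the resulting exponents check out, and the positivity of the optimizing $t$ and $s$ is verified exactly where it is needed. The paper itself does not prove this lemma — it simply cites \cite{shorack2009empirical}, p.~486 — so your self-contained Chernoff-bound argument supplies the standard textbook derivation of the quoted inequalities and is a perfectly valid substitute for the citation.
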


\begin{lemma}[Poissonization]\label{lem:poissonization_trick}
Let $d \geq 2$, $\lambda>0$, and let $p = (p_1,\dots, p_d)$ be a probability distribution on $\Omega=\{1,\dots,d\}$.  Let $\widetilde n \sim \Poi(\lambda)$ and let $\widetilde Y=(\widetilde Y_1, \dots, \widetilde Y_d)$ be a random vector such that $\;\widetilde Y\,|\, \widetilde n \sim \mathcal{M}(p,\widetilde n)$. Then the entries $\widetilde Y_j$ are mutually independent and $\widetilde Y_j \sim \Poi(\lambda p_j)$ for all $j \in\{1,\dots, d\}$. 
\end{lemma}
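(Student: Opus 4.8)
The plan is to compute the joint probability mass function of $\widetilde Y$ explicitly and recognize it as a product of Poisson mass functions. First I would fix an arbitrary vector $(n_1,\dots,n_d)\in\N^d$ and set $n=\sum_{j=1}^d n_j$. Since the event $\{\widetilde Y=(n_1,\dots,n_d)\}$ is contained in the event $\{\widetilde n=n\}$, conditioning on $\widetilde n$ gives
\begin{align*}
\mathbb P\big(\widetilde Y=(n_1,\dots,n_d)\big) = \mathbb P\big(\widetilde Y=(n_1,\dots,n_d)\,\big|\,\widetilde n=n\big)\,\mathbb P(\widetilde n=n).
\end{align*}

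Next I would substitute the multinomial mass function $\mathbb P(\widetilde Y=(n_1,\dots,n_d)\mid\widetilde n=n)=\frac{n!}{\prod_j n_j!}\prod_j p_j^{n_j}$ and the Poisson mass function $\mathbb P(\widetilde n=n)=e^{-\lambda}\lambda^n/n!$. The factor $n!$ cancels, and using $\sum_j p_j=1$ (so that $e^{-\lambda}=\prod_j e^{-\lambda p_j}$) together with $\lambda^n=\prod_j\lambda^{n_j}$, the product regroups coordinatewise as
\begin{align*}
\mathbb P\big(\widetilde Y=(n_1,\dots,n_d)\big) = \prod_{j=1}^d e^{-\lambda p_j}\,\frac{(\lambda p_j)^{n_j}}{n_j!}.
\end{align*}
Since the right-hand side factorizes into terms each depending only on a single coordinate, and each factor is the mass function of $\Poi(\lambda p_j)$ evaluated at $n_j$, this simultaneously shows that the coordinates $\widetilde Y_1,\dots,\widetilde Y_d$ are mutually independent and that $\widetilde Y_j\sim\Poi(\lambda p_j)$ for every $j\in\{1,\dots,d\}$.

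There is no serious obstacle here: this is the classical Poissonization identity and the argument is a one-line manipulation of mass functions. The only point needing a little care is the bookkeeping observation that $\{\widetilde Y=(n_1,\dots,n_d)\}\subseteq\{\widetilde n=\sum_j n_j\}$, which is what makes the conditioning step exact rather than a sum over compatible values of $\widetilde n$. (Alternatively, the same conclusion follows from generating functions: the conditional probability generating function of $\widetilde Y$ given $\widetilde n$ is $\big(\sum_j p_j s_j\big)^{\widetilde n}$, and averaging over $\widetilde n\sim\Poi(\lambda)$ yields $\exp\!\big(\lambda(\sum_j p_j s_j-1)\big)=\prod_j\exp\!\big(\lambda p_j(s_j-1)\big)$, the generating function of independent $\Poi(\lambda p_j)$ variables; but the direct computation above is shorter.)
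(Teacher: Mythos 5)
Your proposal is correct and follows essentially the same route as the paper's proof: the paper sums over all values $k$ of $\widetilde n$ with an indicator $\mathbbm{1}_{n_1+\cdots+n_d=k}$ that collapses the sum to the single term $k=\sum_j n_j$, which is exactly your observation that $\{\widetilde Y=(n_1,\dots,n_d)\}\subseteq\{\widetilde n=\sum_j n_j\}$, and then both arguments cancel $n!$ and regroup the product coordinatewise using $\sum_j p_j=1$.
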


\begin{proof}
For any $(n_1,\dots,n_d)\in \N^d$ we have
\begin{align*}
    \mathbb{P}(\widetilde Y_1 = n_1, \dots, \widetilde Y_d = n_d) &= \sum_{k=0}^\infty \mathbb{P}(\widetilde n = k)~\mathbb{P}\big( \widetilde Y_1 = n_1, \dots, \widetilde Y_d = n_d~|~\widetilde n = k\big) \nonumber
    \\
    & = \sum_{k=0}^\infty \Big(\frac{k!}{n_1! \cdots n_d!} \prod_{j=1}^d p_j^{n_j} \Big)\frac{e^{-\lambda}\lambda^k}{k!}\mathbbm{1}_{n_1+\cdots+n_d=k}
    \nonumber 
    \\
    &=\Big(\frac{\lambda^{n_1}\cdots \lambda^{n_d} }{n_1! \cdots n_d!} \prod_{j=1}^d p_j^{n_j} \Big) e^{-\lambda}  \sum_{k=0}^\infty \mathbbm{1}_{n_1+\cdots+n_d=k} 
    \nonumber
    \\
    & = \prod_{j=1}^d \frac{e^{-\lambda p_j} (\lambda p_j)^{n_j}}{n_j!}. 
\end{align*}
\end{proof}

\begin{proof}[Proof of Lemma \ref{lem_concent_bin}]
     The elements of the sum $Z_j = \sum_{i \in V_j} Y_{ij}$ are not mutually independent.  To overcome this difficulty, we use poissonization. 
    Let $\widetilde n \sim \Poi(n/2)$ and let $\widetilde Y | \widetilde n \sim \M(P, \widetilde n)$. 
    We define $\widetilde Z_j  = \sum_{i \in V_j} \widetilde Y_{ij}$ for any $j\in J$. 
    Lemma \ref{lem:poissonization_trick} implies that $\widetilde Y_{ij}\sim \Poi(nP_{ij}/2)$ for all $i,j$, and the random variables $(\widetilde Y_{ij})_{i,j}$ are mutually independent. It follows that $\widetilde Z_j$ has a Poisson distribution: $\widetilde Z_j \sim \Poi(\sum_{i \in V_j} nP_{ij}/2)$, and the random variables $(\widetilde Z_j)_j$ are mutually independent. At the same time, for any $k\ge 1$ the conditional distribution of $\widetilde Z_j$ given $\widetilde n=k$ coincides with the distribution of $Z_j$ under $Y \sim \M(P,k)$. 

    It follows from \eqref{eq1:lem:poisson deviations} with $x=\lambda/2$ and $\lambda = n\lambda_j/2$ that
$$
\mathbb{P}\left(\widetilde Z_j\le \frac{n \lambda_j}{4}\right) \leq \exp\Big(-\frac{n \lambda_j}{4}(1-\log 2)\Big)\le \exp\Big(-\frac{7}{2}(1-\log 2)\alpha\log (N)\Big)< N^{-\alpha}.
$$
Hence,
\begin{align*}
  \mathbb{P}\left(\forall j \in J: \widetilde Z_j\ge \frac{n \lambda_j}{4}\right) \geq 1- \frac{|J|}{N^{\alpha}}. 
\end{align*}
    On the other hand,
    \begin{align}
        \mathbb{P}\left(\forall j \in J: \widetilde Z_j\ge \frac{n \lambda_j}{4}\right) 
        & \leq \mathbb P\left(\widetilde n > n\right) + 
    \mathbb{P}\left(\forall j \in J: \widetilde Z_j\ge \frac{n \lambda_j}{4}, \; \text{and }\widetilde n \leq n\right) \nonumber
    \\
    & = \mathbb P\left(\widetilde n > n\right) + \sum_{k\le n}
    \mathbb{P}\left(\forall j \in J: \widetilde Z_j\ge \frac{n \lambda_j}{4} \;\Big|\; \widetilde n = k\right)\mathbb P\left(\widetilde n =k\right)\nonumber
    \\
    &= 
    \mathbb P\left(\widetilde n > n\right) + \sum_{k\le n}
    \mathbb{P}_{Z \sim \M(P,k)}\left(\forall j \in J: Z_j\ge \frac{n \lambda_j}{4} \right)\mathbb P\left(\widetilde n =k\right)\nonumber
    \\
    &\le  
    \mathbb P\left(\widetilde n > n\right) + 
    \mathbb{P}_{Y \sim \M(P,n)}\left(\forall j \in J: Z_j\ge \frac{n \lambda_j}{4} \right),
    \label{eq:lem:_concent_bin-0}
    \end{align}
    where we have used the inequality $\mathbb{P}_{Y \sim \M(P,k)}\left(\forall j \in J: Z_j\ge \frac{n \lambda_j}{4} \right)\le \mathbb{P}_{Y \sim \M(P,n)}\left(\forall j \in J: Z_j\ge \frac{n \lambda_j}{4} \right)$ that holds for all $k\le n$ due to stochastic dominance since, under $Y \sim \M(P,k)$, each $Y_{ij}$ has a binomial distribution with parameters $(P_{ij},k)$.
    It follows that
    \begin{align*}
        \mathbb{P}_{Y \sim \M(P,n)}\left(\forall j \in J: Z_j\ge \frac{n \lambda_j}{4}\right) 
        &\geq 1- \frac{|J|}{N^\alpha} - \mathbb P\left(\widetilde n > n\right).
    \end{align*}
    Applying \eqref{eq2:lem:poisson deviations} with $\zeta=\widetilde n$ and $\lambda=x=n/2$ we get
    \begin{align*}
        \mathbb P (\widetilde n > n) &\leq \exp(n(1/2-\log 2))\le \exp(14 (1/2-\log 2)\alpha \log(N))\leq \frac{1}{N^\alpha}. 
    \end{align*} 
    Combining the last two displays yields the lemma.
\end{proof}

The following lemma, that may be of independent interest,  provides a tool for deducing lower bounds in probability from lower bounds in expectation. 
\begin{lemma}[Deducing lower bound in probability from lower bound in expectation]\label{lem:reduction of lower bound}
   Let $\mathcal{P}_0$ be a metric space with metric $v:\mathcal{P}_0\times \mathcal{P}_0\to \mathbb{R}_+$, and let $\mathcal{P}$ be a subset of $\mathcal{P}_0$ with the property that there exists $U\in \mathcal{P}_0$ such that
   \begin{equation}\label{eq:lem:reduction of lower bound1}
      v(U,P){\leq } s, \quad \forall P\in  \mathcal{P},
   \end{equation}
   where $s>0$.
   Let $\{\mathbb{P}_P, P\in \mathcal{P} \}$ be a family of probability measures indexed by $\mathcal{P}$. Assume that
   \begin{equation}\label{eq:lem:reduction of lower bound2}
      \inf_{\widetilde P}\sup_{P \in \mathcal{P}}\E_P\, v(\widetilde P(Y),P) \ge as,
   \end{equation}
   where $a>0$, $\E_P$ denotes the expectation with respect to random variable $Y$ distributed according to $\mathbb{P}_P$, and
   $\inf_{\widetilde P}$ is the infimum over all estimators $\widetilde P$ that take values in $\mathcal{P}_0$. Then
   \begin{equation*}\label{eq:lem:reduction of lower bound3}
      \inf_{\widetilde P}\sup_{p \in \mathcal{P}}\mathbb{P}_P \big(v(\widetilde P(Y),P) \ge as/2\big) \ge a/6.
   \end{equation*}
\end{lemma}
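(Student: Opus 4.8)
The plan is to argue by contradiction, turning a putative estimator that is accurate with probability close to $1$ into one whose \emph{expected} $v$-risk violates \eqref{eq:lem:reduction of lower bound2}. The one subtlety is that such an estimator may be very far from the truth on a small-probability event, so before taking expectations I would first ``truncate'' it to a ball of radius of order $s$ around the reference point $U$. Suppose, for contradiction, that $\inf_{\widetilde P}\sup_{P\in\mathcal{P}}\mathbb{P}_P(v(\widetilde P(Y),P)\ge as/2)<a/6$; then there exists an estimator $\widetilde P$ with values in $\mathcal{P}_0$ such that $\mathbb{P}_P(v(\widetilde P(Y),P)\ge as/2)<a/6$ for every $P\in\mathcal{P}$. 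First I would record the elementary fact that $a\le 1$: the constant estimator equal to $U$ has $\mathbb{E}_P\,v(U,P)\le s$ for all $P\in\mathcal{P}$ by \eqref{eq:lem:reduction of lower bound1}, so \eqref{eq:lem:reduction of lower bound2} forces $as\le s$. Then I define the truncated estimator by $\widehat P(Y)=\widetilde P(Y)$ on the event $\{v(\widetilde P(Y),U)\le 3s/2\}$ and $\widehat P(Y)=U$ on its complement; this is again an estimator taking values in $\mathcal{P}_0$, and its measurability is routine.

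Fix $P\in\mathcal{P}$ and set $A=\{v(\widetilde P(Y),P)<as/2\}$, so $A^c=\{v(\widetilde P(Y),P)\ge as/2\}$ has $\mathbb{P}_P(A^c)<a/6$. On $A$, the triangle inequality together with \eqref{eq:lem:reduction of lower bound1} and $a\le 1$ gives $v(\widetilde P(Y),U)\le v(\widetilde P(Y),P)+v(P,U)<as/2+s\le 3s/2$, hence $\widehat P(Y)=\widetilde P(Y)$ and therefore $v(\widehat P(Y),P)<as/2$ on $A$. On $A^c$ I would treat the two cases in the definition of $\widehat P$: either $v(\widetilde P(Y),U)\le 3s/2$, in which case $v(\widehat P(Y),P)\le v(\widetilde P(Y),U)+v(U,P)\le 3s/2+s=5s/2$, or $\widehat P(Y)=U$, in which case $v(\widehat P(Y),P)=v(U,P)\le s\le 5s/2$. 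Thus $v(\widehat P(Y),P)\le 5s/2$ holds always, and splitting the expectation over $A$ and $A^c$ yields
\begin{equation*}
\mathbb{E}_P\,v(\widehat P(Y),P)\le \frac{as}{2}\,\mathbb{P}_P(A)+\frac{5s}{2}\,\mathbb{P}_P(A^c)\le \frac{as}{2}+\frac{5s}{2}\cdot\frac{a}{6}=\frac{11as}{12}.
\end{equation*}
Since this bound is uniform over $P\in\mathcal{P}$ and $\tfrac{11as}{12}<as$ (because $as>0$), we obtain $\inf_{\widetilde P}\sup_{P\in\mathcal{P}}\mathbb{E}_P\,v(\widetilde P(Y),P)\le \tfrac{11as}{12}<as$, which contradicts \eqref{eq:lem:reduction of lower bound2}. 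Hence no such $\widetilde P$ exists, proving the lemma.

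\textbf{Main obstacle.} I do not expect a genuine difficulty here; the two points requiring care are (i) establishing $a\le 1$, which is exactly what guarantees that the truncation radius $3s/2$ is large enough to leave $\widetilde P$ unchanged on the ``good'' event $A$, and (ii) the bookkeeping of constants so that the resulting expectation stays strictly below $as$ — this is precisely what pins down the value $a/6$ in the statement (any truncation radius $\ge as/2+s$ and any probability threshold $<a/5$ would work, and $a/6$ is a convenient choice).
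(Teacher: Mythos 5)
Your proof is correct and follows essentially the same route as the paper's: both arguments truncate the estimator to a ball around the reference point $U$ so that the loss becomes uniformly bounded (the paper uses the threshold $v(\widetilde P,U)\le 2s$ giving a bound $3s$, you use $3s/2$ giving $5s/2$), and then relate the expectation to the tail probability at level $as/2$. The only cosmetic differences are that you phrase the argument as a contradiction and need the observation $a\le 1$ to ensure the truncation is inactive on the good event, whereas the paper proves directly that the truncation never increases the loss pointwise; both yield the constant $a/6$.
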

\begin{proof}
 Consider the set $\mathcal{P}'= \{P' \in \mathcal{P}_0: \ v(P',P)\le 3s, \ \forall P\in \mathcal{P} \}$. This set is not empty since it contains $U$. Note that it is sufficient to consider estimators taking values in $\mathcal{P}'$, that is, for any estimator $\widetilde P$ there exists an estimator $\bar P$ with values in $\mathcal{P}'$ such that 
 \begin{equation}\label{eq:lem:reduction of lower bound4}
      v(\bar P,P)\le v(\widetilde P,P), \quad \forall P\in  \mathcal{P}.
   \end{equation}
   In fact, let $\widetilde P$ be any estimator. Define another estimator 
 \begin{align*}
    \bar P =  \begin{cases} 
    \widetilde P \ \  \text{if} \ \ v(\widetilde P,U) \le 2s,\\
    U \ \ \text{if} \ \ v(\widetilde P,U) > 2s.
    \end{cases} 
\end{align*}
Let us check that this estimator $\bar P$ satisfies \eqref{eq:lem:reduction of lower bound4} and $\bar P\in \mathcal{P}'$. Indeed,
if $v(\widetilde P,U) \le 2s$ then \eqref{eq:lem:reduction of lower bound4} obviously holds, and we have $v(\bar P,P)\le v(\widetilde P,U)+v(P,U)\le 3s$ for all $P\in  \mathcal{P}$. Otherwise, if $v(\widetilde P,U) > 2s$ then $v(\bar P,P)=v(U,P) \leq 2s-s < v(\widetilde P,U)-v(U,P)\le v(\widetilde P,P)$ for all $P\in  \mathcal{P}$.

It follows that 
\begin{align}\nonumber
  \inf_{\widetilde P}\sup_{p \in \mathcal{P}}\E_P\, v(\widetilde P,P)  &= \inf_{\widetilde P\in \mathcal{P}'}\sup_{p \in \mathcal{P}}\E_P\, v(\widetilde P,P),
  \\
  \label{eq:lem:reduction of lower bound5}
  \inf_{\widetilde P}\sup_{p \in \mathcal{P}}\mathbb{P}_P \big(v(\widetilde P,P) \ge as/2\big)&=\inf_{\widetilde P\in \mathcal{P}'}\sup_{p \in \mathcal{P}}\mathbb{P}_P \big(v(\widetilde P,P) \ge as/2\big).
\end{align}
Thus, we have
\begin{align*}
    as &\leq \inf_{\widetilde P\in \mathcal{P}'}\sup_{p \in \mathcal{P}} \E_P \,v(\widetilde P,P) \\
    & = \inf_{\widetilde P\in \mathcal{P}'}\sup_{p \in \mathcal{P}}\left\{~ \E_P\left[v(\widetilde P,P) \; \mathbbm{1}_{\left\{v(\widetilde P,P) \geq as/2\right\}}\right] ~+ ~ \E_P\left[v(\widetilde P,P)\; \mathbbm{1}_{\left\{v(\widetilde P,P) < as/2\right\}}\right]~\right\}\\
    & \leq 3s \inf_{\widetilde P\in \mathcal{P}'}\sup_{p \in \mathcal{P}}\mathbb{P}_P \big(v(\widetilde P,P) \ge as/2\big) + \, as/2,
\end{align*}
which together with \eqref{eq:lem:reduction of lower bound5}  implies the lemma.
\end{proof}

\begin{lemma}\label{prop:samp_comp_TV_discrete}
Let $Z_1,\dots,Z_m$ be iid random variables on a measurable space $(\mathcal Z,\mathcal U)$. Let $\widehat p_j=\frac{1}{m}\sum_{i=1}^m\mathbbm{1}_{Z_i \in A_j}$ and $p_j=\mathbb P (Z_1 \in A_j)$, where $A_j$, $j=1,\dots, \ell$, are disjoint subsets of $\mathcal X$.  
Then, for any $\delta\in (0,1)$,
\begin{align*}
    &\mathbb P\left(\sum_{j=1}^\ell |p_j - \widehat p_j| > \sqrt{\frac{\ell}{m}} + \sqrt{\frac{2\log(1/\delta)}{m}} \right) \le \delta.
\end{align*}
\end{lemma}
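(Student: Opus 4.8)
The plan is to recognize that $\sum_{j=1}^\ell |p_j-\widehat p_j|$ is, up to a factor, the total variation distance between the empirical distribution of the cell-membership labels and their true distribution, and to control it by the classical route: a bounded-differences concentration inequality around the mean, plus a direct bound on the mean. Concretely, set $g(Z_1,\dots,Z_m):=\sum_{j=1}^\ell |p_j-\widehat p_j|$, viewed as a function of the iid sample.

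First I would verify the bounded-differences property of $g$. If a single coordinate $Z_i$ is replaced by an arbitrary $Z_i'$, then at most two of the empirical frequencies $\widehat p_j$ are affected (the cell that contained $Z_i$ and the cell that contains $Z_i'$, if any), and each of them changes by exactly $1/m$; since the disjointness of the $A_j$ guarantees no further interaction, $g$ changes by at most $2/m$. The fact that the $A_j$ need not cover $\mathcal Z$ is harmless here, since an observation lying in none of them simply contributes to none of the $\widehat p_j$. McDiarmid's bounded-differences inequality then yields, for every $t>0$,
\[
\mathbb{P}\big(g-\mathbb{E}g\ge t\big)\le \exp\!\Big(-\frac{2t^2}{m(2/m)^2}\Big)=\exp\!\Big(-\frac{mt^2}{2}\Big),
\]
so taking $t=\sqrt{2\log(1/\delta)/m}$ gives $\mathbb{P}\big(g\ge \mathbb{E}g+\sqrt{2\log(1/\delta)/m}\big)\le\delta$.

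It remains to show $\mathbb{E}g\le\sqrt{\ell/m}$. Applying Jensen's inequality termwise and using that $m\widehat p_j\sim\Bin(m,p_j)$ gives $\mathbb{E}|p_j-\widehat p_j|\le\sqrt{\mathrm{Var}(\widehat p_j)}=\sqrt{p_j(1-p_j)/m}\le\sqrt{p_j/m}$. Summing over $j$ and invoking the Cauchy--Schwarz inequality together with $\sum_{j=1}^\ell p_j\le 1$ (again disjointness of the $A_j$), one obtains
\[
\mathbb{E}g\le\frac{1}{\sqrt m}\sum_{j=1}^\ell\sqrt{p_j}\le\frac{1}{\sqrt m}\sqrt{\ell\sum_{j=1}^\ell p_j}\le\sqrt{\frac{\ell}{m}}.
\]
Combining this with the concentration bound completes the proof. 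There is no genuine obstacle here: the whole argument is routine, and the only step meriting a line of care is the verification that the bounded-differences constant is $2/m$ rather than, say, $1/m$ or something larger.
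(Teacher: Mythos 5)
Your proof is correct and follows essentially the same route as the paper's: bound $\mathbb{E}\sum_j|p_j-\widehat p_j|$ by $\sqrt{\ell/m}$ via Jensen/Cauchy--Schwarz on the binomial variances, then apply the bounded-differences inequality. The one point of divergence is that you (correctly) take the bounded-differences constant to be $2/m$, whereas the paper asserts $1/m$; your more conservative constant still yields exactly the stated deviation bound with $t=\sqrt{2\log(1/\delta)/m}$, so nothing is lost.
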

\begin{proof} Set $G(Z_1,\dots, Z_m):=\sum_{j=1}^\ell |p_j - \widehat p_j|$. We have 
 $$
 \mathbb E G(Z_1,\dots, Z_m) = \mathbb E \sum_{j=1}^\ell |p_j - \widehat p_j| \le \sum_{j=1}^\ell \left(\mathbb E [|p_j - \widehat p_j|^2]\right)^{1/2}
 \le \sum_{j=1}^\ell  \sqrt{\frac{p_j}{m}} \le \sqrt{\frac{\ell}{m}}.
 $$  
 Note that  $G(Z_1,\dots, Z_m)$ changes its value by at most $1/m$ if we replace any single $Z_i$ by another $Z_i'$. Therefore, by the bounded difference inequality (see, e.g.,~\cite[Theorem 2.2]{DevroyeLugosi}),
 \begin{align*}
    &\mathbb P\left(G(Z_1,\dots, Z_m) > \mathbb E G(Z_1,\dots, Z_m) + t\right)\le e^{-2t^2m}, \quad \forall t>0, 
\end{align*}
which yields the result.
\end{proof}
\begin{lemma}[Control of multinomial noise]\label{prop:mult_noise_2}
    Let $\alpha>1$, $N>1$, $n \in \N^*$,  let $P \in \R_+^{d_1\times d_2}$ be a matrix such that $\sum_{i,j} P_{ij} = 1$, and  let $Y \sim \M(P,n)$. 
   Consider an  extraction $Q$ of $P$ corresponding to two sets of indices $I \subseteq [d_1]$ and  $J \subseteq [d_2]$, that is $Q = (P_{ij})_{i\in I, j \in J}$.
    Let 
    $W_Q = (W_{ij})_{i\in I, j \in J}$, where $W = \frac{Y}{n} - P$ is the multinomial noise.
    Then 
    \begin{align*}
        \mathbb{P}\left(\|W_Q\|^2 \leq 9 \max\Big\{\frac{\alpha\normsquare{Q} \log N}{n},\, \Big(\frac{\alpha\log N}{n}\Big)^2\Big\}\right)\geq 1 - \frac{|I|+|J|}{N^\alpha}.
    \end{align*}
\end{lemma}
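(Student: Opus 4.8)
The plan is to realize $W_Q$ as a normalized sum of $n$ i.i.d.\ centered matrices and invoke the rectangular matrix Bernstein inequality, the crux being a sharp bound on the matrix variance statistic. Let $X_1,\dots,X_n$ be the i.i.d.\ draws from $P$ over $[d_1]\times[d_2]$ realizing $Y\sim\M(P,n)$, and let $E_k\in\{0,1\}^{d_1\times d_2}$ be the one-hot matrix supported at $X_k$, so that $Y=\sum_{k=1}^n E_k$ and $W=\frac1n\sum_{k=1}^n(E_k-P)$. Writing $R_I$ and $R_J$ for the coordinate-restriction maps onto $I$ and $J$ (hence $R_IPR_J^\top=Q$), we have $W_Q=\frac1n\sum_{k=1}^n\Xi_k$ with $\Xi_k:=R_I(E_k-P)R_J^\top$; these are i.i.d., centered, and a.s.\ bounded, $\|\Xi_k\|\le\|R_IE_kR_J^\top\|+\|Q\|\le 1+\|Q\|_F\le 2$, since a one-hot matrix has operator norm $1$ and $\|Q\|_F^2\le(\max_{ij}P_{ij})\sum_{ij}P_{ij}\le 1$.

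The main step will be to control $\sigma^2:=\max\big\{\,\big\|\sum_{k}\E[\Xi_k\Xi_k^\top]\big\|,\ \big\|\sum_{k}\E[\Xi_k^\top\Xi_k]\big\|\,\big\}$. Using $R_J^\top R_J=\operatorname{diag}(\mathbbm{1}_{j\in J})_{j\in[d_2]}$ together with the fact that a one-hot matrix $E$ supported at $(a,b)$ satisfies $E\operatorname{diag}(\mathbbm{1}_{j\in J})E^\top=\mathbbm{1}_{\{b\in J\}}\,\mathbf e_a\mathbf e_a^\top$ (with $\mathbf e_a$ the $a$-th basis vector of $\R^{d_1}$), a direct expansion gives
\[
\E\!\left[\Xi_k\Xi_k^\top\right]=R_I\Big(\operatorname{diag}\big(\textstyle\sum_{j\in J}P_{ij}\big)_{i\in[d_1]}-PR_J^\top R_JP^\top\Big)R_I^\top\ \preceq\ \operatorname{diag}\big(\textstyle\sum_{j\in J}P_{ij}\big)_{i\in I},
\]
since $PR_J^\top R_JP^\top\succeq0$. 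Hence $\big\|\sum_k\E[\Xi_k\Xi_k^\top]\big\|=n\big\|\E[\Xi_1\Xi_1^\top]\big\|\le n\max_{i\in I}\sum_{j\in J}P_{ij}\le n\normsquare{Q}$, and symmetrically $\big\|\sum_k\E[\Xi_k^\top\Xi_k]\big\|\le n\normsquare{Q}$, so $\sigma^2\le n\normsquare{Q}$. Applying matrix Bernstein via the Hermitian dilation of $\sum_k\Xi_k$ (which produces the $|I|+|J|$ prefactor) then yields, for every $s>0$,
\[
\mathbb P\big(\|W_Q\|\ge s\big)=\mathbb P\Big(\big\|\textstyle\sum_k\Xi_k\big\|\ge ns\Big)\le(|I|+|J|)\exp\!\Big(-\frac{ns^2/2}{\normsquare{Q}+2s/3}\Big).
\]

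It then remains to take $s=3\max\big\{\sqrt{\alpha\normsquare{Q}\log N/n},\ \alpha\log N/n\big\}$, so that $s^2=9\max\{\alpha\normsquare{Q}\log N/n,(\alpha\log N/n)^2\}$ is exactly the target quantity, and to verify $ns^2\ge3\alpha\log N\,(\normsquare{Q}+2s/3)$, which makes the exponent at most $-\tfrac32\alpha\log N\le-\alpha\log N$ for $N>1$. This splits into two cases: if $\normsquare{Q}\ge\alpha\log N/n$ then $s=3\sqrt{\alpha\normsquare{Q}\log N/n}\le3\normsquare{Q}$, so $ns^2=9\alpha\normsquare{Q}\log N\ge3\alpha\log N(\normsquare{Q}+2\normsquare{Q})\ge3\alpha\log N(\normsquare{Q}+2s/3)$; if $\normsquare{Q}<\alpha\log N/n$ then $s=3\alpha\log N/n$, so $ns^2=9\alpha^2(\log N)^2/n\ge3\alpha\log N\cdot3\alpha\log N/n>3\alpha\log N(\normsquare{Q}+2s/3)$. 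In both cases $\mathbb P(\|W_Q\|\ge s)\le(|I|+|J|)N^{-\alpha}$, and since $\{\|W_Q\|<s\}\subseteq\{\|W_Q\|^2\le9\max\{\cdots\}\}$ this gives the claimed bound.

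The step I expect to require the most care is the PSD domination $\E[\Xi_k\Xi_k^\top]\preceq\operatorname{diag}(\sum_{j\in J}P_{ij})_{i\in I}$: it is exactly this that makes the variance proxy scale with $\normsquare{Q}$ rather than with a cruder quantity, so that the resulting rate is $\sqrt{\normsquare{Q}\log N/n}$; everything downstream is bookkeeping to land the constant $9$. An alternative route using only tools already in the paper would be to Poissonize via Lemma~\ref{lem:poissonization_trick}, turning $nW_Q$ into a matrix with independent centered Poisson-type entries, bound its operator norm by a Bernstein inequality for independent-entry random matrices, and undo the Poissonization with the deviation bounds of Lemma~\ref{lem:poisson deviations}; this yields the same conclusion up to the values of the absolute constants, which is immaterial here.
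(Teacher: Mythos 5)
Your proposal is correct and follows essentially the same route as the paper: the paper also writes $nW_Q$ as a sum of $n$ iid centered extractions of one-hot matrices, applies the rectangular matrix Bernstein inequality with the same bounds $K\le 2$ and $\sigma^2\le n\normsquare{Q}$ (via the PSD domination of the second-moment matrices by the diagonal of row/column sums), and concludes with the identical choice of threshold and two-case verification of the exponent.
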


\begin{proof}
      We use matrix Bernstein inequality (see, for example, \cite[Exercise 5.4.15]{vershynin2018high}), which yields that 
for any independent zero mean matrices $M_1,\dots, M_n \in \R^{|I| \times |J|}$ such that, almost surely, $\|M_i\| \leq K, \forall i\in [n]$, we have 
\begin{align}\label{eq:matrixBernstein}
    \mathbb P\Bigg(\bigg\|\sum_{i=1}^n M_i\bigg\|> t\Bigg) \leq (|I|+|J|)\exp\left(-\frac{t^2/2}{\sigma^2  + Kt/3}\right), \quad \forall t>0,
\end{align}
where 
\begin{align*}
    \sigma^2 = \max\Bigg(\bigg\|\sum_{i=1}^n\mathbb E (M_iM_i^\top)\bigg\|, \bigg\|\sum_{i=1}^n \mathbb E (M_i^\top M_i)\bigg\|\Bigg).
\end{align*}
 We apply \eqref{eq:matrixBernstein} with $M_i = \big(X_i - P\big)_{I,J}$, where $X_i$'s are independent random matrices with distribution $\mathcal{M}(P,1)$. 
In this case, the inequality $\|M_i\| \leq K$ holds with $K=2$. To prove this, notice first that if $Q$ is an extraction of a probability matrix $P$, then $\|Q\|^2 \leq \normsquare{Q}$. 
Indeed, letting $p_i = \sum_{j\in J} P_{ij}, ~\forall i \in I$, and $p_{\max} = \max_{i\in I} p_i$, we obtain 
\begin{align*}
    \|Q\|^2 \leq \|Q\|_F^2 = \sum_{i\in I,j \in J} P_{ij}^2 \leq \sum_{i\in I} p_i^2 \leq p_{\max}\sum_{i\in I} p_i 
 \le p_{\max} \leq \normsquare{Q}.\label{eq:control_op_normsquare} 
\end{align*}
It follows that almost surely for all $i\in[n]$ we have
\begin{align*}
    \|M_i\| \leq \|(X_i)_{IJ}\| + \|Q\| \leq 1 + \sqrt{\normsquare{P}} \leq 2.
\end{align*}
Next,
\begin{align*}
    \left\|\sum_{i=1}^n\mathbb E( M_iM_i^\top)\right\| &= \left\|\sum_{i=1}^n\mathbb E [(X_i)_{IJ} (X_i)_{IJ}^\top] - Q Q^\top\right\| = \left\|n\operatorname{Diag}(p_k)_{k\in I} - QQ^\top\right\|\\
    & \leq n p_{\max}  \leq n\normsquare{Q},
\end{align*}
and controlling $\left\|\sum_{i=1}^n\mathbb E (M_i^\top M_i)\right\|$ analogously yields 
\begin{align*}
    \sigma^2 \leq n \normsquare{Q}.
\end{align*}
Now, we use the fact that $nW_Q$ has the same distribution as $\sum_{i=1}^n M_i$. Therefore, applying inequality \eqref{eq:matrixBernstein} and the above bounds on $\sigma^2$ and $K$ we obtain that, for all $t>0$,
\begin{align*}
     \mathbb P\left(\|nW_Q\|> t\right) &\leq (|I|+|J|)\exp\left(-\frac{t^2/2}{n \normsquare{Q}  + 2t/3}\right).
\end{align*}
Now, set
\begin{align*}
    t = 3\max\Big(\sqrt{\alpha n\normsquare{Q} \log(N)}, \ \alpha  \log(N)\Big).
\end{align*}
If $n\normsquare{Q} \geq \alpha \log(N)$ then $t=3\sqrt{\alpha n\normsquare{Q} \log(N)}\le 3n \normsquare{Q}$ and
\begin{align*}
     \mathbb P\left(\|nW_Q\|> t\right) 
     &\le (|I|+|J|)\exp\left(-\frac{t^2/2}{3 n \normsquare{Q}}\right)
      \leq (|I|+|J|)\exp\left(-\frac{3}{2}\alpha\log(N)\right)
      \leq (|I|+|J|) N^{-\alpha}.
\end{align*}
Otherwise, if $n\normsquare{Q} < \alpha \log(N)$ then $t=3\alpha\log(N)$ and the same bound holds:
\begin{align*}
     \mathbb P\left(\|nW_Q\|> t\right) 
     & \leq (|I|+|J|)\exp\left(-\frac{t^2/2}{3\alpha \log(N)}\right)
      \leq (|I|+|J|) N^{-\alpha}.
\end{align*}
\end{proof}

\vspace{3mm}

\textbf{Acknowledgment}: The work of Olga Klopp was funded by CY Initiative (grant ``Investissements d'Avenir'' ANR-16-IDEX-0008) and 
Labex MME-DII (ANR11-LBX-0023-01). The work of A.B. Tsybakov was supported by the grant of French National Research Agency (ANR) ``Investissements d'Avenir" LabEx Ecodec/ANR-11-LABX-0047. 
The authors are grateful to Laurène David, Gaëtan Brison, and Shreshtha Shaurya from Hi!PARIS for their help in implementing the algorithms of this paper.

\bibliographystyle{alpha}
\newcommand{\etalchar}[1]{$^{#1}$}


\end{document}